%%  revised Dec. 18, 2013

\documentclass[12pt,reqno]{amsart}
\usepackage{amsmath, amsthm, amssymb, amsfonts}

\topmargin 1cm
\advance \topmargin by -\headheight
\advance \topmargin by -\headsep
     
\setlength{\paperheight}{270mm}%
\setlength{\paperwidth}{192mm}%
\textheight 22.5cm
\oddsidemargin 1cm
\evensidemargin \oddsidemargin
\marginparwidth 1.25cm
\textwidth 14cm
\setlength{\parskip}{0.05cm}

\newtheorem{theorem}{Theorem}[section]
\newtheorem{lemma}[theorem]{Lemma}
\newtheorem{corollary}[theorem]{Corollary}

\theoremstyle{definition}

\theoremstyle{remark}

\numberwithin{equation}{section}

\newcommand{\mmod}[1]{\,\,(\text{mod}\,\,#1)}

  \def\bff{{\mathbf f}}

\def\bfm{{\mathbf m}}
\def\bfn{{\mathbf n}}

\def\bfu{{\mathbf u}}
\def\bfv{{\mathbf v}}
\def\bfw{{\mathbf w}}
\def\bfx{{\mathbf x}}
\def\bfy{{\mathbf y}}
\def\bfz{{\mathbf z}}

\def\calB{{\mathcal B}} 
\def\calC{{\mathcal C}} 

\def\calD{{\mathcal D}}

 \def\Ktil{{\widetilde K}}

\def\calN{{\mathcal N}}

\def\calP{{\mathcal P}}
\def\calQ{{\mathcal Q}}
\def\calR{{\mathcal R}}

\def\Gtil{\widetilde G}\def\Itil{\widetilde I}\def\Ktil{\widetilde K}

\def\dbC{{\mathbb C}}\def\dbN{{\mathbb N}}
\def\dbR{{\mathbb R}}
\def\dbZ{{\mathbb Z}}

\def\grf{{\mathfrak f}}\def\grF{{\mathfrak F}}
\def\grG{{\mathfrak G}}
\def\grH{{\mathfrak H}} \def\grI{{\mathfrak I}}

\def\grm{{\mathfrak m}}
\def\grS{{\mathfrak S}}

\def\alp{{\alpha}} \def\bfalp{{\boldsymbol \alpha}}
\def\bet{{\beta}}  
\def\gam{{\gamma}} \def\Gam{{\Gamma}}
\def\del{{\delta}} \def\Del{{\Delta}}
\def\zet{{\zeta}} \def\bfzet{{\boldsymbol \zeta}} 
\def\bfeta{{\boldsymbol \eta}} 
\def\tet{{\theta}} \def\bftet{{\boldsymbol \theta}} \def\Tet{{\Theta}}
\def\kap{{\kappa}}
\def\lam{{\lambda}}  

\def\bfxi{{\boldsymbol \xi}}

\def\sig{{\sigma}}  

\def\Ups{{\Upsilon}} 
\def\bfpsi{{\boldsymbol \psi}}
\def\ome{{\omega}} \def\Ome{{\Omega}}
\def\d{{\partial}}
\def\eps{\varepsilon}

\def\le{\leqslant} \def\ge{\geqslant}

\def\d{{\, d}}

\def\llbracket{\lbrack\;\!\!\lbrack} \def\rrbracket{\rbrack\;\!\!\rbrack}

%%%%%%%%%%%%%%%%%%%%%%%%%%%%%%%%%%%%%%%%%%%%%%%%%%%%%%%%%%%%%%%%%%%%%%
%%%  KF macros
%%%%%%%%%%%%%%%%%%%%%%%%%%%%%%%%

%% fixes a serious LaTeX error in displaying \pmod, especially in sums
\makeatletter
\renewcommand{\pmod}[1]{\allowbreak\mkern7mu({\operator@font mod}\,\,#1)}
\makeatother
%% aligned display, no label
\newcommand{\bal}{\[\begin{aligned}}
\newcommand{\eal}{\end{aligned}\]}
%% single line display, with or without a label
\newcommand{\be}{\begin{equation}}
\newcommand{\ee}{\end{equation}}
\newcommand{\ssum}[1]{\sum_{\substack{#1}}}  %%% stacked sum

%\newcommand{\cl}[1]{{\ensuremath{\left\lceil #1 \right\rceil}}}
      %% order of magnitude
\renewcommand{\(}{\left(}
\renewcommand{\)}{\right)}

  %%% frac with paren
%% line spacing

%%%%%%% curly font

%
\usepackage{xspace}
\newcommand{\RH}{H\"older's inequality\xspace}  %% Holder's inequality
\newcommand{\er}{{\rm e}} 

%%%%%%%%%%%%%%%%%%%%%%%%%%%%%%%%%%%%%%%%%%%%%%%%%%%%%%%%%%%%%%%%%%%%%%%%%%%%%%%%%%%%%%%%%%%%%%%%%%%%%%%%%%%%%
\begin{document}
\title[Vinogradov's mean value theorem]{On Vinogradov's mean value theorem:\\ strongly diagonal
 behaviour\\ via efficient congruencing}
\author[Kevin Ford]{Kevin Ford$^\dag$}
\address{Department of Mathematics, University of Illinois at Urbana-Champaign, 1409 West Green St.,
 Urbana, IL 61801, USA}
\email{ford@math.uiuc.edu}
\author[Trevor D. Wooley]{Trevor D. Wooley$^*$}
\address{School of Mathematics, University of Bristol, University Walk, Clifton, Bristol BS8 1TW, United
 Kingdom}
\email{matdw@bristol.ac.uk}
\thanks{$^\dag$Supported in part by National Science Foundation grants DMS-0901339 and
 DMS-1201442.}
\thanks{$^*$Supported in part by a Royal Society Wolfson Research Merit Award.}
\subjclass[2010]{11L15, 11L07, 11P05, 11P55}
\keywords{Exponential sums, Waring's problem, Hardy-Littlewood method}
\date{18 December 2013}

\begin{abstract} We enhance the efficient congruencing method for estimating Vinogradov's integral for
 moments of order $2s$, with $1\le s\le k^2-1$. In this way, we prove the main conjecture for such even
 moments when $1\le s\le \tfrac{1}{4}(k+1)^2$, showing that the moments exhibit strongly diagonal
 behaviour in this range. There are improvements also for larger values of $s$, these finding application to
 the asymptotic formula in Waring's problem.
\end{abstract}
\maketitle

\section{Introduction} Considerable progress has recently been achieved in the theory of Vinogradov's
 mean value theorem (see \cite{Woo2012a}, \cite{Woo2013}), associated estimates finding
 application throughout analytic number theory, in Waring's problem and the theory of the Riemann zeta
 function, to name but two. The vehicle for these advances is the so-called ``efficient congruencing''
 method, the most striking consequence of which is that the main conjecture in Vinogradov's mean value
 theorem holds with a number of variables only twice the number conjectured to be best possible (see
 \cite[Theorem 1.1]{Woo2012a}). Our goal in the present paper is to establish the main conjecture in the
 complementary variable regime, showing that diagonal behaviour dominates for half of the range
 conjectured. In common with the previous work cited, this work far exceeds in this direction the
 conclusions available hitherto for any Diophantine system of large degree $k$.\par

When $k$ and $s$ are natural numbers, denote by $J_{s,k}(X)$ the number of integral solutions of the
 system of Diophantine equations
\be\label{1.1}
\sum_{i=1}^s (x_i^j-y_i^j)=0 \quad (1\le j\le k),
\ee
with $1\le x_i,y_i\le X$ $(1\le i\le s)$. The lower bound
\be\label{1.2}
 J_{s,k}(X) \gg X^s + X^{2s-\frac12 k(k+1)},
\ee
arises by considering the diagonal solutions of the system (\ref{1.1}) with $x_i=y_i$ $(1\le i\le s)$,
 together with a lower bound for the product of local densities (see \cite[equation (7.5)]{Vau1997}).
 Motivated by the latter considerations, the {\it main conjecture} in Vinogradov's mean value theorem
 asserts that for each $\eps>0$, one has\footnote{Throughout this paper, the implicit constant in
 Vinogradov's notation $\ll$ and $\gg$ may depend on $s$, $k$ and $\eps$.}
\be\label{1.3}
J_{s,k}(X) \ll X^\eps (X^s + X^{2s-\frac12 k(k+1)}).
\ee
In \S7 of this paper, we prove the main conjecture \eqref{1.3} for $1\le s\le \frac14 (k+1)^2$.

\begin{theorem}\label{theorem1.1}
 Suppose that $k\ge 4$ and $1\le s\le \frac14 (k+1)^2$. Then for each $\eps>0$, one has
\begin{equation}\label{1.4}
J_{s,k}(X) \ll X^{s+\eps}.
\end{equation}
\end{theorem}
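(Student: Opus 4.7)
The proof proceeds via the \emph{efficient congruencing} method of \cite{Woo2012a}, in a form refined to capture the pure diagonal behaviour $J_{s,k}(X)\ll X^{s+\eps}$ rather than $X^{s+\del+\eps}$ for some residual $\del>0$. Define
\[
\lam_{s,k}^*=\inf\{\lam\in\dbR : J_{s,k}(X)\ll X^{\lam+\eps}\text{ for every }\eps>0\},
\]
and set $\del=\lam_{s,k}^*-s$. The lower bound (\ref{1.2}) yields $\del\ge 0$, and the theorem is equivalent to the assertion that $\del=0$ throughout the range $1\le s\le\tfrac14(k+1)^2$. The plan is to assume $\del>0$ and, via a suitably calibrated iteration of the congruencing step, derive an inequality of the form $\del\le c\,\del$ with $c<1$, which forces $\del=0$.

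The technical apparatus introduces auxiliary mean values $I_{a,b}(X;\xi,\eta)$ and $K_{a,b}(X;\xi,\eta)$ counting weighted solutions of the system (\ref{1.1}) in which two blocks of $s$ variables each are restricted to prescribed residue classes modulo $p^a$ and modulo $p^b$, with $p$ an auxiliary prime of size $p\asymp X^\tet$ and $\tet\in(0,1/k)$ a parameter to be optimized. A short unfolding of the definitions yields the opening relation
\[
J_{s,k}(X)\ll p^{2s-1}\max_{\xi,\eta}I_{1,1}(X;\xi,\eta),
\]
and the key \emph{congruencing step} combines orthogonality modulo $p^b$ with H\"older's inequality to produce an estimate of the shape
\[
I_{a,b}(X;\xi,\eta)\ll p^{\Lam(a,b)}\Bigl(\max_{\xi',\eta'}K_{b,a'}(X;\xi',\eta')\Bigr)^{1/s}J_{s,k}(X)^{1-1/s},
\]
with $a'>b>a$ and an explicit exponent $\Lam(a,b)$. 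A second application of H\"older converts the $K$-mean back into an $I$-mean at level $(b,a')$, so that the two operations together constitute one complete iteration.

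Iterating this procedure along a geometric chain of exponent pairs $(a_n,b_n)$ for $n=1,2,\ldots,N$, feeding in the opening relation and tracking the accumulated powers of $X$ and $p$, yields after $N$ steps an inequality whose unravelling gives, upon optimizing $\tet$ and letting $N\to\infty$, a bound of the form
\[
\lam_{s,k}^*\le s+\Psi(s,k)\,\del,
\]
with $\Psi(s,k)<1$ precisely when $s\le\tfrac14(k+1)^2$. Comparison with the definition of $\del$ then forces $\del=0$ and delivers (\ref{1.4}).

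The main obstacle is the calibration of the H\"older exponents and of the ratio $a'/b$ used in the congruencing chain, so that the arithmetic savings $p^{\Lam(a,b)}$ extracted at each step dominate the weight $1-1/s$ that H\"older attaches to the ambient factor $J_{s,k}(X)$. The threshold $s=\tfrac14(k+1)^2$ emerges precisely as the value at which the resulting contraction factor $\Psi(s,k)$ ceases to be strictly less than $1$; beyond that range the iteration no longer contracts $\del$, and a quite different argument would be needed. Much of the technical work consists in verifying uniformly along the chain that the requisite inequalities hold, and in arranging the H\"older parameters so that the per-step savings accumulate rather than dissipate across the $N$ iterations.
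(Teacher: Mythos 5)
Your proposal correctly identifies the overall vehicle (efficient congruencing, mean values $I_{a,b}$ and $K_{a,b}$ over residue classes modulo powers of an auxiliary prime $p\asymp X^\tet$, iteration plus H\"older, and a conclusion drawn by letting the iteration depth $N\to\infty$), but it omits the single idea that actually makes the diagonal bound $J_{s,k}(X)\ll X^{s+\eps}$ attainable, and without it the iteration you describe cannot contract to the diagonal exponent.

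The paper's proof of Theorem \ref{theorem1.1} passes through Theorem \ref{theorem1.3}, whose engine is Lemma \ref{lemma3.3}: a bound of the shape $B^{r,t}_{a,b}(p)\le k!\,p^{\mu b+\nu a}$ for the number of $\calR(tb)$-equivalence classes of solutions of the congruence system \eqref{3.1}, where the two free parameters $r$ (the number of well-spaced variables in a block) and $t$ (the number of $r$-blocks on the other side) satisfy \eqref{1.6}, and where $\mu,\nu$ are given by \eqref{3.4}. The decisive observation is that when $r+t=k$ one has $\mu=\nu=0$, so the congruence count is $O(1)$ with \emph{no} $p$-power loss, reflecting the fact that after discarding the $k-r$ lowest-modulus congruences no further lifting is required and only essentially diagonal solutions survive. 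It is precisely this that forces $\kap=r(t+1)$ in \eqref{1.7} when $r+t=k$, which upon choosing $r=t=k/2$ (or $r=\tfrac12(k+1),t=\tfrac12(k-1)$) gives the exponent $s=r(t+1)\approx\tfrac14(k+1)^2$ in the main conjecture range. Your sketch has no analogue of this: it keeps a single block size, applies H\"older with exponents $1/s$ and $1-1/s$ (the paper instead uses $1/t$ and $1-1/t$ in Lemma \ref{lemma5.1}, reflecting the $2t$ copies of the $r$-tuple sums $\grF_b$), and asserts without justification that some unspecified contraction factor $\Psi(s,k)$ is below $1$ exactly for $s\le\tfrac14(k+1)^2$. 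The standard efficient congruencing step of \cite{Woo2012a} (which your outline most closely resembles, with $r=t=k$) cannot produce this threshold; run as written, your iteration would reproduce the range $s\ge k^2+k$ of that paper, not the diagonal range. The gap, concretely, is the absence of the parametric congruence lemma (Lemma \ref{lemma3.3}) with the tuned choice $r+t=k$, together with the corresponding two-parameter block structure in the mean values \eqref{2.7}--\eqref{2.8}, which is what converts the per-step saving from ``almost enough'' into ``exactly enough'' to reach the diagonal.

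A secondary inaccuracy: your opening relation $J_{s,k}(X)\ll p^{2s-1}\max I_{1,1}$ does not match the paper's pre-congruencing step. Lemma \ref{lemma6.1} requires a careful choice of the prime $p$ (so that a positive proportion of solutions have all $x_i$ and all $y_i$ distinct modulo $p$) and then conditions the first block via the sums $\grF(\bfalp;\bfzet)$, landing at $K_{0,1+h}(X)$ for some small $h$, not at $I_{1,1}$. This is not merely cosmetic: the conditioning is what guarantees the nonsingularity hypothesis of Lemma \ref{lemma3.1} (Hensel's lemma) at every later stage.
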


In the range $1\le s\le k$, the upper bound $J_{s,k}(X)\ll X^s$ follows directly from the
Vi\'ete-Girard-Newton formulae
 concerning the roots of polynomials. Hitherto, the only other case in which the bound (\ref{1.4}) had
 been established was that in which $s=k+1$ (see \cite[Lemma 5.4]{Hua1965}, and \cite{VW1997} for a
 sharper variant). The extension of the range $1\le s\le k+1$, in which the bound (\ref{1.4}) is known to
 hold, to $1\le s\le \tfrac{1}{4}(k+1)^2$ covers half of the total range predicted by the main conjecture.
 Previous approximations to strongly diagonal behaviour in the range $1\le s\le \tfrac{1}{4}(k+1)^2$
 were considerably weaker. The second author established that when $s\le k^{3/2}(\log k)^{-1}$, one has
 the bound
$$J_{s,k}(X)\ll X^{s+\nu_{s,k}+\eps},$$
with $\nu_{s,k}=\exp(-Ak^3/s^2)$, for a certain positive constant $A$ (see \cite{Woo1994}), and with
 $\nu_{s,k}=4s/k^2$ in the longer range $s\le \tfrac{1}{4}(k+1)^2$ (see \cite{Woo2013}). Both results
 improve on earlier work of Arkhipov and Karatsuba \cite{AK1978} and Tyrina \cite{Tyr1987}, these
 authors offering substantially sharper bounds than the classical work of Vinogradov \cite{Vin1947} for
 smaller values of $s$.\par

We also improve upon bounds for $J_{s,k}(X)$ given in \cite{Woo2012a} and \cite{Woo2013} in the
 range $\tfrac{1}{4}(k+1)^2<s<k^2-1$.

\begin{theorem}\label{theorem1.2}One has the following upper bounds for $J_{s,k}(X)$.
\item{(i)} Let $s$ and $m$ be non-negative integers with
$$2m\le k\quad \text{and}\quad s\ge (k-m)^2+(k-m).$$
Then for each $\eps>0$, one has
\begin{equation}\label{1.5}
J_{s,k}(X)\ll X^{2s-\frac{1}{2}k(k+1)+\del_{k,m}+\eps},
\end{equation}
where
$$\del_{k,m}=m^2.$$
\item{(ii)} Let $s$ and $m$ be non-negative integers with
$$2m\le k-1\quad \text{and}\quad s\ge (k-m)^2-1.$$
Then for each $\eps>0$, one has the upper bound (\ref{1.5}) with
$$\del_{k,m}=m^2+m+\frac{m}{k-m-1}.$$
\end{theorem}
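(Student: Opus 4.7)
The plan is to refine the efficient congruencing iteration from the earlier sections of the paper, adapting it to cover the larger values of $s$ permitted here at the cost of a controlled deficit $\delta_{k,m}$. Writing
\[
\lambda^*_{s,k} = \limsup_{X\to\infty} \frac{\log J_{s,k}(X)}{\log X},
\]
the target is to establish the inequality $\lambda^*_{s,k}\le 2s-\tfrac12 k(k+1)+\delta_{k,m}$ for each admissible pair $(s,m)$. The basic mechanism is a self-improving H\"older-plus-congruencing recursion, which compares $J_{s,k}(X)$, through suitable auxiliary conditioned moments, to itself at a smaller exponent; this forces $\lambda^*_{s,k}$ into the desired range.

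For part (i), the approach is to instrument the congruencing iteration so that, after sufficiently many rounds, it effectively tracks only $k-m$ of the $k$ power moment equations. The hypothesis $s\ge (k-m)(k-m+1)$ is exactly what is required in order that enough variables remain to execute the final H\"older step that closes the loop, and to guarantee solubility of the resulting balance equations for the conditioning exponents. Each of the $m$ shortened rounds contributes a controlled loss to the exponent, and summing over these $m$ rounds produces the aggregate deficit $\delta_{k,m}=m^2$. The conclusion then follows by the customary self-improvement argument, iterated until the positive coefficient of $\lambda^*_{s,k}$ on the right-hand side of the recursion is driven arbitrarily close to zero.

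For part (ii), the same framework is deployed, but with one additional H\"older step that trades a larger deficit for a relaxation of the constraint on $s$, from $(k-m)(k-m+1)$ down to $(k-m)^2-1$. The extra terms in $\delta_{k,m} = m^2+m+m/(k-m-1)$ arise respectively from the single additional step and from the suboptimality of splitting the H\"older exponents across only $k-m-1$ remaining variables. In both parts, the recursion is then solved for $\lambda^*_{s,k}$ to deliver the stated bound (\ref{1.5}).

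The principal technical obstacle is the exact bookkeeping of the exponent losses across the $m$ rounds of weakened congruencing: the H\"older exponents at each stage must combine so as to yield $\delta_{k,m}$ precisely rather than any weaker quantity, and the compatibility between nested conditioning scales must be preserved throughout. In part (ii) this bookkeeping is particularly delicate, because the hypothesis $s\ge (k-m)^2-1$ is near-tight and leaves essentially no slack in the final H\"older step; securing the specific denominator $k-m-1$ in the contribution $m/(k-m-1)$ will require a careful optimization of the remaining free parameters in the iteration, and a verification that the admissibility conditions on the conditioning parameters continue to be satisfied at the terminal step.
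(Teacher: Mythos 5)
Your proposal misses the actual structure of the paper's argument and contains a conceptual error about where the deficit $\delta_{k,m}$ comes from. In the paper, Theorem \ref{theorem1.2} is a two-line corollary of Theorem \ref{theorem1.3}, the central technical result of the paper, which asserts $J_{r(t+1),k}(X)\ll X^{2r(t+1)-\kappa+\eps}$ with $\kappa=\kappa(r,t,k)$ given by the explicit formula \eqref{1.7}. Part (i) follows by taking $r=t=k-m$, whereupon $s=r(t+1)=(k-m)^2+(k-m)$ and a direct computation gives $\kappa=\tfrac12 k(k+1)-m^2$; part (ii) follows by taking $r=k-m-1$, $t=k-m$, so $s=(k-m)^2-1$ and $\kappa=\tfrac12k(k+1)-m^2-m-\tfrac{m}{k-m-1}$. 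The hypotheses $2m\le k$ and $2m\le k-1$ are precisely what ensure the admissibility condition $r+t\ge k$ in \eqref{1.6}, and the larger values of $s$ are handled by the trivial convexity remark. Your proposal never identifies the two free parameters $(r,t)$ that govern the congruence system \eqref{3.1} nor the corresponding choice that yields each part.

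Your account of the deficit is also wrong in a way that would prevent the argument from closing. You describe $\delta_{k,m}=m^2$ as accumulating "over these $m$ rounds" of shortened congruencing, but the efficient congruencing iteration is run $N$ times with $N\to\infty$, not $m$ times; the deficit is not an accumulated loss but the fixed value $\tfrac12k(k+1)-\kappa(r,t,k)$ baked into every round of the recursion through Lemma \ref{lemma3.3}, whose bound $B^{r,t}_{a,b}(p)\le k!p^{\mu b+\nu a}$ with $\mu,\nu$ as in \eqref{3.4} encodes how much lifting is needed when only $r$ of the $k$ congruences are retained and lifting proceeds only to level $tb$. Similarly, your proposed derivation of part (ii) by "one additional H\"older step" from part (i) has no analogue in the paper and would not reproduce the term $m/(k-m-1)$, which arises from the $\tfrac{2r-2}{t-1}$ term in \eqref{1.7} with the specific value $r=k-m-1$, $t=k-m$. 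What the proposal lacks is any concrete recursion or computation; the "exact bookkeeping" you flag as the principal obstacle is in fact the entire content of the proof, and asserting it works without exhibiting the formula for $\kappa$ and the parameter choices does not constitute a proof.
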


We note that the second bound of Theorem \ref{theorem1.2}, with $m=0$, recovers Theorem 1.1 of
 \cite{Woo2013}, which asserts that the bound \eqref{1.3} holds for $s\ge k^2-1$. Meanwhile, the first
 bound of Theorem \ref{theorem1.2}, again with $m=0$, recovers the earlier estimate provided by the
 main theorem of \cite{Woo2012a}, which delivered (\ref{1.3}) for $s\ge k^2+k$.\par

One measure of the strength of Theorem \ref{theorem1.2} compared with previous work is provided by
 the bound for $J_{s,k}(X)$ furnished in the central case $s=\frac12 k(k+1)$. For this value of $s$, it
 follows from \cite[Theorem 1.4]{Woo2013} that
$$J_{s,k}(X)\ll X^{s+\Del},$$
with $\Del=\frac{1}{8}k^2+O(k)$. Meanwhile, Theorem 1.2 above establishes such a bound with 
$\Del=(\tfrac{3}{2}-\sqrt{2})k^2+O(k)$. Note that
$$\tfrac{3}{2}-\sqrt{2}=0.085786\ldots <0.125=\tfrac{1}{8}.$$
More generally, in the situation with $s=\alp k^2$, in which $\alp$ is a parameter with $\tfrac{1}{4}\le
 \alp \le 1$, we find from \cite[Theorem 1.4]{Woo2013} that
$$J_{s,k}(X) \ll X^{2s-\frac12 k(k+1)+\Del(\alp)},$$
where $\Del(\alp)=\frac12 (1-\alp)^2k^2+O(k)$. Theorem \ref{theorem1.2}, on the other hand, shows
 that such a bound holds with $\Del(\alp)=(1-\sqrt{\alp})^2k^2+O(k)$. Note on this occasion that when
 $\tfrac{1}{4}\le \alp<1$ one has
$$(1-\sqrt{\alp})^2<\tfrac{1}{2}(1-\alp)^2,$$
as is easily verified by a modest computation.\par

Theorems \ref{theorem1.1} and \ref{theorem1.2} are special cases of a more general estimate, and it is
 the proof of this which is our focus in \S\S2 to 7.

\begin{theorem}\label{theorem1.3}
Suppose that $k$, $r$ and $t$ are positive integers with
\be\label{1.6}
k\ge 2,\quad \max\{2, \tfrac{1}{2}(k-1)\} \le t \le k,\quad 1\le r\le k\quad \text{and}\quad r+t\ge k.
\ee
Define $\kap=\kap(r,t,k)$ by
\begin{equation}\label{1.7}
\kap=r(t+1)-\tfrac{1}{2}(t+r-k)\left( t+r-k-1+\frac{2r-2}{t-1}\right).
\end{equation}
Then for each $\eps>0$, one has
$$J_{r(t+1),k}(X)\ll X^{2r(t+1)-\kap+\eps}.$$
\end{theorem}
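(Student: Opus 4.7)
The proof follows the efficient congruencing framework of \cite{Woo2012a} and \cite{Woo2013}, with bookkeeping tailored to the structure $s=r(t+1)$. Denote by $\lam=\lam_{s,k}$ the infimum of real numbers $\Lam$ for which $J_{s,k}(X)\ll X^{\Lam+\eps}$ holds for every $\eps>0$. The goal is to establish $\lam\le 2s-\kap$; the plan is to suppose $\lam>2s-\kap$ and extract a contradiction.

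For a prime $p$ of size roughly $X^{\eta_0}$ (with $\eta_0>0$ small) and positive integers $a<b$, I would introduce auxiliary mean values $K_{a,b}(X;\xi,\zet)$ counting solutions of the system~\eqref{1.1} in which $r$ variables on the right lie in a fixed residue class $\xi\pmod{p^a}$, and the remaining $rt$ variables on the right lie in a class $\zet\pmod{p^b}$ with $\xi\not\equiv\zet\pmod{p}$. A standard averaging over progressions modulo~$p$, combined with translation invariance of~\eqref{1.1} (via the shift by $-\xi$) and the removal of a diagonal contribution, produces an initial bound
\[
J_s(X)\ll p^{A}\max_{\xi,\zet}K_{1,k}(X;\xi,\zet)
\]
for an explicit constant $A=A(r,t,k)$.

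The heart of the matter is a conditioning lemma that exchanges $K_{a,b}$ for $K_{b,tb}$. A Vandermonde-type determinant argument modulo $p$ shows that, once the variables are constrained via~\eqref{1.1}, the $r$ variables already concentrated modulo $p^{a}$ are forced (up to permutation, after extracting a diagonal contribution) into a common class modulo $p^{b}$ provided $r+t\ge k$; this is precisely where the hypothesis $r+t\ge k$ enters, the quantity $t+r-k$ measuring the ``excess'' depth of the lift. Combined with Hölder's inequality partitioning the $rt$ ``free'' variables into $t$ blocks of $r$, this yields a recursive inequality relating $K_{a,b}$ to $K_{b,tb}^{1/(t-1)}$ times elementary factors. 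The exponent $1/(t-1)$ from Hölder, together with the lift depth $t+r-k$, combine to produce the term $\tfrac12(t+r-k)\bigl(t+r-k-1+\tfrac{2r-2}{t-1}\bigr)$ in~\eqref{1.7}.

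Iterating this exchange $N$ times generates a sequence of congruence depths $1=b_0<b_1<\cdots<b_N$ growing geometrically by the factor~$t$, and produces an inequality
\[
X^{\lam s-\eps}\ll X^{\Psi_N}J_s\bigl(X^{\theta_N}\bigr)^{\ome_N}
\]
with explicit sequences $\Psi_N$, $\theta_N$, and $\ome_N$. The conditions $t\ge 2$ and $t\ge(k-1)/2$ ensure the Hölder step is non-degenerate and that the iteration converges. Applying $J_s(Y)\ll Y^{\lam+\eps}$ inside and letting $N\to\infty$ forces $\lam\le 2s-\kap$, contradicting the supposition. The main obstacle—and the source of the precise shape of \eqref{1.7}—is the delicate accounting of exponents through the iteration: the $r(t+1)$ accounts for the variable count, $\tfrac12(t+r-k)(t+r-k-1)$ captures the quadratic deficit accumulated in lifting congruences over the $t+r-k$ surplus levels, and the correction $\tfrac{2r-2}{t-1}$ isolates the loss from the Hölder exchange among $t-1$ effective blocks.
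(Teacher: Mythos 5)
Your high-level strategy matches the paper's: set up auxiliary mixed mean values, prove a congruence lemma via a non-singularity (Vandermonde/Hensel) argument, and iterate to deduce a contradiction. The architecture is right, but a couple of the claimed mechanisms are wrong in ways that would derail the exponent bookkeeping if pursued literally.

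First, the key Hölder exponent in the congruencing step is $1/t$, not $1/(t-1)$. In the paper one extracts from $K_{a,b}$ the quantity $\oint |\grf_{tb}^{2r}\grF_b^{2t}|$, and Hölder splits this as $U^{1-1/t}\,I_{b,tb}^{1/t}$, where $U\ll J_{s+r}(X/M^b)$. The quantity $\tfrac{2r-2}{t-1}$ appearing in $\kap$ does not arise from a Hölder exponent at all: it is forced by the requirement that the exponent $\omega$ appearing in the iteration inequality (Lemma~\ref{lemma5.2}) simplify to $(r-\tfrac{(t+r-k)(r-1)}{t-1})(b-ta)$, i.e.\ a nonnegative coefficient times $b-ta$. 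In other words, $\kap$ is \emph{chosen} so that the iteration closes, and $(2r-2)/(t-1)$ is exactly the correction needed for this; it is not a priori a Hölder loss from $t-1$ blocks. Were one to compute with the claimed $K_{b,tb}^{1/(t-1)}$ recursion, the exponent ledger would not balance.

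Second, the statement that the congruence lemma forces the $r$ variables ``into a common class modulo $p^b$'' misdescribes what happens. The relevant lemma (Lemma~\ref{lemma3.3}) bounds the number of equivalence classes \emph{modulo $p^{tb}$} of solutions to the system~\eqref{3.1} by $k!\,p^{\mu b+\nu a}$, with $\mu=\tfrac12(t+r-k)(t+r-k-1)$ and $\nu=\tfrac12(t+r-k)(k+r-t-1)$; the lifting goes from $p^a$ up to $p^{tb}$, with the $k-t$ lowest congruences discarded and the middle ones raised to level $p^{tb+\omega a}$. The quantity $t+r-k$ enters as the number of extra levels one must raise the smallest retained moduli, not as an ``excess depth of the lift''. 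Crucially, the argument hinges on conditioning: the $r$ variables must occupy \emph{distinct} residue classes modulo $p^{a+1}$ so that the Jacobian determinant is a unit mod $p$, enabling Hensel's lemma. Your sketch omits this conditioning step (which occupies \S4 and \S6 of the paper); without it the Vandermonde argument has no force. Finally, your initiation $J_s(X)\ll p^A K_{1,k}(X)$ is not the same as the paper's pre-congruencing lemma $J_{s+r}(X)\ll M^{2s+2sh/3}K_{0,1+h}(X)$; the latter must itself be established via the distinct-residue-class device, which again requires explicit conditioning.
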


Theorem \ref{theorem1.1} follows directly from Theorem \ref{theorem1.3} on taking $r$ and $t$ to be
 suitable integers satisfying $r+t=k$. When $k$ is even we put $r=t=k/2$, and when $k$ is odd we
 instead put $r=\tfrac{1}{2}(k+1)$ and $t=\tfrac{1}{2}(k-1)$. In each case it follows that $s=r(t+1)$ is
 the largest integer not exceeding $\tfrac{1}{4}(k+1)^2$, and we have $J_{s,k}(X)\ll X^{s+\eps}$. For
 smaller values of $s$, the same conclusion is a consequence of the convexity of exponents that follows 
from \RH\footnote{\RH was evidently first proved, in a form different from that usually found in textbooks,
 by L. J. Rogers, {\it An extension of a certain theorem in inequalities}, Messenger of Math., New Series
 {\bf XVII} (10) (February 1888), 145--150.}.\par

Theorem \ref{theorem1.2} follows in the first case from Theorem \ref{theorem1.3} on putting 
$r=t=k-m$, since then we obtain
\begin{align*}
\kap(r,t,k)&=(k-m)(k-m+1)-\tfrac{1}{2}(k-2m)(k-2m+1)\\
&=\tfrac{1}{2}k(k+1)-m^2.
\end{align*}
Meanwhile, in the second case we put $r=k-m-1$ and $t=k-m$, in this instance obtaining
\begin{align*}
\kap(r,t,k)&=(k-m-1)(k-m+1)-\tfrac{1}{2}(k-2m-1)\left( k-2m-\frac{2}{k-m-1}\right) \\
&=\tfrac{1}{2}k(k+1)-m^2-m-\frac{m}{k-m-1}.
\end{align*}

In broad strokes, Theorem \ref{theorem1.3} is obtained by fully incorporating the ideas of Arkhipov and
 Karatsuba \cite{AK1978} and Tyrina \cite{Tyr1987} into the efficient congruencing method which was first
 created in \cite{Woo2012a} and further developed in \cite{Woo2013}. The parameters $r$ and $t$
 control the way in which solutions of certain systems of congruences are counted (see \eqref{3.1} below).
 The power of the method is enhanced by the flexibility to choose the latter parameters, constrained only
 by \eqref{1.6}. In particular, the work in \cite{Woo2012a} corresponds to the case $r=t=k$, while
 \cite{Woo2013} covers the cases $t=k$ and $r+t=k+1$. We describe in more detail the role played by
 $r$ and $t$ in \S 3. The reader will find the fundamental estimate which lies at the core of our argument
 in Lemma \ref{3.3} below.\par

There are consequences of the new estimates supplied by Theorem \ref{theorem1.2} in particular so far as
 the asymptotic formula in Waring's problem is concerned. By applying the mean value estimates published
 in work \cite{For1995} of the first author in combination with mean value estimates restricted to minor
 arcs established in work \cite{Woo2012b} of the second author, one may convert improved estimates in
 Vinogradov's mean value theorem into useful estimates for mean values of exponential sums over $k$th
 powers. These in turn lead to improvements in bounds for the number of variables required to establish
 the anticipated asymptotic formula in Waring's problem. In the present paper we enhance these tools by
 engineering a hybrid of these approaches, increasing further the improvements stemming from Theorem
 \ref{theorem1.2}. We discuss this new hybrid approach in \S8, exploring in \S9 consequences for the
 asymptotic formula in Waring's problem. The details are somewhat complicated, and so we refer the
 reader to the latter section for a summary of the bounds now available.\par

The authors are grateful to Xiaomei Zhao for identifying an oversight in the original proof of Lemma \ref{lemma7.2} that we have remedied in the argument described in the present paper.
The authors also thank the referee for carefully reading the paper and for a number of useful comments.

%%%%%%%%%%%%%%%%%%%%%%%%%%%%%%%%%%%%%%%%%%%%%%%%%%%%%%%%%%%%%%%%%%%%%%%%%%%%%%%%%%%%%%%%%%%%%%%%%
\section{Preliminaries}
%%%%%%%%%%%%%%%%%%%%%%%%%%%%%%%%%%%%%%%%%%%%%%%%%%%%%%%%%%%%%%%%%%%%%%%%%%%%%%%%%%%%%%%%%%%%%%%%%%%%%%%%

We initiate the proof of Theorem \ref{theorem1.3} by setting up the apparatus necessary for the
 application of the efficient congruencing method. Here, we take the opportunity to introduce a number of
 simplifications over the treatments of \cite{Woo2012a} and \cite{Woo2013} that have become apparent as
 the method has become more familiar. Since we consider the integer $k$ to be fixed, we abbreviate
 $J_{s,k}(X)$ to $J_s(X)$ without further comment. Our attention is focused on bounding $J_s(X)$ where,
 for the moment, we think of $s$ as being an arbitrary natural number. We define the real number
 $\lam_s^*$ by means of the relation
$$\lam_s^*=\underset{X\rightarrow \infty}{\lim \sup}\, \frac{\log J_s(X)}{\log X}.$$
It follows that, for each $\del>0$, and any real number $X$ sufficiently large in terms of $s$, $k$ and
 $\del$, one has $J_s(X)\ll X^{\lam_s^*+\del}$. In the language of \cite{Woo2012a} and
 \cite{Woo2013}, the real number $\lam_s^*$ is the infimum of the set of exponents $\lam_s$
 permissible for $s$ and $k$. In view of the lower bound (\ref{1.2}), together with a trivial bound for
 $J_s(X)$, we have
\be\label{2.1}
\max\{s,2s-\tfrac{1}{2}k(k+1)\}\le \lam_s^*\le 2s,
\ee
while the conjectured upper bound (\ref{1.3}) implies that the first inequality in \eqref{2.1} should hold
 with equality.\par

Next, we record some conventions that ease our expositary burden in what follows. The letters $k$, 
$r$ and $t$ denote fixed positive integers satisfying \eqref{1.6}, and $$s=rt.$$ We make sweeping use of 
vector notation. In particular, we may write $\bfz\equiv \bfw\mmod{p}$ to denote that 
$z_i\equiv w_i\mmod{p}$ $(1\le i\le r)$, $\bfz\equiv \xi\mmod{p}$ to denote that $z_i\equiv
\xi\mmod{p}$ $(1\le i\le r)$, or $[\bfz \mmod{q}]$ to denote the $r$-tuple $(\zet_1,\ldots,\zet_r)$,
 where for $1\le i\le r$ one has $1\le \zet_i\le q$ and $z_i \equiv \zet_i\pmod{q}$. Also, we employ the
 convention that whenever $G:[0,1)^k\rightarrow \dbC$ is integrable, then
$$\oint G(\bfalp)\d\bfalp =\int_{[0,1)^k}G(\bfalp)\d\bfalp.$$

\par For brevity, we write $\lam=\lam_{s+r}^*$. Our goal is to show that $\lam \le 2(s+r)-\kappa$, in
 which $\kap$ is the carefully chosen target exponent given in (\ref{1.7}). Let $N$ be an arbitrary natural
 number, sufficiently large in terms of $s$, $k$, $t$ and $r$, and put
\begin{equation}\label{2.2}
\tet=(16t)^{-N-1}\quad \text{and}\quad \del =(1000Nt^N)^{-1}\tet .
\end{equation}
In view of the definition of $\lam$, there exists a sequence of natural numbers $(X_\ell)_{\ell=1}^\infty$,
 tending to infinity, with the property that
\begin{equation}\label{2.3}
J_{s+r}(X_\ell)>X_\ell^{\lam-\del}\quad (\ell\in \dbN).
\end{equation}
Also, provided that $X_\ell$ is sufficiently large, one has the corresponding upper bound
\begin{equation}\label{2.4}
J_{s+r}(Y)<Y^{\lam+\del}\quad \text{for}\quad Y \ge X_{\ell}^{1/2}.
\end{equation}
In the argument that follows, we take a fixed element $X=X_\ell$ of the sequence
 $(X_\ell)_{\ell=1}^\infty$, which we may assume to be sufficiently large in terms of $s$, $k$, $r$, $t$
 and $N$. We then put $M=X^\tet$. Throughout, constants implied in the notation of Landau and
 Vinogradov may depend on $s$, $k$, $r$, $t$, $N$, $\tet$, and $\del$, but not on any other variable.\par

Let $p$ be a fixed prime number with $M<p\le 2M$ to be chosen in due course. That such a prime exists
 is a consequence of the Prime Number Theorem. When $c$ and $\xi$ are non-negative integers, and
 $\bfalp \in [0,1)^k$, define
\begin{equation}\label{2.5}
\grf_c(\bfalp;\xi)=\sum_{\substack{1\le x\le X\\ x\equiv \xi\mmod{p^c}}}\er(\alp_1x+\alp_2x^2+\ldots
 +\alp_kx^k),
\end{equation}
where $\er(z)$ denotes the imaginary exponential $\er^{2\pi i z}$. As in \cite{Woo2013}, we must consider well-conditioned 
$r$-tuples of integers belonging to distinct congruence classes modulo a suitable power of $p$. The
 following notations are similar to, though slightly simpler than, the corresponding notations introduced in
 \cite{Woo2012a} and \cite{Woo2013}. Denote by $\Xi_c^r(\xi)$ the set of $r$-tuples 
$(\xi_1,\ldots ,\xi_r)$, with
$$1\le \xi_i\le p^{c+1}\quad \text{and}\quad \xi_i\equiv \xi\pmod{p^c}\quad (1\le i\le r),$$
and such that $\xi_1,\ldots,\xi_r$ are distinct modulo $p^{c+1}$. We then define
\begin{equation}\label{2.6}
\grF_c(\bfalp;\xi)=\sum_{\bfxi\in \Xi_c^r(\xi)}\prod_{i=1}^r\grf_{c+1}(\bfalp;\xi_i),
\end{equation}
where the exponential sums $\grf_{c+1}(\bfalp;\xi_i)$ are defined via (\ref{2.5}).\par

Two mixed mean values play leading roles within our arguments. When $a$ and $b$ are positive integers,
 we define
\begin{equation}\label{2.7}
I_{a,b}(X;\xi,\eta)=\oint |\grF_a(\bfalp;\xi)^2\grf_b(\bfalp;\eta)^{2s}|\d\bfalp
\end{equation}
and
\begin{equation}\label{2.8}
K_{a,b}(X;\xi,\eta)=\oint |\grF_a(\bfalp;\xi)^2\grF_b (\bfalp;\eta)^{2t}|\d \bfalp .
\end{equation}
For future reference, we note that as a consequence of orthogonality, the mean value $I_{a,b} (X;\xi,\eta)$
 counts the number of integral solutions of the system
\begin{equation}\label{2.9}
\sum_{i=1}^r (x_i^j-y_i^j)=\sum_{l=1}^s(v_l^j-w_l^j)\quad (1\le j\le k),
\end{equation}
with
$$1\le \bfx,\bfy,\bfv,\bfw\le X,\quad \bfv\equiv\bfw\equiv \eta\pmod{p^b},$$
$$[\bfx \mmod{p^{a+1}}] \in \Xi_a^r(\xi)\quad \text{and}\quad [\bfy \mmod{p^{a+1}}]
\in \Xi_a^r(\xi).$$
Similarly, the mean value $K_{a,b} (X;\xi,\eta)$ counts the number of integral solutions of the system
\begin{equation}\label{2.10}
\sum_{i=1}^r (x_i^j-y_i^j)=\sum_{l=1}^t\sum_{m=1}^r (v_{lm}^j-w_{lm}^j)\quad (1\le j\le k),
\end{equation}
with
$$1\le \bfx,\bfy\le X,\quad [\bfx \mmod{p^{a+1}}]\in \Xi_a^r(\xi),\quad [\bfy \mmod{p^{a+1}}]
\in \Xi_a^r(\xi),$$
and for $1\le l\le t$,   
$$1\le \bfv_l,\bfw_l\le X,\quad [\bfv_l\mmod{p^{b+1}}]\in \Xi_b^r(\eta),\quad
 [\bfw_l\mmod{p^{b+1}}]\in \Xi_b^r(\eta).$$

\par It is convenient to put
\begin{equation}\label{2.11}
I_{a,b}(X)=\max_{1\le \xi\le p^a}\max_{\substack{1\le \eta\le p^b\\ \eta\not\equiv
 \xi\mmod{p}}}I_{a,b}(X;\xi,\eta)
\end{equation}
and
\begin{equation}\label{2.12}
K_{a,b}(X)=\max_{1\le \xi\le p^a}\max_{\substack{1\le \eta\le p^b\\ \eta\not\equiv \xi\mmod{p}}} 
K_{a,b}(X;\xi,\eta).
\end{equation}
Of course, these mean values implicitly depend on our choice of $p$, and this will depend on $s$, $k$,
 $r$, $t$, $\tet$ and $X_\ell$ alone. Since we fix $p$ in the pre-congruencing step described in \S6,
 following the proof of Lemma \ref{lemma6.1}, the particular choice will be rendered irrelevant.\par

The pre-congruencing step requires a definition of $K_{0,b}(X)$ consistent with the conditioning idea, and
 this we now describe. When $\xi$ is an integer and $\bfzet$ is a tuple of integers, we denote by
 $\Xi(\bfzet)$ the set of $r$-tuples $(\xi_1,\ldots ,\xi_r) \in \Xi_0^r(0)$ such that 
$\xi_i\not \equiv \zet_j\mmod{p}$ for all $i$ and $j$. Recalling \eqref{2.5}, we put
\begin{equation}\label{2.13}
\grF(\bfalp;\bfzet)=\sum_{\bfxi\in \Xi(\bfzet)}\prod_{i=1}^r\grf_1(\bfalp;\xi_i).
\end{equation}
Finally, we define
\begin{align}
\Itil_c(X;\eta)&=\oint |\grF(\bfalp;\eta)^2\grf_c(\bfalp;\eta)^{2s}|\d\bfalp ,\label{2.14}\\
\Ktil_c(X;\eta)&=\oint |\grF(\bfalp;\eta)^2\grF_c (\bfalp;\eta)^{2t}|\d\bfalp ,\label{2.15}\\
K_{0,c}(X)&=\max_{1\le \eta\le p^c} \Ktil_c (X;\eta).\label{2.16}
\end{align}

\par As in \cite{Woo2012a} and \cite{Woo2013}, our arguments are simplified by making transparent the
 relationship between mean values and their anticipated magnitudes. In this context, we define 
$\llbracket J_{s+r}(X)\rrbracket$ by means of the relation
\begin{equation}\label{2.17}
J_{s+r}(X)=X^{2s+2r-\kap}\llbracket J_{s+r}(X)\rrbracket .
\end{equation}
 Also, we define $\llbracket I_{a,b}(X)\rrbracket$ and $\llbracket K_{a,b}(X)\rrbracket$ by means of the
 relations
\begin{equation}\label{2.18}
I_{a,b}(X)=(X/M^b)^{2s}(X/M^a)^{2r-\kap}\llbracket I_{a,b}(X)\rrbracket 
\end{equation}
and
\begin{equation}\label{2.19}
K_{a,b}(X)=(X/M^b)^{2s}(X/M^a)^{2r-\kap}\llbracket K_{a,b}(X)\rrbracket .
\end{equation}
The lower bound \eqref{2.3}, in particular, may now be written as
\newcommand{\zero}{\Lambda}
\begin{equation}\label{2.20}
\llbracket J_{s+r}(X)\rrbracket >X^{\zero-\del},
\end{equation}
where we have written
\be\label{2.21}
\zero =\lam-2(s+r)+\kappa.
\ee

\par We finish this section by recalling a simple estimate from \cite{Woo2012a} that encapsulates the
 translation-dilation invariance of the Diophantine system \eqref{1.1}.

\begin{lemma}\label{lemma2.1}
Suppose that $c$ is a non-negative integer with $c\tet\le 1$. Then for each natural number $u$, one has
$$\max_{1\le \xi\le p^c}\oint |\grf_c(\bfalp;\xi)|^{2u}\d\bfalp \ll_u J_u(X/M^c).$$
\end{lemma}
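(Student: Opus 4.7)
The plan is to exploit the translation-dilation invariance of the Vinogradov system directly via orthogonality, rather than attempting to change variables in the frequency space $\bfalp$.

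First I would fix $\xi$ with $1\le \xi\le p^c$ and interpret $\oint |\grf_c(\bfalp;\xi)|^{2u}\d\bfalp$ via orthogonality as the number of integral $2u$-tuples $(\bfx,\bfy)$ with $1\le x_i,y_i\le X$, each congruent to $\xi$ modulo $p^c$, satisfying the Vinogradov system $\sum_{i=1}^{u}(x_i^j-y_i^j)=0$ for $1\le j\le k$. Next I would parametrise these solutions by writing $x_i=\xi+p^c u_i$ and $y_i=\xi+p^c v_i$, so that $u_i$ and $v_i$ range over integers in an interval $\calI$ of length at most $\lceil X/p^c\rceil$. Since $M<p$ and $c\tet\le 1$, one has $p^c\ge M^c$ and $X/p^c\ll X/M^c$, so $\calI$ is contained (after translation) in $[1,Y]$ for some $Y\ll X/M^c$.

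The key calculation is that the binomial expansion
\[
x_i^j-y_i^j=\sum_{l=1}^{j}\binom{j}{l}\xi^{j-l}p^{cl}(u_i^l-v_i^l)
\]
makes the Vinogradov system in $(\bfx,\bfy)$ equivalent to a triangular linear system in the power sums $S_l=\sum_{i=1}^{u}(u_i^l-v_i^l)$ with non-zero diagonal entries $\binom{j}{j}\xi^{0}p^{cj}=p^{cj}$. By induction on $j$ this forces $S_l=0$ for every $1\le l\le k$, so $(\bfu,\bfv)$ itself satisfies the Vinogradov system. Translation invariance of that system (shifting all $u_i,v_i$ by a common integer preserves all the equations) then lets us translate $\calI$ to $[1,Y]$, yielding the bound
\[
\oint |\grf_c(\bfalp;\xi)|^{2u}\d\bfalp\le J_u(Y)\ll_u J_u(X/M^c),
\]
where the last step uses the monotonicity of $J_u$. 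Taking the maximum over $\xi$ completes the argument.

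The only mild obstacle is verifying that the bijection between solutions of the two Vinogradov systems is exact; this is entirely controlled by the triangular structure of the binomial expansion and the invertibility of its diagonal, so no genuine difficulty arises. The proof is thus a direct structural manipulation rather than an analytic estimate, and the $\ll_u$ constant absorbs the $O(1)$ loss in replacing the length $\lceil X/p^c\rceil$ by $X/M^c$.
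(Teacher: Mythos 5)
Your proof is correct and is essentially the same argument the paper invokes: \cite[Lemma 3.1]{Woo2012a} is precisely the translation--dilation invariance argument, i.e. interpreting the integral as a count of solutions congruent to $\xi \pmod{p^c}$, substituting $x_i = \xi + p^c u_i$, using the binomial expansion to reduce to the untranslated Vinogradov system in $(\bfu,\bfv)$ over an interval of length $O(X/M^c)$, and absorbing the bounded-factor change of range into the implied constant.
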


\begin{proof} This is \cite[Lemma 3.1]{Woo2012a}.\end{proof}

We record an immediate consequence of Lemma \ref{lemma2.1} useful in what follows. 

\begin{corollary}\label{corollary2.2}
 Suppose that $c$ and $d$ are non-negative integers with $c\le \tet^{-1}$ and $d\le \tet^{-1}$. Then
 whenever $u,v\in \dbN$ and $\xi,\zet\in\dbZ$, one has
$$\oint \left| \grf_c(\bfalp;\xi)^{2u}\grf_d(\bfalp;\zeta)^{2v}\right|\d\bfalp \ll_{u,v} 
\(J_{u+v}(X/M^c)\)^{u/(u+v)}\( J_{u+v}(X/M^d) \)^{v/(u+v)}.$$
\end{corollary}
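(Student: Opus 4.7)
The plan is to reduce the claim to Lemma \ref{lemma2.1} via Hölder's inequality. Since the exponential sum $\grf_c(\bfalp;\xi)$ defined in \eqref{2.5} depends on $\xi$ only through its residue class modulo $p^c$, I may assume without loss of generality that $1\le \xi\le p^c$ and $1\le \zet\le p^d$, which places us squarely in the setting of Lemma \ref{lemma2.1}. Writing the integrand as a product of two factors and invoking Hölder's inequality with conjugate exponents $(u+v)/u$ and $(u+v)/v$, I obtain
$$\oint |\grf_c(\bfalp;\xi)^{2u}\grf_d(\bfalp;\zet)^{2v}|\d\bfalp \le \Bigl(\oint |\grf_c(\bfalp;\xi)|^{2(u+v)}\d\bfalp\Bigr)^{u/(u+v)} \Bigl(\oint |\grf_d(\bfalp;\zet)|^{2(u+v)}\d\bfalp\Bigr)^{v/(u+v)}.$$

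The hypotheses $c\le \tet^{-1}$ and $d\le \tet^{-1}$ guarantee that $c\tet\le 1$ and $d\tet\le 1$, so Lemma \ref{lemma2.1}, applied with the index $u$ there replaced by $u+v$, bounds the two factors by $\ll_{u,v} J_{u+v}(X/M^c)$ and $\ll_{u,v} J_{u+v}(X/M^d)$ respectively. Inserting these estimates gives the claimed inequality. There is really no obstacle here: this corollary is merely a routine convexity consequence of Lemma \ref{lemma2.1}, recorded for convenient reference in later sections where mixed mean values of this shape arise.
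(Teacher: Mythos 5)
Your proof is correct and is exactly the argument the paper intends: split the integral with Hölder's inequality using conjugate exponents $(u+v)/u$ and $(u+v)/v$, then apply Lemma \ref{lemma2.1} with index $u+v$ to each factor. The paper simply states that the corollary ``follows at once from Lemma \ref{lemma2.1} via H\"older's inequality,'' and you have supplied the routine details.
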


\begin{proof} This follows at once from Lemma \ref{lemma2.1} via \RH.\end{proof}

%%%%%%%%%%%%%%%%%%%%%%%%%%%%%%%%%%%%%%%%%%%%%%%%%%
%
\section{Auxiliary systems of congruences}
%
%%%%%%%%%%%%%%%%%%%%%%%%%%%%%%%%%%%%%%%%%%%%%%%%%%%

Following the pattern established in \cite{Woo2012a}, in which efficient congruencing was introduced, and
 further developed in \cite{Woo2013}, we begin the main thrust of our analysis with a discussion of the
 congruences that play a critical role in our method. 
\par

Recall the conditions \eqref{1.6} on $k$, $r$ and $t$. When $a$ and $b$ are integers with $1\le a<b$, we
 denote by $\calB_{a,b}^r(\bfm;\xi,\eta)$ the set of solutions of the system of congruences
\begin{equation}\label{3.1}
\sum_{i=1}^r (z_i-\eta)^j\equiv m_j\mmod{p^{jb}}\quad (1\le j\le k),
\end{equation}
with $1\le \bfz\le p^{kb}$ and $\bfz\equiv \bfxi\pmod{p^{a+1}}$ for some $\bfxi\in \Xi_a^r(\xi)$. We
 define an equivalence relation $\calR(\lam)$ on integral $r$-tuples by declaring the $r$-tuples $\bfx$ and
 $\bfy$ to be $\calR(\lam)$-equivalent when $\bfx\equiv\bfy\mmod{p^\lam}$. We then write
 $\calC_{a,b}^{r,t}(\bfm;\xi,\eta)$ for the set of $\calR(tb)$-equivalence classes of 
$\calB_{a,b}^r(\bfm;\xi,\eta)$, and we define $B_{a,b}^{r,t}(p)$ by putting
\begin{equation}\label{3.2}
B_{a,b}^{r,t}(p)=\max_{1\le \xi\le p^a}\max_{\substack{1\le \eta\le p^b\\ \eta\not\equiv
 \xi\mmod{p}}}\max_{1\le \bfm\le p^{kb}}\text{card}(\calC_{a,b}^{r,t}(\bfm;\xi,\eta)).
\end{equation}

\par When $a=0$ we modify these definitions, so that $\calB_{0,b}^r(\bfm;\xi,\eta)$ denotes the set of
 solutions of the system of congruences (\ref{3.1}) with $1\le \bfz\le p^{kb}$ and $\bfz\equiv
 \bfxi\mmod{p}$ for some $\bfxi\in \Xi_0^r(\xi)$, and for which in addition one has $z_i\not\equiv
 \eta\mmod{p}$ for $1\le i\le r$. As in the previous case, we write $\calC_{0,b}^{r,t}(\bfm;\xi,\eta)$ for
 the set of $\calR(tb)$-equivalence classes of $\calB_{0,b}^r(\bfm;\xi,\eta)$, but we define 
$B_{0,b}^{r,t}(p)$ by putting
\begin{equation}\label{3.3}
B_{0,b}^{r,t}(p)=\max_{1\le \eta\le p^b}\max_{1\le \bfm\le p^{kb}}\text{card}
(\calC_{0,b}^{r,t}(\bfm;0,\eta)).
\end{equation}
We note that although the choice of $\xi$ in this situation with $a=0$ is irrelevant, it is notationally
 convenient to preserve the similarity with the situation in which $a\ge 1$.\par

Our argument exploits the non-singularity of the solution set underlying $B_{a,b}^{r,t}(p)$ by means of a
 version of Hensel's lemma made available within the following lemma.

\begin{lemma}\label{lemma3.1}
Let $f_1,\ldots ,f_d$ be polynomials in $\dbZ[x_1,\ldots ,x_d]$ with respective degrees $k_1,\ldots ,k_d$,
 and write
$$J(\bff;\bfx)=\mathrm{det}\left( \frac{\partial f_j}{\partial x_i}(\bfx)\right)_{1\le i,j\le d}.$$
When $\varpi$ is a prime number, and $l$ is a natural number, let $\calN(\bff;\varpi^l)$ denote the
 number of solutions of the simultaneous congruences
$$f_j(x_1,\ldots ,x_d)\equiv 0\pmod{\varpi^l}\quad (1\le j\le d),$$
with $1\le x_i\le \varpi^l$ $(1\le i\le d)$ and $(J(\bff;\bfx),\varpi)=1$. Then 
$\calN(\bff;\varpi^l)\le k_1\cdots k_d$.
\end{lemma}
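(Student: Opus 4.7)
The plan is to combine a multivariate Hensel lifting argument with B\'ezout's theorem for transverse intersections, reducing the count modulo $\varpi^l$ to a count modulo $\varpi$ and then bounding that count.

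For the lifting step, I would show that every solution modulo $\varpi^l$ with a unit Jacobian is the unique lift of its reduction modulo $\varpi$. Suppose $\bfx$ satisfies $f_j(\bfx)\equiv 0\pmod{\varpi^n}$ for each $j$, with $(J(\bff;\bfx),\varpi)=1$, and seek a refinement of the form $\bfx+\varpi^n\bfh$. The multivariate Taylor expansion gives
\[
f_j(\bfx+\varpi^n\bfh)\equiv f_j(\bfx)+\varpi^n\sum_{i=1}^d \frac{\partial f_j}{\partial x_i}(\bfx)\, h_i\pmod{\varpi^{n+1}},
\]
so that the congruences $f_j(\bfx+\varpi^n\bfh)\equiv 0\pmod{\varpi^{n+1}}$ reduce to the linear system
\[
J(\bff;\bfx)\bfh\equiv -\varpi^{-n}\bff(\bfx)\pmod{\varpi}
\]
in $\bfh$ modulo $\varpi$. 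Since the reduction of $J(\bff;\bfx)$ modulo $\varpi$ depends only on $\bfx$ modulo $\varpi$ and is invertible by hypothesis, this system has a unique solution $\bfh$ modulo $\varpi$. Iterating from $n=1$ up to $n=l-1$, one obtains a unique lift at each stage, whence $\calN(\bff;\varpi^l)\le \calN^*(\bff;\varpi)$, where $\calN^*(\bff;\varpi)$ denotes the count of solutions modulo $\varpi$ at which the Jacobian is a unit.

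For the bound $\calN^*(\bff;\varpi)\le k_1\cdots k_d$, I would pass to the algebraic closure $\overline{\dbF_\varpi}$ and invoke B\'ezout's theorem. At each point contributing to $\calN^*(\bff;\varpi)$, the $d$ hypersurfaces cut out by the $f_j$ meet transversally; this is precisely the content of the non-singularity of the Jacobian. Each such point is therefore isolated in their intersection and contributes intersection multiplicity one, so the projective B\'ezout theorem (applied after taking projective closures) bounds the number of such isolated transverse points by $\prod_{j=1}^d k_j$.

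The main delicate point is the B\'ezout step, since the strongest form of B\'ezout assumes an equidimensional complete intersection, whereas here the zero set may in principle contain positive-dimensional components away from the non-singular locus. This is handled either by an elementary affine variant of B\'ezout (induction on $d$ via resultants, with a careful tracking of degree increases) or by perturbing to a generic complete intersection and appealing to upper semicontinuity of intersection numbers. Either approach is routine once set up, and in a paper of this genre the result would typically be invoked by reference to a standard textbook rather than reproved from scratch.
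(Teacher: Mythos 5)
The paper does not prove this lemma; it is quoted verbatim from \cite[Theorem 1]{Woo1996}, so there is no in-paper argument to compare against. Your overall strategy---a multivariate Hensel lift to collapse the count modulo $\varpi^l$ down to the count modulo $\varpi$ of nonsingular solutions, followed by a B\'ezout bound on that count over $\overline{\dbF_\varpi}$---is the natural one and is essentially what the cited reference does. The Hensel step is carried out correctly: since $J(\bff;\bfx)\bmod\varpi$ depends only on $\bfx\bmod\varpi$, the unit-Jacobian condition descends to the residue, and the uniqueness of the lift at each stage yields an injection from solutions modulo $\varpi^l$ to nonsingular solutions modulo $\varpi$. (A small notational slip: in your displayed linear system $J(\bff;\bfx)$ should be the Jacobian \emph{matrix}, not the determinant, but the intent is clear.)

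The genuine gap is in the B\'ezout step, and you are right to single it out, but I would push back on the characterisation of it as ``routine.'' What you need is that the number of isolated points of $V(f_1,\ldots,f_d)\subset\overline{\dbF_\varpi}{}^{\,d}$ at which the intersection is transverse is at most $k_1\cdots k_d$, \emph{even though} the variety may have positive-dimensional components and further components at infinity after homogenising. That is precisely the content of the refined B\'ezout theorem (Fulton, Ch.~12) or the affine B\'ezout inequality, and it is not a corollary of the classical projective B\'ezout for proper zero-dimensional intersections. Your two proposed repairs---elimination via resultants with degree-tracking, or perturbation plus semicontinuity of local intersection numbers---can each be made to work, but as written they are pointers rather than proofs: the resultant route in particular requires one to ensure that no nonsingular solution is lost under projection (e.g.\ by a linear change of variables so that each $f_j$ is monic in the eliminated variable) and to keep the transversality condition under control through the elimination, which is exactly the technical work carried out in \cite{Woo1996}. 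So: correct reduction via Hensel, correct identification of the remaining obstacle, but the obstacle itself is substantive and in your sketch it is acknowledged rather than surmounted.
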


\begin{proof} This is \cite[Theorem 1]{Woo1996}.\end{proof}

We recall also an auxiliary lemma from \cite{Woo2013}, 
in which terms are eliminated between related polynomial expansions.

\begin{lemma}\label{lemma3.2} Let $\alp$ and $\bet$ be natural numbers. Then there exist integers
 $c_l$ $(\alp\le l\le \alp+\bet)$ and $d_m$ $(\bet\le m\le \alp+\bet)$, depending at most on $\alp$ and
 $\bet$, and with $d_\bet\ne 0$, for which one has the polynomial identity
$$c_\alp+\sum_{l=1}^\bet c_{\alp+l}(x+1)^{\alp+l}=\sum_{m=\bet}^{\alp+\bet}d_mx^m.$$
\end{lemma}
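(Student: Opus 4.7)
The plan is to view the identity as a linear-algebra problem and reduce the substantive content to a Vandermonde non-vanishing. Introduce the polynomial
$$P(x) = c_\alpha + \sum_{l=1}^\beta c_{\alpha+l}(x+1)^{\alpha+l},$$
whose degree is at most $\alpha+\beta$. Since the candidate right-hand side of the claimed identity vanishes at $x=0$ to order at least $\beta$, producing integers $d_m$ for which the identity holds is equivalent to requiring $x^\beta\mid P(x)$, that is, to the $\beta$ homogeneous linear conditions $P^{(j)}(0)=0$ for $j=0,1,\ldots,\beta-1$ on the $\beta+1$ unknowns $c_\alpha,c_{\alpha+1},\ldots,c_{\alpha+\beta}$. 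Such a system has a non-trivial rational solution; after clearing denominators, this furnishes the required integers $c_l$, and the integers $d_m$ are recovered as $P^{(m)}(0)/m!$ for $\beta\le m\le\alpha+\beta$.

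The main obstacle is to show that every non-trivial solution so produced satisfies $d_\beta\ne 0$. I argue by contradiction: if $d_\beta=0$ then additionally $P^{(\beta)}(0)=0$, giving $\beta+1$ homogeneous equations in $\beta+1$ unknowns. It suffices to prove that the resulting coefficient matrix is non-singular, forcing the trivial solution and so contradicting non-triviality. Using the row $P(0)=0$ to eliminate $c_\alpha$, the task reduces to proving that the $\beta\times\beta$ matrix with entries
$$M_{l,j}=(\alpha+l)(\alpha+l-1)\cdots(\alpha+l-j+1),\qquad 1\le l,j\le\beta,$$
is invertible.

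This is a classical Vandermonde computation. The falling factorials $y^{(j)}=y(y-1)\cdots(y-j+1)$ are related to the monomials $y^j$ by a unitriangular change of basis, so column operations transform $\det M$ into $\det\bigl((\alpha+l)^j\bigr)_{1\le l,j\le\beta}$. Extracting the factor $\alpha+l$ from each row then yields
$$\det M=\prod_{l=1}^\beta(\alpha+l)\cdot\prod_{1\le l<l'\le\beta}(l'-l),$$
which is non-zero since $\alpha\ge 1$. This supplies the required contradiction and completes the proof. Beyond recognizing the Vandermonde structure, no substantial difficulty is anticipated.
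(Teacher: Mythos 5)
Your proof is correct. Note that the paper itself does not prove this lemma but simply cites it to the earlier paper of Wooley (the one labelled Woo2013 in the bibliography), so there is no internal argument to compare against; what you have supplied is a self-contained proof. Your framing is the natural one: the existence part is a system of $\beta$ homogeneous linear conditions $P^{(j)}(0)=0$ ($0\le j\le\beta-1$) in $\beta+1$ unknowns $c_\alpha,\ldots,c_{\alpha+\beta}$, which automatically has a non-trivial solution, and then any non-trivial solution with $d_\beta=0$ would make the square $(\beta+1)\times(\beta+1)$ system singular, which it is not. Since $c_\alpha$ enters only the $j=0$ equation, expanding the full determinant along its first column reduces it at once to the $\beta\times\beta$ minor $M$ you identify, and the unitriangular passage from falling factorials to monomials followed by extraction of a factor $\alpha+l$ from each row does give
$$\det M=\prod_{l=1}^{\beta}(\alpha+l)\prod_{1\le l<l'\le\beta}(l'-l)\ne 0.$$
Two small remarks. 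First, the non-vanishing of $\prod_{l=1}^\beta(\alpha+l)$ only requires $\alpha\ge 0$, since then $\alpha+l\ge 1$ for all $l\ge 1$; citing $\alpha\ge 1$ is harmless but slightly stronger than needed. Second, it is worth stating explicitly that once the $c_l$ are integers, $P$ has integer coefficients, so the $d_m=P^{(m)}(0)/m!$ are integers automatically, as required by the lemma.
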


\begin{proof} This is \cite[Lemma 3.2]{Woo2013}.\end{proof}

Our approach to bounding $B_{a,b}^{r,t}(p)$ proceeds by discarding the $k-r$ congruences of smallest
modulus $p^{jb}$ $(1\le j\le k-r)$, but nonetheless aims to lift all solutions to the modulus $p^{tb}$.
 The idea of reducing the lifting required, which is tantamount to taking $t<k$, was first exploited  by
 Arkhipov and Karatsuba \cite{AK1978} in the setting of Linnik's classical $p$-adic approach
 \cite{Lin1943}. Likewise, taking $r<k$ removes from consideration those congruences that require the
 greatest lifting and produce the biggest inefficiency in the method.  Tyrina \cite{Tyr1987} took 
$r=t\ge k/2$ and further improved bounds on $J_{s,k}(X)$ for $s=O(k^2)$.  Later, the second author
used a hybrid approach (see \cite[Lemma 2.1]{Woo1994}), with $r$ and $t$ as free parameters, to obtain
large improvements to the bounds for $s=O(k^{3/2-\eps})$.

We also follow a very general approach here, keeping $r$ and $t$ as free parameters, subject only to the
 necessary constraints given in \eqref{1.6}. For Theorem \ref{theorem1.1}, the crucial observation is that 
when $r+t=k$, then there is no lifting at all and we capture only diagonal solutions in the symmetric
 version of \eqref{3.1}. This observation is reflected in the fact that the coefficients $\mu$ and $\nu$
 imminently to be defined satisfy the condition $\mu=\nu=0$ in this situation.\par

The following lemma generalises Lemmata 3.3 to 3.6 of \cite{Woo2013}. For future reference, at this point
 we introduce the coefficients
\begin{equation}\label{3.4}
\mu=\tfrac{1}{2}(t+r-k)(t+r-k-1)\quad \text{and}\quad \nu=\tfrac{1}{2}(t+r-k)(k+r-t-1).
\end{equation}

\begin{lemma}\label{lemma3.3} Suppose that $k$, $r$ and $t$ satisfy the conditions \eqref{1.6}, and
 further that $a$ and $b$ are integers with $0\le a<b$ and $b\ge (k-t-1)a$. Then
$$B_{a,b}^{r,t}(p)\le k!p^{\mu b+\nu a}.$$
\end{lemma}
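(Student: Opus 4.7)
My plan is to count the $\calR(tb)$-equivalence classes of $\bfz\in\calB_{a,b}^r(\bfm;\xi,\eta)$ by reducing the system \eqref{3.1} to a well-conditioned polynomial congruence system in $r$ unknowns, to which Lemma~\ref{lemma3.1} can be applied. Translating by $\eta$ (putting $w_i=z_i-\eta$) turns \eqref{3.1} into the power-sum congruences $\sum_i w_i^j\equiv m_j\pmod{p^{jb}}$, with each $w_i$ a unit modulo $p$; when $a\ge 1$ one moreover has $w_i\equiv \xi-\eta\pmod{p^a}$ with the $w_i$ pairwise distinct modulo $p^{a+1}$, so that $v_p(w_i-w_{i'})=a$ for $i\ne i'$.

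I next discard the $k-r$ congruences of smallest modulus and retain those indexed by $j\in S:=\{k-r+1,\ldots,k\}$. The $t+r-k$ equations in $S$ with $j\le t$ have modulus $p^{jb}<p^{tb}$, and each contributes a lifting factor of $p^{(t-j)b}$ when passing to residue classes modulo $p^{tb}$; the total contribution is
$$\prod_{j=k-r+1}^{t}p^{(t-j)b}=p^{\mu b},$$
which is the first factor in the desired bound. The remaining task is to bound the residue classes modulo $p^{tb}$ consistent with the $r$ retained congruences together with the side condition $\bfz\equiv\bfxi\pmod{p^{a+1}}$.

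When $a\ge 1$, the Jacobian of the map $\bfw\mapsto(\sum_i w_i^j)_{j\in S}$ equals
$$\frac{k!}{(k-r)!}\prod_{i}w_i^{k-r}\prod_{i<i'}(w_{i'}-w_i),$$
and its $p$-adic valuation is $\binom{r}{2}a$, which is too large for a direct appeal to Lemma~\ref{lemma3.1}. To remedy this I would substitute $w_i=(\xi-\eta)+p^a u_i$ and use the elimination identity of Lemma~\ref{lemma3.2} to form integer combinations of the retained congruences so as to kill the $\bfu$-terms of degree less than $r$ at the appropriate level of $p$-divisibility. The hypothesis $b\ge(k-t-1)a$ guarantees that, after extracting the predictable shared power of $p$ from both sides of each combined congruence, the result still holds modulo a positive power of $p$. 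The resulting system $g_1(\bfu)=\cdots=g_r(\bfu)=0$ has Jacobian of the shape $p^{\nu a}V(\bfu)$ with $V(\bfu)$ a Vandermonde-type polynomial in the $u_i$'s that is a $p$-adic unit; after dividing by $p^{\nu a}$ one obtains a clean system to which Lemma~\ref{lemma3.1} applies, yielding at most $\prod_{j\in S}j=k!/(k-r)!\le k!$ residue classes. The factor $p^{\nu a}$ then enters as the cost of returning from residue classes of $\bfu$ to residue classes of $\bfw$ in the rescaling.

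The principal obstacle is the elimination step: one must select the parameters in Lemma~\ref{lemma3.2} and form the precise linear combinations so that the residual Jacobian has $p$-adic valuation exactly $\nu a$, no more and no less. Getting this balance right is what forces the particular shape of $\nu$ in \eqref{3.4} and the cutoff $b\ge(k-t-1)a$. The case $a=0$ bypasses this difficulty entirely: there $\nu a=0$ and the Jacobian of the retained system is already a $p$-adic unit, since the $\xi_i$'s are distinct modulo $p$, so Lemma~\ref{lemma3.1} applies directly to the subsystem of the second paragraph, producing only the $p^{\mu b}$ factor and requiring no elimination at all.
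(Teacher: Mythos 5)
Your outline captures the broad strategy — translate by $\eta$, discard the $k-r$ congruences of lowest modulus, substitute $w_i=\zeta+p^a u_i$, use Lemma~\ref{lemma3.2} to lift the low-degree obstruction, and finish with Lemma~\ref{lemma3.1} — and your treatment of the case $a=0$ is correct. But the core of the $a\ge 1$ case has two genuine gaps, and they are exactly where the sharpness of the exponent $\nu$ is won.

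First, you lift the retained congruences to the modulus $p^{tb}$, which costs $p^{\mu b}$. The paper instead lifts to the modulus $p^{tb+\omega a}$, where $\omega=\max\{0,k-t-1\}$; this costs the larger amount $p^{\mu b+(t+r-k)\omega a}$ up front, but it is essential. After the substitution $w_i=\zeta+p^a u_i$ and the elimination via Lemma~\ref{lemma3.2}, the $j$-th congruence has a shared factor $p^{(j-\rho+1)a}$ (with $\rho=k-r+1$), and dividing it out leaves modulus $p^{tb+\omega a-(j-\rho+1)a}$. Pushing all $r$ of these to a common modulus $p^{tb-a}$ then costs a further $p^{\frac{1}{2}(r-\omega)(r-\omega-1)a}$, and $(t+r-k)\omega+\frac{1}{2}(r-\omega)(r-\omega-1)=\nu$ precisely. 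If instead one lifts only to $p^{tb}$, the same bookkeeping forces a payment of $\sum_{j=\rho}^{k}(j-\rho)a=\binom{r}{2}a$, and since $\nu=\binom{r}{2}-\frac{1}{2}(k-t)(k-t-1)$, this is strictly larger whenever $t<k-1$. That is the regime needed for Theorem~\ref{theorem1.1} (where $r+t=k$, so $t\le k/2<k-1$ for $k\ge 4$), so your version of the lemma is genuinely weaker there. The hypothesis $b\ge(k-t-1)a$ enters precisely to justify the higher lifting modulus $p^{tb+\omega a}$, namely to ensure the congruences of index $j>t$ still dominate it.

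Second, your account of where $p^{\nu a}$ is spent is misplaced. You assert that the transformed system $g_1(\bfu)=\cdots=g_r(\bfu)=0$ has Jacobian $p^{\nu a}V(\bfu)$ and that one ``divides by $p^{\nu a}$.'' In fact, after the elimination and the division by the shared power $p^{(j-\rho+1)a}$, the polynomials $\psi_j$ satisfy $\psi_j(z)\equiv z^{j-\rho+1}\pmod p$ (equation~\eqref{3.11}), so their Jacobian is $\equiv r!\prod_{i<m}(y_i-y_m)\pmod p$, a $p$-adic \emph{unit}. No power of $p$ divides it, and Lemma~\ref{lemma3.1} applies directly without any division. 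The entire factor $p^{\nu a}$ comes from counting preimages under reductions of the congruence moduli — the lifting to $p^{tb+\omega a}$ and the subsequent lowering to $p^{tb-a}$ — and none of it from a degenerate Jacobian. Reframing the loss as a Jacobian valuation not only misidentifies the mechanism, it also gives no route to extracting the crucial saving of $\frac{1}{2}\omega(\omega+1)a$ discussed above.
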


\begin{proof} We suppose in the first instance that $a\ge 1$. Fix integers $\xi$ and $\eta$ with
$$1\le \xi\le p^a,\quad 1\le \eta\le p^b\quad \text{and}\quad \eta\not\equiv \xi\mmod{p}.$$
We consider the set of $\calR(tb)$-equivalence classes of solutions $\calC_{a,b}^{r,t}(\bfm;\xi,\eta)$ of
 the system (\ref{3.1}), in our first step upgrading a subset of the congruences to the same level. Put 
$$\rho=k-r+1\quad \text{and}\quad \ome=\max \{0,k-t-1\}.$$
We denote by $\calD_1(\bfn)$ the set of $\calR(tb)$-equivalence classes of solutions of the system of
 congruences
\begin{equation}\label{3.5}
\sum_{i=1}^r(z_i-\eta)^j\equiv n_j\mmod{p^{tb+\ome a}}\quad (\rho\le j\le k),
\end{equation}
with $1\le \bfz\le p^{kb}$ and $\bfz\equiv \bfxi\mmod{p^{a+1}}$ for some 
$\bfxi\in \Xi_a^r(\xi)$. 

\par Recall our assumed bound $b\ge \ome a$ and fix an integral $k$-tuple $\bfm$. To any solution 
$\bfz$ of \eqref{3.1} there corresponds a unique $r$-tuple $\bfn=(n_\rho,\ldots,n_k)$ with 
$1\le \bfn\le p^{tb+\ome a}$ for which \eqref{3.5} holds and
$$n_j\equiv m_j\mmod{p^{\sigma(j)}}\quad (\rho\le j\le k),$$
where $\sig(j)=\min \{jb,tb+\ome a\}$. We therefore infer that
\[
\calC_{a,b}^{r,t}(\bfm;\xi,\eta) \subseteq \bigcup_{\substack{1\le n_\rho\le p^{tb+\ome a}\\ n_\rho\equiv 
m_\rho\mmod{p^{\sig(\rho)}}}}\cdots \bigcup_{\substack{1\le n_k\le p^{tb+\ome a}\\ n_k\equiv
 m_k\mmod{p^{\sig(k)}}}} \calD_1(\bfn).
\]
The number of $r$-tuples $\bfn$ in the union is equal to
$$\prod_{j=\rho}^t p^{(t-j)b+\ome a}=(p^b)^{\frac{1}{2}(t-\rho)(t-\rho+1)} (p^a)^{(t-\rho+1)
\ome }=p^{\mu b+(t-\rho+1)\ome a}.$$
Consequently, 
\begin{equation}\label{3.6}
\text{card}(\calC_{a,b}^{r,t}(\bfm;\xi,\eta))\le p^{\mu b+(t-\rho+1)\ome a}
\max_{1\le \bfn\le p^{tb+\ome a}}\text{card}(\calD_1(\bfn)).
\end{equation}

\par
Observe that for any solution $\bfz'$ of \eqref{3.5} there is an $\calR(tb)$-equivalent solution
$\bfz$ satisfying $1\le \bfz \le p^{tb+\omega a}$.
We next rewrite each variable $z_i$ in the shape $z_i=p^ay_i+\xi$. In view of the hypothesis that 
$\bfz\equiv \bfxi\mmod{p^{a+1}}$ for some $\bfxi\in \Xi_a^r(\xi)$, the $r$-tuple $\bfy$ necessarily
 satisfies 
\begin{equation}\label{3.7}
y_i\not\equiv y_m\mmod{p}\quad (1\le i<m\le r).
\end{equation}
Write $\zet=\xi-\eta$, and note that the constraint $\eta\not\equiv \xi\mmod{p}$ ensures that 
$p\nmid \zet$. We denote the multiplicative inverse of $\zet$ modulo $p^{tb+\ome a}$ by $\zet^{-1}$.
 In this way we deduce from (\ref{3.5}) that
 $\text{card}(\calD_1(\bfn))$ is bounded above by the number of $\calR(tb-a)$-equivalence classes of
 solutions of the system of congruences
\begin{equation}\label{3.8}
\sum_{i=1}^r (p^ay_i\zet^{-1}+1)^j\equiv n_j(\zet^{-1})^j\mmod{p^{tb+\ome a}}\quad 
(\rho\le j\le k),
\end{equation}
with $1\le \bfy\le p^{tb+(\ome-1)a}$ satisfying (\ref{3.7}). Let $\bfy=\bfw$ be any solution of the
 system (\ref{3.8}), if indeed any one such exists. Then we find that all other solutions $\bfy$ satisfy the
 system
\begin{equation}\label{3.9}
\sum_{i=1}^r \left((p^ay_i\zet^{-1}+1)^j-(p^aw_i\zet^{-1}+1)^j\right)\equiv 0
\pmod{p^{tb+\ome a}}\quad (\rho\le j\le k).
\end{equation}

\par Next we make use of Lemma \ref{lemma3.2} just as in the corresponding argument of the proof of
 \cite[Lemmata 3.3 to 3.6]{Woo2013}. Consider an index $j$ with $\rho\le j\le k$, and apply the latter
 lemma with $\alp=\rho-1$ and $\bet=j-\rho+1$. We find that there exist integers $c_{j,l}$ 
$(\rho-1\le l\le j)$ and $d_{j,m}$ $(j-\rho+1\le m\le j)$, depending at most on $j$ and $k$, and with
 $d_{j,j-\rho+1}\ne 0$, for which one has the polynomial identity
\begin{equation}\label{3.10}
c_{j,\rho-1}+\sum_{l=\rho}^jc_{j,l}(x+1)^l=\sum_{m=j-\rho+1}^jd_{j,m}x^m.
\end{equation}
Since we may assume $p$ to be large, moreover, we may suppose that $p\nmid d_{j,j-\rho+1}$. Thus,
 by multiplying the equation (\ref{3.10}) through by the multiplicative inverse of $d_{j,j-\rho+1}$ modulo
 $p^{tb+\ome a}$, we see that there is no loss in supposing that 
$d_{j,j-\rho+1}\equiv 1\mmod{p^{tb+\ome a}}$. Taking suitable linear combinations of the
 congruences comprising (\ref{3.9}), therefore, we deduce that any solution of this system satisfies
$$(\zet^{-1}p^a)^{j-\rho+1}\sum_{i=1}^r (\psi_j(y_i)-\psi_j(w_i))\equiv 0
\pmod{p^{tb+\ome a}}\quad (\rho\le j\le k),$$
in which
$$\psi_j(z)=z^{j-\rho+1}+\sum_{m=j-\rho+2}^jd_{j,m}(\zet^{-1}p^a)^{m-j+\rho-1}z^m.$$
We note for future reference that when $a\ge 1$, one has
\begin{equation}\label{3.11}
\psi_j(z)\equiv z^{j-\rho+1}\mmod{p}.
\end{equation}

\par Denote by $\calD_2(\bfu)$ the set of $\calR(tb-a)$-equivalence classes of solutions of the system
 of congruences
$$\sum_{i=1}^r \psi_j(y_i)\equiv u_j\mmod{p^{tb+\ome a-(j-\rho+1)a}}\quad (\rho\le j\le k),$$
with $1\le \bfy\le p^{tb+(\ome-1)a}$ satisfying \eqref{3.7}. Then we have shown thus far that
\begin{equation}\label{3.12}
\text{card}(\calD_1(\bfn))\le \max_{1\le \bfu\le p^{tb+\ome a}}\text{card}(\calD_2(\bfu)).
\end{equation}
Let $\calD_3(\bfv)$ denote the set of solutions of the system
\begin{equation}\label{3.13}
\sum_{i=1}^r \psi_j(y_i)\equiv v_j\mmod{p^{tb-a}}\quad (\rho\le j\le k),
\end{equation}
with $1\le \bfy\le p^{tb-a}$ satisfying \eqref{3.7}. 
For $\rho \le j\le k$, let 
\[
\tau(j)=\min \{ tb-a,tb+\omega a-(j-\rho+1)a\}.
\]
From \eqref{1.6} we see that $\tau(k)=tb+\omega a - ra \le tb - a$, and we obtain
\begin{align}
\text{card}(\calD_2(\bfu))&\le \sum_{\substack{1\le v_\rho\le p^{tb-a}\\ v_\rho\equiv
 u_\rho\mmod{p^{\tau(\rho)}}}}\cdots \sum_{\substack{1\le v_k\le p^{tb-a}\\ v_k\equiv
 u_k\mmod{p^{\tau(k)}}}}\text{card}(\calD_3(\bfv))\notag \\
&\le (p^a)^{\frac{1}{2}(r-\ome)(r-\ome-1)}\max_{1\le \bfv\le p^{tb-a}}
\text{card}(\calD_3(\bfv)).\label{3.14}
\end{align}

\par By combining (\ref{3.6}), (\ref{3.12}) and (\ref{3.14}), we discern at this point that
\begin{align}
\text{card}(\calC_{a,b}^{r,t}(\bfm;\xi,\eta))&\le (p^b)^\mu(p^a)^{(t-\rho+1)\ome +\frac{1}{2}
(r-\ome)(r-\ome-1)}\max_{1\le \bfv\le p^{tb-a}}\text{card}(\calD_3(\bfv))\notag \\
&=p^{\mu b+\nu a}\max_{1\le \bfv\le p^{tb-a}}\text{card}(\calD_3(\bfv)).\label{3.15}
\end{align}
It remains now only to bound the number of solutions of the system of congruences (\ref{3.13}) lying in
 the set $\calD_3(\bfv)$. Define the determinant
$$J(\bfpsi;\bfx)=\mathrm{det}\left( \psi'_{\rho+l-1}(x_i)\right)_{1\le i,l\le r}.$$
In view of \eqref{3.11}, one has $\psi'_{\rho+l-1}(y_i)\equiv  ly_i^{l-1}\mmod{p}$. It follows from
 \eqref{3.7} that
$$\text{det}(y_i^{l-1})_{1\le i,l\le r}=\prod_{1\le i<m\le r}(y_i-y_m)\not\equiv 0\mmod{p},$$
so that, since $p>k$, we have $(J(\bfpsi;\bfy),p)=1$.  We therefore deduce from Lemma \ref{lemma3.1}
 that
$$\text{card}(\calD_3(\bfv))\le \rho(\rho+1)\cdots k\le k!,$$
and thus the conclusion of the lemma when $a\ge 1$ follows at once from (\ref{3.2}) and (\ref{3.15}).

\par The proof presented above requires only small modifications when $a=0$. In this case, we denote by
 $\calD_1(\bfn;\eta)$ the set of $\calR(tb)$-equivalence classes of solutions of the system of congruences 
\eqref{3.5} with $1\le \bfz\le p^{kb}$ and $\bfz\equiv \bfxi\mmod{p}$ for some $\bfxi\in \Xi_0^r(0)$, 
and for which in addition $z_i\not\equiv \eta\mmod{p}$ for $1\le i\le r$. Then as in the opening 
paragraph of our proof, it follows from \eqref{3.1} that
\begin{equation}\label{3.16}
\mathrm{card}(\calC_{0,b}^{r,t}(\bfm;0,\eta))\le p^{\mu b}\max_{1\le \bfn\le p^{tb}}\mathrm{card}
(\calD_1(\bfn;\eta)).
\end{equation}
But $\text{card}(\calD_1(\bfn;\eta))=\text{card}(\calD_1(\bfn;0))$, and $\text{card}(\calD_1(\bfn;0))$ 
counts the solutions of the system of congruences
$$\sum_{i=1}^r y_i^j\equiv n_j\mmod{p^{tb}}\quad (\rho\le j\le k),$$
with $1\le \bfy\le p^{tb}$ satisfying (\ref{3.7}), and in addition $p\nmid y_i$ $(1\le i\le r)$. Write
$$J(\bfy)=\mathrm{det}\left( (\rho+j-1) y_i^{\rho+j-2}\right)_{1\le i,j\le r}.$$
Then, since $p>k$, we have 
$$J(\bfy) = \frac{k!}{(\rho-1)!}(y_1\cdots y_r)^{\rho-1}\prod_{1\le i<j\le r} (y_i-y_j) 
\not\equiv 0\mmod{p}.$$
We therefore conclude from Lemma \ref{lemma3.1} that
$$\text{card}(\calD_1(\bfn;0))\le \rho(\rho+1)\cdots k\le k!.$$
In view of (\ref{3.3}), the conclusion of the lemma therefore follows from (\ref{3.16}) when $a=0$.
\end{proof}

%%%%%%%%%%%%%%%%%%%%%%%%%%%%%%%%%%%%%%%%%%
%
\section{The conditioning process}
%
%%%%%%%%%%%%%%%%%%%%%%%%%%%%%%%%%%%%%%%%%%

We follow the previous treatments of \cite{Woo2012a} and \cite{Woo2013} in seeking next to bound the
 mean value $I_{a,b}(X;\xi,\eta)$ in terms of analogous mean values $K_{a,b+h}(X;\xi,\zeta)$, in which
 variables are arranged in ``non-singular'' blocks. We deviate from these earlier treatments, however, by
sacrificing some of the strength of these prior results in order to simplify the proofs. In particular, we are
 able in this way to avoid introducing coefficient $r$-tuples from $\{1,-1\}^r$ within the conditioned
 blocks of variables.

\begin{lemma}\label{lemma4.1} Let $a$ and $b$ be integers with $b>a\ge 1$. Then one has
$$I_{a,b}(X)\ll K_{a,b}(X)+M^{2s/3}I_{a,b+1}(X).$$
\end{lemma}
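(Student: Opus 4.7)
My plan is to implement the standard conditioning decomposition. I begin with the identity
$$\grf_b(\bfalp;\eta)=\sum_{\zet}\grf_{b+1}(\bfalp;\zet),$$
where $\zet$ runs over the $p$ residues in $[1,p^{b+1}]$ with $\zet\equiv\eta\pmod{p^b}$. Raising to the $r$th power and separating ordered tuples with pairwise distinct coordinates modulo $p^{b+1}$ from those containing a repetition yields, in view of definition \eqref{2.6},
$$\grf_b(\bfalp;\eta)^r=\grF_b(\bfalp;\eta)+E(\bfalp;\eta),$$
where $E$ collects the residual $r$-tuples containing at least one coincidence. Since $2s=2rt$, the triangle-type inequality $|u+v|^{2t}\ll|u|^{2t}+|v|^{2t}$ produces
$$|\grf_b(\bfalp;\eta)|^{2s}\ll|\grF_b(\bfalp;\eta)|^{2t}+|E(\bfalp;\eta)|^{2t}.$$
Multiplying by $|\grF_a(\bfalp;\xi)|^2$ and integrating, the first summand contributes exactly $K_{a,b}(X;\xi,\eta)\le K_{a,b}(X)$, so it remains to bound the auxiliary integral
$$T(X;\xi,\eta)=\oint|\grF_a(\bfalp;\xi)|^2|E(\bfalp;\eta)|^{2t}\d\bfalp$$
by $M^{2s/3}I_{a,b+1}(X)$.

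For $T$ I would exploit that every $r$-tuple in the support of $E$ contains a coincident pair: fixing the common value $\zet_0$ and letting the remaining $r-2$ coordinates range freely over the $p$ classes gives the pointwise majorant
$$|E(\bfalp;\eta)|\ll\sum_{\zet_0}|\grf_{b+1}(\bfalp;\zet_0)|^2|\grf_b(\bfalp;\eta)|^{r-2}.$$
Raising to the $2t$th power, applying the power mean inequality in $\zet_0$ to produce a factor $p^{2t-1}\asymp M^{2t-1}$, and then invoking H\"older's inequality on the resulting integral with weights chosen so that the exponential sums indexed by $\zet_0$ rebuild the $2s$th power needed for $I_{a,b+1}(X;\xi,\zet_0)$ and those indexed by $\eta$ rebuild the $2s$th power needed for $I_{a,b}(X;\xi,\eta)$, I arrive at an estimate of the shape
$$T(X;\xi,\eta)\ll M^{\gamma}\,I_{a,b+1}(X)^{\alpha}\,I_{a,b}(X)^{1-\alpha},$$
for suitable $\alpha\in(0,1)$ and a controlled exponent $\gamma$. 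A dichotomy according to whether $K_{a,b}(X)$ or $T(X;\xi,\eta)$ furnishes the dominant contribution to $I_{a,b}(X;\xi,\eta)$, followed by dividing through by $I_{a,b}(X)^{1-\alpha}$ and raising to the power $1/\alpha$, lets me absorb the self-reference and isolate $I_{a,b}$. Passing to the maximum over admissible $\xi,\eta$ in the sense of \eqref{2.11}, \eqref{2.12} then yields the conclusion.

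The principal obstacle is the careful accounting of the various factors of $p\asymp M$ — those arising from the sum over $\zet_0$, from the power mean step, and from the H\"older weights — together with the defining relation $s=rt$. These must conspire to produce exactly the exponent $\gamma$ for which the final rearrangement outputs $M^{2s/3}$, rather than the markedly weaker $M^s$ that a cruder balancing would supply. This sharpness is what enables the efficient-congruencing iteration in subsequent sections to propagate inductively, so securing precisely $2s/3$ (and not a larger exponent) is the crux of the lemma.
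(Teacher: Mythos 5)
Your first step is structurally sound: the identity $\grf_b(\bfalp;\eta)^r=\grF_b(\bfalp;\eta)+E(\bfalp;\eta)$, together with $|u+v|^{2t}\ll_t|u|^{2t}+|v|^{2t}$, does deliver $K_{a,b}(X)$ from the $|\grF_b|^{2t}$ piece. The gap is in the error term $T$, and it is not a bookkeeping issue that ``careful accounting'' can repair: the structure of your decomposition forces the weaker exponent $M^s$. Concretely, your pointwise bound $|E|\ll\sum_{\zet_0}|\grf_{b+1}(\bfalp;\zet_0)|^2|\grf_b(\bfalp;\eta)|^{r-2}$ carries only a \emph{pair} coincidence; after the power-mean step you obtain an integrand containing $|\grf_{b+1}(\bfalp;\zet_0)|^{4t}$, so the H\"older split you describe produces $I_{a,b+1}(X)^{\alpha}I_{a,b}(X)^{1-\alpha}$ with $\alpha=4t/(2s)=2/r$, while the $M$-factors from the power mean and the $\zet_0$-sum total $M^{2t}$. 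Dividing by $I_{a,b}(X)^{1-2/r}$ and raising to the power $r/2$ yields $I_{a,b}(X)\ll M^{rt}I_{a,b+1}(X)=M^sI_{a,b+1}(X)$, precisely the ``markedly weaker $M^s$'' you flag. No choice of weights rescues this, because the exponent $4t$ on $\grf_{b+1}$ is baked in at the moment you force the first $r$ coordinates of the $s$-tuple to be the ones inspected for a repetition.

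The missing idea is to classify solutions by the number of distinct residue classes modulo $p^{b+1}$ occupied by \emph{all} $s$ of the variables $v_1,\ldots,v_s$, not just the first $r$. Writing $I_{a,b}(X;\xi,\eta)=T_1+T_2$ with $T_1$ the count where $\ge r$ classes are hit and $T_2$ where $\le r-1$ are hit, the $T_1$ piece still yields $K_{a,b}(X;\xi,\eta)^{1/(2t)}I_{a,b}(X;\xi,\eta)^{1-1/(2t)}$ by extracting $r$ variables in distinct classes. The real gain is in $T_2$: since the hypotheses \eqref{1.6} give $s=rt\ge 2r>2(r-1)$, pigeonhole forces at least \emph{three} of the $v_i$ into a single class $\zeta$, so the integrand carries $\grf_{b+1}(\bfalp;\zeta)^3\grf_b(\bfalp;\eta)^{s-3}\grf_b(-\bfalp;\eta)^s$. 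H\"older now produces $I_{a,b+1}(X)^{3/(2s)}I_{a,b}(X;\xi,\eta)^{1-3/(2s)}$ with a single $M$ from the $\zeta$-sum, and clearing the $I_{a,b}$-power gives exactly $M^{2s/3}$. The triple coincidence, not the pair, is the crux; your algebraic decomposition discards the information that the remaining $s-r$ variables must crowd into the same $\le r-1$ classes, and with it the possibility of the sharper exponent.
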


\begin{proof} Fix integers $\xi$ and $\eta$ with $\eta\not\equiv \xi\mmod{p}$. Let $T_1$ denote the
 number of solutions $\bfx$, $\bfy$, $\bfv$, $\bfw$ of the system \eqref{2.9} counted by 
$I_{a,b}(X;\xi,\eta)$ in which $v_1,\ldots,v_s$ together occupy at least $r$ distinct residue classes
 modulo $p^{b+1}$, and let $T_2$ denote the corresponding number of solutions in which 
$v_1,\ldots,v_s$ together occupy at most $r-1$ distinct residue classes modulo $p^{b+1}$. Then
\begin{equation}\label{4.1}
I_{a,b}(X;\xi,\eta)=T_1+T_2.
\end{equation}

\par We first estimate $T_1$.  Recall the definitions (\ref{2.6}), (\ref{2.7}) and (\ref{2.8}). Then by
 orthogonality and \RH, one finds that
\begin{align}
T_1&\le \binom{s}{r} \oint \left| \grF_a(\bfalp;\xi) \right|^2 \grF_b(\bfalp;\eta) \grf_b(\bfalp;\eta)^{s-r}
 \grf_b(-\bfalp;\eta)^s \d \bfalp \notag \\
&\ll \(K_{a,b}(X;\xi,\eta)\)^{1/(2t)} \(I_{a,b}(X;\xi,\eta)\)^{1-1/(2t)}.\label{4.2}
\end{align}
Next, we estimate $T_2$.  In view of the assumptions \eqref{1.6}, one has $s=rt\ge 2r>2(r-1)$.
 Consequently, there is an integer $\zeta\equiv \eta\pmod{p^b}$ having the property that at least three of
 the variables $v_1,\ldots,v_s$ are congruent to $\zeta$ modulo $p^{b+1}$. Hence, again recalling the
 definitions (\ref{2.7}) and (\ref{2.8}), one finds by orthogonality in combination with \RH that
\begin{align}
T_2 &\le \binom{s}{3} \sum_{\substack{1\le \zeta\le p^{b+1}\\ \zeta\equiv\eta\pmod{p^b}}}\oint 
|\grF_a(\bfalp;\xi)|^2 \grf_{b+1}(\bfalp;\zeta)^3 \grf_b(\bfalp;\eta)^{s-3}\grf_b(-\bfalp;\eta)^s\d\bfalp
 \notag \\
&\ll M\(I_{a,b}(X;\xi,\eta)\)^{1-3/(2s)} \( I_{a,b+1}(X) \)^{3/(2s)}.\label{4.3}
\end{align}

\par By substituting (\ref{4.2}) and (\ref{4.3}) into (\ref{4.1}), and recalling (\ref{2.11}) and
 (\ref{2.12}), we therefore conclude that
\begin{align*}
I_{a,b}(X)\ll &\, (K_{a,b}(X))^{1/(2t)}(I_{a,b}(X))^{1-1/(2t)}\\
&\, +M(I_{a,b}(X))^{1-3/(2s)}(I_{a,b+1}(X))^{3/(2s)},
\end{align*}
whence
$$I_{a,b}(X)\ll K_{a,b}(X)+M^{2s/3}I_{a,b+1}(X).$$
This completes the proof of the lemma.
\end{proof}

Repeated application of Lemma \ref{lemma4.1}, together with a trivial bound for the mean value
 $K_{a,b+H}(X)$ when $H$ is large enough, yields a relation suitable for iterating the efficient
 congruencing process.

\begin{lemma}\label{lemma4.2} Let $a$ and $b$ be integers with $1\le a<b$, and put $H=15(b-a)$.
 Suppose that $b+H\le (2\tet)^{-1}$. Then there exists an integer $h$ with $0\le h<H$ having the
 property that
$$I_{a,b}(X)\ll (M^h)^{2s/3}K_{a,b+h}(X)+(M^H)^{-s/4} (X/M^b)^{2s}(X/M^a)^{\lam-2s}.$$
\end{lemma}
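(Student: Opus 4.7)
The plan is to iterate Lemma~\ref{lemma4.1} $H$ times and to dispose of the resulting tail $I_{a,b+H}(X)$ via a trivial bound extracted from H\"older's inequality and the Vinogradov estimate \eqref{2.4}. Applying Lemma~\ref{lemma4.1} with $b$ replaced by $b+h$ for $h=0,1,\ldots,H-1$---valid since $b+h<b+H\le (2\tet)^{-1}$---and iterating the resulting recursion yields
\[
I_{a,b}(X)\ll \sum_{h=0}^{H-1} M^{2sh/3}K_{a,b+h}(X)+M^{2sH/3}I_{a,b+H}(X).
\]
The hypothesis $b+H\le (2\tet)^{-1}$ bounds $H$ in terms of the fixed parameters $t,N$, so the sum has boundedly many terms. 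Choosing $h\in\{0,\ldots,H-1\}$ that maximises $M^{2sh/3}K_{a,b+h}(X)$ allows me to absorb the factor $H$ into the implicit constant, reducing matters to proving $M^{2sH/3}I_{a,b+H}(X)\ll (M^H)^{-s/4}(X/M^b)^{2s}(X/M^a)^{\lam-2s}$.

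To establish such a trivial bound I apply H\"older's inequality to the defining integral of $I_{a,b+H}(X;\xi,\eta)$ with conjugate exponents $(s+r)/r$ and $(s+r)/s$:
\[
I_{a,b+H}(X;\xi,\eta)\le \left(\oint |\grF_a(\bfalp;\xi)|^{2(t+1)}\d\bfalp\right)^{r/(s+r)} \left(\oint |\grf_{b+H}(\bfalp;\eta)|^{2(s+r)}\d\bfalp\right)^{s/(s+r)}.
\]
The integrality of $(s+r)/r=t+1$ (ensured by $s=rt$) is what turns the first factor into a legitimate integer moment. Both integrals are then majorised by $J_{s+r}$: the second directly by Lemma~\ref{lemma2.1}, and the first by the parallel translation-dilation substitution $x_i=\xi+p^a u_i$, whose triangular effect on the power sums identifies $\oint |\grF_a|^{2(t+1)}\d\bfalp$ with a count of solutions of the Vinogradov system in $2(s+r)$ variables from $[1,X/p^a]$, hence bounded above by $J_{s+r}(X/M^a)$. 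Since $a<b+H\le (2\tet)^{-1}$, both $X/M^a$ and $X/M^{b+H}$ exceed $X^{1/2}$, so \eqref{2.4} yields
\[
I_{a,b+H}(X)\ll (X/M^a)^{r(\lam+\del)/(s+r)}(X/M^{b+H})^{s(\lam+\del)/(s+r)}.
\]

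Multiplying by $M^{2sH/3}$ and comparing $M$-exponents with those of the target bound, a direct computation using $H=15(b-a)$ and $b+H-a=16(b-a)$ reduces the desired inequality to $\lam/(s+r)\ge 63/64$, which is immediate from the lower bound $\lam\ge s+r$ supplied by \eqref{2.1} (applied with $s+r$ in place of $s$). The $X^{O(\del)}$ slack arising from \eqref{2.4} is comfortably absorbed into the substantial negative power of $M$ generated on the left (the full $M$-exponent of our bound is $2sH/3-$ something exceeding $11sH/12$, leaving slack $2sH/3$ above the target's $-sH/4$), using $\del\ll\tet$ from \eqref{2.2}. The main obstacle is the derivation of this trivial bound: a more direct estimate via $|\grf_{b+H}|\le X/M^{b+H}$ combined with the Vi\`ete--Girard--Newton control of $\oint|\grF_a|^2\d\bfalp$ proves insufficient whenever $\lam<2s+r$, and the H\"older split exploiting the integrality of $(s+r)/r$ is what unlocks the $J_{s+r}$ bound and thereby access to the lower bound $\lam\ge s+r$.
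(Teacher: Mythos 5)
Your proof is correct and follows essentially the same route as the paper: iterate Lemma~\ref{lemma4.1}, then bound the tail $I_{a,b+H}(X)$ via H\"older and Lemma~\ref{lemma2.1} to reach $(J_{s+r}(X/M^a))^{r/(s+r)}(J_{s+r}(X/M^{b+H}))^{s/(s+r)}$, apply \eqref{2.4}, and close with the exponent comparison (your reduction to $\lam/(s+r)\ge 63/64$ agrees exactly with the paper's computation of $\Ome\le -\tfrac{4}{15}Hs\le -\del\tet^{-1}-\tfrac14 Hs$). The only cosmetic deviation is that you apply H\"older directly to $|\grF_a|^2|\grf_{b+H}|^{2s}$ with exponents $(t+1,(s+r)/s)$ rather than first passing from $|\grF_a|^2$ to $|\grf_a|^{2r}$ by comparing the underlying Diophantine systems and then invoking Corollary~\ref{corollary2.2}; both yield the identical bound.
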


\begin{proof}
By repeated application of Lemma \ref{lemma4.1}, we derive the upper bound
\be\label{4.4}
I_{a,b}(X)\ll \sum_{h=0}^{H-1} (M^h)^{2s/3}K_{a,b+h}(X)+(M^H)^{2s/3}I_{a,b+H}(X).
\ee
On considering the underlying Diophantine systems, it follows from Corollary \ref{corollary2.2} that
\begin{align*}
I_{a,b+H}(X;\xi,\eta)&\le \oint |\grf_a(\bfalp;\xi)^{2r}\grf_{b+H}(\bfalp;\eta)^{2s}|\d\bfalp \\
 &\ll (J_{s+r}(X/M^a))^{r/(s+r)}(J_{s+r}(X/M^{b+H}))^{s/(s+r)}.
\end{align*}

\par Since $M^{b+H} = (X^\theta)^{b+H}\le X^{1/2}$, we deduce from
 \eqref{2.4} that
\begin{align*}
(M^H)^{2s/3}I_{a,b+H}(X)&\ll X^\del \left( (X/M^a)^{r/(s+r)} (X/M^{b+H})^{s/(s+r)}\right)^{\lam}
(M^H)^{2s/3}\\
&=X^\del (X/M^b)^{2s}(X/M^a)^{\lam-2s}M^\Ome,
\end{align*}
where
$$\Ome=\lam \(a - \frac{ar}{s+r} - \frac{bs}{s+r}\)+2s(b-a)+Hs\(\frac23-\frac{\lam}{s+r}\).$$
We recall from (\ref{2.1}) that $\lam\ge s+r$. Then the lower bound $b\ge a$ leads to the estimate
$$\Ome\le -s(b-a)\frac{\lam}{s+r}+2s(b-a)-\tfrac{1}{3}Hs\le s(b-a)-\tfrac{1}{3}Hs.$$
But $H=15(b-a)$, and so from (\ref{2.2}) we discern that
$$\Ome\le -\tfrac{4}{15}Hs\le -\del\tet^{-1}-\tfrac{1}{4}Hs.$$
We therefore arrive at the estimate
$$(M^H)^{2s/3}I_{a,b+H}(X)\ll (M^H)^{-s/4}(X/M^b)^{2s}(X/M^a)^{\lam-2s},$$
and the conclusion of the lemma follows on substituting this bound into \eqref{4.4}.
\end{proof}

%%%%%%%%%%%%%%%%%%%%%%%%%%%%%%%%%%%%%%%%%%%%%%%%
\section{The efficient congruencing step} 
%%%%%%%%%%%%%%%%%%%%%%%%%%%%%%%%%%%%%%%%%%%%%%%%%

We next seek to convert latent congruence information within the mean value $K_{a,b}(X)$ into a form
 useful in subsequent iterations, this being achieved by using the work of \S3. We recall now the definitions 
of the coefficients $\mu$ and $\nu$ from (\ref{3.4}). The following generalises Lemmata 5.1, 5.2, 6.2 and
 6.3 of \cite{Woo2013}.

\begin{lemma}\label{lemma5.1} Suppose that $a$ and $b$ are integers with $0\le a<b\le \tet^{-1}$ and 
$b\ge (k-t-1)a$. Then one has
$$K_{a,b}(X)\ll M^{\mu b+\nu a}(M^{tb-a})^r \(J_{s+r}(X/M^b)\)^{1-1/t}\(I_{b,tb}(X)\)^{1/t}.$$
\end{lemma}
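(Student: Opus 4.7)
The plan is to generalize the efficient congruencing steps of Lemmata 5.1, 5.2, 6.2 and 6.3 of \cite{Woo2013} to the present parameter regime \eqref{1.6}. In broad strokes, I upgrade the level-$a$ conditioning in $\grF_a(\bfalp;\xi)$ all the way to level $tb$, using Lemma \ref{lemma3.3} to control the combinatorial sum that arises, and then apply \RH twice to separate the resulting mixed mean value into an $I_{b,tb}$ factor and a $J_{s+r}(X/M^b)$ factor.

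First, I refine $\grF_a(\bfalp;\xi)$ by writing each $\grf_{a+1}(\bfalp;\eta'_i)$ as a sum of $\grf_{tb}(\bfalp;g_i)$ over residues $g_i\in[1,p^{tb}]$ with $g_i\equiv\eta'_i\pmod{p^{a+1}}$. This yields
$$\grF_a(\bfalp;\xi)=\sum_{\bfg\in\widetilde\Xi}\prod_{i=1}^r\grf_{tb}(\bfalp;g_i),$$
where $\widetilde\Xi$ denotes the set of $r$-tuples $\bfg$ with $g_i\in[1,p^{tb}]$, $g_i\equiv\xi\pmod{p^a}$, and $g_1,\ldots,g_r$ distinct modulo $p^{a+1}$; hence $|\widetilde\Xi|\ll(M^{tb-a})^r$. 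Squaring, $K_{a,b}(X;\xi,\eta)$ becomes a double sum over pairs $(\bfg,\bfh)\in\widetilde\Xi^2$ of integrals. The underlying Diophantine system \eqref{2.10}, after shifting all variables by $\eta$ and reducing modulo $p^{jb}$, forces the $\bfv_l,\bfw_l$ contributions to vanish (since $v_{lm},w_{lm}\equiv\eta\pmod{p^b}$). Hence the shifted tuples $\bfx-\eta$ and $\bfy-\eta$ each satisfy \eqref{3.1} with a common right-hand side $\bfm$; by Lemma \ref{lemma3.3}, for each fixed $\bfg$ the number of compatible $\bfh$ is at most $B_{a,b}^{r,t}(p)\le k!\,p^{\mu b+\nu a}$. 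Applying Cauchy-Schwarz to each contributing pair then yields
$$K_{a,b}(X;\xi,\eta)\ll M^{r(tb-a)+\mu b+\nu a}\max_{\bfg}\oint\prod_{i=1}^r|\grf_{tb}(\bfalp;g_i)|^2\,|\grF_b(\bfalp;\eta)|^{2t}\d\bfalp.$$

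To finish, I apply \RH twice. The first application, with $r$ equal exponents on the product of $\grf_{tb}$'s, reduces the maximum to $\max_i\oint|\grf_{tb}(\bfalp;g_i)|^{2r}|\grF_b|^{2t}\d\bfalp$. The second application, using the identity $|\grF_b|^{2t}|\grf_{tb}|^{2r}=\bigl(|\grF_b|^2|\grf_{tb}|^{2s}\bigr)^{1/t}\bigl(|\grF_b|^{2(t+1)}\bigr)^{(t-1)/t}$ (valid since $s=rt$) with conjugate exponents $t,t/(t-1)$, splits the integral into $I_{b,tb}(X;\eta,g)^{1/t}\le I_{b,tb}(X)^{1/t}$ (by definition \eqref{2.7}) and $\bigl(\oint|\grF_b(\bfalp;\eta)|^{2(t+1)}\d\bfalp\bigr)^{(t-1)/t}$. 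The latter is bounded by $J_{s+r}(X/M^b)^{(t-1)/t}$ via translation-dilation invariance and Lemma \ref{lemma2.1}, since the integrand counts solutions of a Vinogradov system in $2r(t+1)=2(s+r)$ variables all $\equiv\eta\pmod{p^b}$, and dropping the distinctness within blocks only enlarges the count. The case $a=0$ proceeds analogously, via the auxiliary $\grF(\bfalp;\eta)$ from \eqref{2.13} and the variant bound $B_{0,b}^{r,t}(p)$ of Lemma \ref{lemma3.3}. The main obstacle is the congruence-counting: the saving built into $B_{a,b}^{r,t}(p)$ is essential, since without it the pair count would be $|\widetilde\Xi|^2\gg M^{2r(tb-a)}$, far larger than what appears in the claim.
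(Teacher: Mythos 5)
Your overall strategy is the right one — refine the level-$a$ conditioning to level $tb$, apply Cauchy--Schwarz with the count from Lemma \ref{lemma3.3}, then split by a double application of H\"older — and your two H\"older computations at the end are correct and match what the paper does. The gap is in the Cauchy--Schwarz step, specifically in the assertion that ``for each fixed $\bfg$ the number of compatible $\bfh$ is at most $B_{a,b}^{r,t}(p)$, by Lemma \ref{lemma3.3}.'' Lemma \ref{lemma3.3} bounds $\text{card}(\calC_{a,b}^{r,t}(\bfm;\xi,\eta))$ for a \emph{fixed} $k$-tuple $\bfm$. But the set of $\bfh$ pairing non-trivially with a given $\bfg$ is a union over many $\bfm$: since $\bfg$ lives only modulo $p^{tb}$, it determines $m_j$ merely modulo $p^{\min(jb,tb)}$, so for $j>t$ different lifts $\bfx$ of $\bfg$ to $[1,p^{kb}]^r$ produce different tuples $\bfm$. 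The lemma gives at most $B_{a,b}^{r,t}(p)$ equivalence classes \emph{per such} $\bfm$, not for the union, and nothing in your proposal shows these class-sets coincide for the various $\bfm$. So the degree bound you feed into Cauchy--Schwarz is unjustified as stated.

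The paper avoids exactly this by keeping $\bfm$ explicit: it writes $K_{a,b}(X;\xi,\eta)$ first as a sum over $\bfm$ of integrals of $|\grG_{a,b}(\bfalp;\xi,\eta;\bfm)|^2\,|\grF_b(\bfalp;\eta)|^{2t}$, applies Cauchy--Schwarz \emph{within each $\bfm$-stratum} over the classes $C\in\calC_{a,b}^{r,t}(\bfm;\xi,\eta)$ (which is where Lemma \ref{lemma3.3} is legitimately invoked, contributing the factor $M^{\mu b+\nu a}$), and only then reassembles, using that the sets $\calB_{a,b}^r(\bfm;\xi,\eta)$ are pairwise disjoint over distinct $\bfm$ so that each pair $(\bfx,\bfy)$ corresponds to at most one pair $(\bfm,C)$. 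That disjointness is the ingredient that collapses the double sum over $(\bfm,C)$ into the single count $H$ of solutions of \eqref{2.10} with $\bfx\equiv\bfy\pmod{p^{tb}}$, after which the sum over $\bfzet$, the first H\"older (costing $(M^{tb-a})^{r-1}$ and a final $p^{tb-a}$ from the sum-to-max) and the second H\"older proceed exactly as you describe. To make your version rigorous you would need to interpose the $\bfm$-partition before Cauchy--Schwarz, at which point your argument becomes the paper's.
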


\begin{proof} Suppose first that $a\ge 1$. Consider fixed integers $\xi$ and $\eta$ with
$$1\le \xi\le p^a,\quad 1\le \eta\le p^b\quad \text{and}\quad \eta\not\equiv \xi\mmod{p}.$$
The quantity $K_{a,b}(X;\xi,\eta)$ counts
 integral solutions of the system \eqref{2.10} subject to the attendant conditions on $\bfx$, $\bfy$,
 $\bfv$, $\bfw$. As in the argument of the proof of \cite[Lemma 6.1]{Woo2012a}, an application of the
 Binomial Theorem shows that these solutions satisfy the system of congruences
\be\label{5.1}
\sum_{i=1}^r (x_i-\eta)^j\equiv \sum_{i=1}^r (y_i-\eta)^j\mmod{p^{jb}}\quad (1\le j\le k).
\ee
In the notation of \S3, it follows that for some $k$-tuple of integers $\bfm$, we have
 $[\bfx\mmod{p^{kb}}] \in \calB_{a,b}^{r}(\bfm;\xi,\eta)$ and $[\bfy\mmod{p^{kb}}] \in
 \calB_{a,b}^{r}(\bfm;\xi,\eta)$.   Writing
$$\grG_{a,b}(\bfalp;\xi,\eta;\bfm)=\sum_{\bftet \in \calB_{a,b}^{r}(\bfm;\xi,\eta)}
\prod_{i=1}^r\grf_{kb}(\bfalp;\tet_i),$$
we see from (\ref{2.10}) and (\ref{5.1}) that
$$K_{a,b}(X;\xi,\eta)=\sum_{m_{1}=1}^{p^{b}}\ldots \sum_{m_k=1}^{p^{kb}}\oint |\grG_{a,b} 
(\bfalp;\xi,\eta;\bfm)^2\grF_b(\bfalp;\eta)^{2t}|\d\bfalp .$$

\par We now partition the vectors in each set $\calB_{a,b}^{r}(\bfm;\xi,\eta)$ into equivalence
classes modulo $p^{tb}$ as in Section 3.
An application of Cauchy's inequality leads via Lemma \ref{lemma3.3} to the bound
\begin{align*}
|\grG_{a,b}(\bfalp;\xi,\eta;\bfm)|^2 &= \Bigg| \sum_{C\in \calC_{a,b}^{r,t}(\bfm;\xi,\eta)} \;
\sum_{\bftet\in C}\prod_{i=1}^r \grf_{kb}(\bfalp;\tet_i) \Bigg|^2 \\
&\le \text{card}(\calC_{a,b}^{r,t}(\bfm;\xi,\eta)) \sum_{C\in \calC_{a,b}^{r,t}(\bfm;\xi,\eta)} 
\Bigg| \sum_{\bftet\in C}\prod_{i=1}^r \grf_{kb}(\bfalp;\tet_i) \Bigg|^2\\
&\ll M^{\mu b+\nu a} \sum_{C\in \calC_{a,b}^{r,t}(\bfm;\xi,\eta)} 
\Bigg| \sum_{\bftet\in C}\prod_{i=1}^r \grf_{kb}(\bfalp;\tet_i) \Bigg|^2.
\end{align*}
Hence
$$K_{a,b}(X;\xi,\eta)\ll M^{\mu b+\nu a} \sum_\bfm \sum_{C\in \calC_{a,b}^{r,t}(\bfm;\xi,\eta)} 
\oint \Bigg| \sum_{\bftet\in C}\prod_{i=1}^r \grf_{kb}(\bfalp;\tet_i) \Bigg|^2
|\grF_b(\bfalp;\eta)|^{2t}\d\bfalp.$$
For each $k$-tuple $\bfm$ and equivalence class $C$, the integral above counts solutions 
of \eqref{2.10} with the additional constraints that $[\bfx \mmod{p^{kb}}] \in C$ and 
$[\bfy \mmod{p^{kb}}] \in C$.  In particular, $\bfx \equiv \bfy \pmod{p^{tb}}$.
Moreover, as the sets $\calB_{a,b}^{r}(\bfm;\xi,\eta)$ are disjoint for distinct vectors $\bfm$
(with $1\le m_j\le p^{jb}$ for each $j$), to each pair $(\bfx,\bfy)$ there corresponds at most 
one pair $(\bfm, C)$.  Hence, 
\[
 K_{a,b}(X;\xi,\eta)\ll M^{\mu b+\nu a} H,
\]
where $H$ is the number of solutions of \eqref{2.10} with the additional hypothesis that $\bfx\equiv \bfy
\pmod{p^{tb}}$.  It follows that
\[
 K_{a,b}(X;\xi,\eta)\ll M^{\mu b+\nu a} \ssum{1\le \bfzet \le p^{tb} \\ \bfzet \equiv \xi \pmod{p^a}} 
 \oint \Big( \prod_{i=1}^r |\grf_{tb}(\bfalp;\zet_i)|^2 \Big) |\grF_b(\bfalp;\eta)|^{2t}\d\bfalp.
\]

\par An application of \RH reveals that
\begin{align*}
\sum_{\substack{1\le \bfzet \le p^{tb}\\ \bfzet\equiv \xi\pmod{p^a}}} \prod_{i=1}^r 
|\grf_{tb}(\bfalp;\zet_i)|^2&=\Big( \sum_{\substack{1\le \zeta \le p^{tb}\\ 
\zeta\equiv \xi\pmod{p^a}} } |\grf_{tb}(\bfalp;\zet)|^2\Big)^r\\
&\le (p^{tb-a})^{r-1}\sum_{\substack{1\le \zet \le p^{tb}\\ \zet\equiv \xi\pmod{p^a}}}
 |\grf_{tb}(\bfalp;\zet)|^{2r},
\end{align*}
and so it follows that
\be\label{5.2}
K_{a,b}(X;\xi,\eta)\ll M^{\mu b+\nu a}(M^{tb-a})^r\max_{\substack{1\le \zet \le p^{tb}\\ 
\zet\equiv \xi\mmod{p^a}}}\oint |\grf_{tb}(\bfalp;\zet)^{2r}\grF_b(\bfalp;\eta)^{2t}|\d\bfalp.
\ee
Next we apply \RH to the integral on the right hand side of (\ref{5.2}) to obtain
$$\oint |\grf_{tb}(\bfalp;\zet)^{2r}\grF_b (\bfalp;\eta)^{2t}|\d\bfalp \le U^{1-1/t}\(I_{b,tb}(X;\eta,
\zet)\)^{1/t},$$
where, on considering the underlying Diophantine system and using Lemma \ref{lemma2.1}, one has
$$U=\oint |\grF_b(\bfalp;\eta)|^{2t+2}\d\bfalp \le \oint |\grf_b(\bfalp;\eta)|^{2s+2r}\d\bfalp
\ll J_{s+r}(X/M^b).$$
Notice that since $\eta\not\equiv \xi\mmod{p}$ and $\zet\equiv \xi\mmod{p^a}$ with $a\ge 1$, one has
 $\zet\not\equiv \eta\mmod{p}$. Then we have $I_{b,tb}(X;\eta,\zet)\le I_{b,tb}(X)$, and so when 
$a\ge 1$ the conclusion of the lemma follows from (\ref{5.2}).

When $a=0$, we must modify the argument slightly. In this case, from \eqref{2.15} and \eqref{2.16} we
 find that
$$K_{0,b}(X)=\max_{1\le \eta\le p^b}\oint |\grF(\bfalp;\eta)^2\grF_b (\bfalp;\eta)^{2t}|\d\bfalp .$$
The desired conclusion then follows by pursuing the proof given above in the case $a\ge 1$, noting that
 the definition of $\grF(\bfalp;\eta)$ ensures that the variables resulting from the congruencing argument
 will avoid the congruence class $\eta$ modulo $p$. This completes the proof of the lemma.
\end{proof}

By applying Lemmata \ref{lemma4.2} and \ref{lemma5.1} in tandem, we obtain a sequence of inequalities
 for the quantities $K_{c,d}(X)$. Recall the definition of $\zero$ from (\ref{2.21}).

\begin{lemma}\label{lemma5.2}
Suppose that $a$ and $b$ are integers with $0\le a<b\le (32t\tet)^{-1}$ and $b\ge ta$. In addition, put
 $H=15(t-1)b$ and $g=b-ta$. Then there exists an integer $h$, with $0\le h<H$, having the property that
\begin{align*}
\llbracket K_{a,b}(X)\rrbracket \ll &\,X^\del \Big( (M^{g-4h/3})^s\llbracket K_{b,tb+h}(X)\rrbracket
 \Big)^{1/t}(X/M^b)^{\zero (1-1/t)}\\
&\,+(M^H)^{-r/6}(X/M^b)^\zero.
\end{align*}
\end{lemma}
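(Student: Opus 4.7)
The plan is to chain Lemmas \ref{lemma5.1} and \ref{lemma4.2} in that order, then normalise and compare exponents. Applying Lemma \ref{lemma5.1} -- whose hypotheses $b\ge (k-t-1)a$ and $b\le \theta^{-1}$ follow respectively from $b\ge ta$ together with $k-t-1\le t$ (a consequence of $t\ge (k-1)/2$), and from $b\le (32t\theta)^{-1}$ -- supplies
\[
K_{a,b}(X)\ll M^{\mu b+\nu a}(M^{tb-a})^r\bigl(J_{s+r}(X/M^b)\bigr)^{1-1/t}\bigl(I_{b,tb}(X)\bigr)^{1/t}.
\]
Since $b\theta\le 1/(32t)\le 1/2$ one has $X/M^b\ge X^{1/2}$, so \eqref{2.4} permits the replacement $J_{s+r}(X/M^b)\ll (X/M^b)^{\lambda+\delta}$. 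Next I apply Lemma \ref{lemma4.2} to $I_{b,tb}(X)$ with $H=15(tb-b)=15(t-1)b$; its hypothesis reduces to $(16t-15)b\le (2\theta)^{-1}$, which again follows from the bound on $b$. This produces an integer $h\in[0,H)$ with
\[
I_{b,tb}(X)\ll (M^h)^{2s/3}K_{b,tb+h}(X)+(M^H)^{-s/4}(X/M^{tb})^{2s}(X/M^b)^{\lambda-2s}.
\]
The bound $(A+B)^{1/t}\le A^{1/t}+B^{1/t}$ (valid since $t\ge 2$) then splits the combined estimate into a principal part involving $K_{b,tb+h}(X)$ and an error part. Passing to normalised variables via \eqref{2.17}--\eqref{2.19} and \eqref{2.21}, the target reduces to verifying two inequalities between $M$-exponents.

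For the principal term, collecting powers and using $s=rt$ shows that the surplus $M$-exponent simplifies to $b(\mu+\kappa-s-r)+a(\nu+r+s-\kappa)$. Substituting the formulae \eqref{3.4} for $\mu,\nu$ and the definition \eqref{1.7} of $\kappa$ yields the key identity
\[
b(\mu+\kappa-s-r)+a(\nu+r+s-\kappa)=\frac{(t+r-k)(r-1)}{t-1}(at-b),
\]
which is non-positive because $b\ge ta$ and each remaining factor is non-negative under \eqref{1.6}. The $X^\delta$ slack on the right-hand side of the target comfortably absorbs the $(X/M^b)^{\delta(1-1/t)}$ factor that arose from invoking \eqref{2.4}.

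For the error term, an analogous calculation reduces the surplus $M$-exponent to $(b-at)(\mu+\kappa-s)-Hr/12$. The identity $\mu+\kappa-s=r-(t+r-k)(r-1)/(t-1)\le r$, together with the abbreviation $g=b-at$, bounds this above by $rg-Hr/12$. Since $H/12=15(t-1)b/12\ge 5b/4>g$ for $t\ge 2$, this is at most $-rb/4$, which easily absorbs the residual $X^{\delta(1-1/t)}=M^{O(\delta/\theta)}$ contribution in view of the choices in \eqref{2.2}. The principal obstacle is the algebraic cancellation in the main-term identity: the shape of $\kappa$ in \eqref{1.7} is engineered precisely so that this identity holds, and it is this cancellation that allows the efficient congruencing iteration to close; the remaining manipulations are routine bookkeeping.
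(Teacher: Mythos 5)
Your proof is correct and follows essentially the same route as the paper: apply Lemma \ref{lemma5.1}, feed the result into Lemma \ref{lemma4.2}, pass to normalised quantities, and verify via the algebraic identity built into the definition of $\kappa$ that the exponents close. The only cosmetic difference is that you compute the surplus $\omega - rg$ directly rather than first showing $\omega = (r - (t+r-k)(r-1)/(t-1))g \le rg$ as the paper does; both reduce to the same cancellation.
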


\begin{proof} Recall the notational conventions \eqref{2.18} and \eqref{2.19}. The hypotheses $b\ge ta$
 and \eqref{1.6} imply that $b\ge (k-t-1)a$. Then it follows from Lemma \ref{lemma5.1} in combination 
with \eqref{2.4} that
\begin{equation}\label{5.3}
\llbracket K_{a,b}(X)\rrbracket \ll X^\del M^\ome \llbracket I_{b,tb}(X)\rrbracket^{1/t}
(X/M^b)^{\zero (1-1/t)},
\end{equation}
in which we have written
$$\ome=\mu b+\nu a+r(tb-a)+(2r-\kap)(a-b)-2s(t-1)b/t.$$
On recalling that $s=rt$ and noting the definition \eqref{1.7} of $\kap$, one finds that
\begin{align*}
\ome=\kap(b-a)-&(rt-\tfrac{1}{2}(t+r-k)(t+r-k-1))b\\
&+(r+\tfrac{1}{2}(t+r-k)(k+r-t-1))a,
\end{align*}
whence
$$\ome=\left( r-\frac{(t+r-k)(r-1)}{t-1}\right) (b-ta)\le rg.$$
The hypothesized upper bound on $b$ implies that $tb+H \le 16tb\le (2\tet)^{-1}$. We may therefore
 apply Lemma \ref{lemma4.2} to show that for some integer $h$ with $0\le h < H$, one has
$$\llbracket I_{b,tb}(X)\rrbracket \ll (M^h)^{-4s/3}\llbracket K_{b,tb+h}(X)\rrbracket 
+(M^H)^{-s/4}(X/M^b)^\zero .$$
We therefore deduce from (\ref{5.3}) that
\begin{align}
\llbracket K_{a,b}(X)\rrbracket \ll &\, X^\del (X/M^b)^{\zero(1-1/t)}M^{rg-4rh/3}\llbracket
 K_{b,tb+h}(X)\rrbracket^{1/t}\notag \\
&\, +X^\del M^{rg-rH/4}(X/M^b)^\zero .\label{5.4}
\end{align}
But in view of the hypotheses \eqref{1.6}, one has $t\ge 2$ and hence
$$H=15(t-1)b\ge 15b\ge 15g.$$
Then on recalling (\ref{2.2}), we find that
$$X^\del (M^r)^{g-H/4}\le M^{\del \tet^{-1}}(M^{rH})^{1/15-1/4}\le M^{-rH/6}.$$
The conclusion of the lemma therefore follows from (\ref{5.4}).
\end{proof}

The following crude upper bound for $K_{a,b}(X)$ is a useful addition to our arsenal when $b$ is very
 large.

\begin{lemma}\label{lemma5.3}
Suppose that $a$ and $b$ are integers with $0\le a<b\le (2\tet)^{-1}$. Then provided that $\zero\ge 0$,
 one has
$$\llbracket K_{a,b}(X)\rrbracket \ll X^{\zero+\del}(M^{b-a})^s.$$
\end{lemma}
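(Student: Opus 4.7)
The plan is to apply H\"older's inequality with conjugate exponents $t+1$ and $(t+1)/t$ to the integrand $|\grF_a(\bfalp;\xi)|^2|\grF_b(\bfalp;\eta)|^{2t}$, thereby decoupling the two blocks into separate $2(t+1)$-th moments, and then to dominate each moment by $J_{s+r}$ using the global hypothesis \eqref{2.4}. Concretely, H\"older gives
\[
K_{a,b}(X;\xi,\eta)\le \Big(\oint|\grF_a(\bfalp;\xi)|^{2(t+1)}\d\bfalp\Big)^{1/(t+1)}\Big(\oint|\grF_b(\bfalp;\eta)|^{2(t+1)}\d\bfalp\Big)^{t/(t+1)}.
\]

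The key intermediate estimate is that, for any integer $\zet$ and any $c\in\{a,b\}$, one has
\[
\oint|\grF_c(\bfalp;\zet)|^{2(t+1)}\d\bfalp\le \oint|\grf_c(\bfalp;\zet)|^{2(s+r)}\d\bfalp\ll J_{s+r}(X/M^c).
\]
The first inequality is a direct orthogonality observation: both integrals count solutions of the Vinogradov system of order $s+r=r(t+1)$ in integer variables lying in the residue class of $\zet$ modulo $p^c$, but the left-hand side imposes the additional constraint that in each of the $2(t+1)$ blocks of $r$ variables the residues modulo $p^{c+1}$ be distinct. The second inequality is Lemma~\ref{lemma2.1}, whose hypothesis $c\tet\le 1$ holds because $c\le b\le(2\tet)^{-1}$. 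Since also $X/M^c\ge X/M^b\ge X^{1/2}$, the bound \eqref{2.4} upgrades this to $\oint|\grF_c(\bfalp;\zet)|^{2(t+1)}\d\bfalp\ll(X/M^c)^{\lam+\del}$.

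Substituting back, taking the maximum over $\xi,\eta$, and normalising by dividing through by $(X/M^b)^{2s}(X/M^a)^{2r-\kap}$, a short computation using $\lam=\zero+2(s+r)-\kap$ and $s+r=r(t+1)$ reduces the bound to
\[
\llbracket K_{a,b}(X)\rrbracket\ll X^{\zero+\del}\,M^{\kap t(b-a)/(t+1)-(\zero+\del)(a+bt)/(t+1)}.
\]
Since $\zero\ge 0$ the second term in the $M$-exponent is non-positive and may be discarded, leaving $(M^{b-a})^{\kap t/(t+1)}$. The main point of the lemma is then the inequality $\kap t/(t+1)\le s$: from \eqref{1.7}, under \eqref{1.6} the subtracted quantity $\tfrac12(t+r-k)(t+r-k-1+(2r-2)/(t-1))$ is non-negative (both factors are non-negative when $t+r>k$, while the whole product vanishes when $t+r=k$), so $\kap\le r(t+1)$ and hence $\kap t/(t+1)\le rt=s$. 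Combined with $M^{b-a}\ge 1$, this yields the claimed bound. The case $a=0$ is identical, with $\grF_a(\bfalp;\xi)$ replaced by $\grF(\bfalp;\eta)$ from \eqref{2.13}; the extra condition $\xi_i\not\equiv\eta\pmod{p}$ only strengthens the Diophantine majorisation, and the instance $c=0$ of \eqref{2.4} applies since $Y=X\ge X^{1/2}$.
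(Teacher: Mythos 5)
Your proof is correct and takes essentially the same route as the paper's: you apply H\"older first and then majorise each factor by a moment of $\grf_c$, whereas the paper first majorises $K_{a,b}(X)$ by $\oint|\grf_a(\bfalp;\xi)^{2r}\grf_b(\bfalp;\eta)^{2s}|\d\bfalp$ and then invokes Corollary~\ref{corollary2.2} (which is H\"older plus Lemma~\ref{lemma2.1}). Since $s+r=r(t+1)$, your exponents $1/(t+1)$ and $t/(t+1)$ coincide with the paper's $r/(s+r)$ and $s/(s+r)$, and the final reduction via $\kap\le s+r$ is identical.
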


\begin{proof} 
On considering the underlying Diophantine equations, we deduce from Corollary \ref{corollary2.2} that
$$K_{a,b}(X)\ll (J_{s+r}(X/M^a))^{r/(s+r)}(J_{s+r}(X/M^b))^{s/(s+r)},$$
so that \eqref{2.4}, (\ref{2.17}), (\ref{2.19}) and \eqref{2.21} yield the relation
\begin{align*}
\llbracket K_{a,b}(X)\rrbracket&\ll \frac{X^\del \left( (X/M^a)^{r/(s+r)}
(X/M^b)^{s/(s+r)}\right)^{2s+2r-\kap+\zero}}{(X/M^b)^{2s}(X/M^a)^{2r-\kap}}\\
&\le X^{\zero+\del}(M^{b-a})^{\kap s/(s+r)}.
\end{align*}
In view of (\ref{1.7}), one has $\kap \le s+r$, and thus the proof of the lemma is complete.
\end{proof}

%%%%%%%%%%%%%%%%%%%%%%%%%%%%%%%%%%%%%%%%%%%%
%
\section{The pre-congruencing step}
%
%%%%%%%%%%%%%%%%%%%%%%%%%%%%%%%%%%%%%%%%%%%%

In order to ensure that the variables in the auxiliary mean values that we consider are appropriately
 configured, we must expend some additional effort initiating the iteration in a pre-congruencing step. It is
 at this point that we fix the prime $p$ once and for all. Although we follow the argument of 
\cite[Lemma 6.1]{Woo2013} in broad strokes, we are able to obtain some simplification by weakening our
 conclusions inconsequentially.

\begin{lemma}\label{lemma6.1}
There exists a prime number $p$ with $M<p\le 2M$, and an integer $h$ with $h\in\{0,1,2,3\}$, for which
 one has
$$J_{s+r}(X)\ll M^{2s+2sh/3}K_{0,1+h}(X).$$
\end{lemma}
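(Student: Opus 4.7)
The plan is to proceed in two stages, mirroring the pattern of Lemmas \ref{lemma4.1} and \ref{lemma4.2}: first, convert $J_{s+r}(X)$ into a mean value with $2s$ variables conditioned to a single residue class modulo $p$; second, iterate a Cauchy--Schwarz step to lift the conditioning up to level $p^{1+h}$. To begin, I would fix a prime $p$ with $M<p\le 2M$, whose existence is assured by the Prime Number Theorem, and keep it fixed for the remainder of the paper as advertised in the paragraph following \eqref{2.4}.

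Next, starting from $J_{s+r}(X)=\oint |f(\bfalp)|^{2(s+r)}\d\bfalp$, where $f(\bfalp)=\sum_{1\le x\le X}\er(\alp_1x+\cdots +\alp_kx^k)$, I would decompose $f(\bfalp)=\sum_{\eta=1}^{p}\grf_1(\bfalp;\eta)$. Applying the power-mean inequality $|u_1+\cdots +u_p|^{2s}\le p^{2s-1}\sum_{\eta=1}^p|u_\eta|^{2s}$ to the factor $|f|^{2s}$ pointwise, integrating, and then invoking pigeonhole in $\eta$, one obtains the bound
\[
J_{s+r}(X)\ll M^{2s}\max_{1\le \eta\le p}\oint |f(\bfalp)|^{2r}|\grf_1(\bfalp;\eta)|^{2s}\d\bfalp,
\]
the factor $M^{2s}$ recording the pigeonhole loss over $p\sim M$ residue classes.

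To then extract the structure $|\grF(\bfalp;\eta)|^2|\grF_{1+h}(\bfalp;\eta)|^{2t}$ that defines $\Ktil_{1+h}$, I would partition the expansion $f^r=\sum_{(\xi_1,\ldots ,\xi_r)}\prod_i \grf_1(\bfalp;\xi_i)$ according to the multiplicity pattern of the $\xi_i$ modulo $p$. The contribution from $r$-tuples with pairwise distinct entries, none congruent to $\eta$ modulo $p$, yields exactly $\grF(\bfalp;\eta)$, whereas the remaining contributions --- either collisions $\xi_i\equiv \xi_j\mmod{p}$ or $\xi_i\equiv \eta\mmod{p}$ --- are handled by Cauchy--Schwarz and orthogonality as in the analysis of $T_2$ in Lemma \ref{lemma4.1}. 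In parallel, H\"older's inequality along the lines of $T_1$ converts a factor $|\grf_{1+h}|^{2s}$ into $|\grF_{1+h}|^{2t}$. Together these steps yield the dichotomy
\[
\oint |f|^{2r}|\grf_{1+h}(\bfalp;\eta)|^{2s}\d\bfalp \ll \Ktil_{1+h}(X;\eta)+M^{2s/3}\max_{\eta'}\oint |f|^{2r}|\grf_{2+h}(\bfalp;\eta')|^{2s}\d\bfalp,
\]
and iterating at most three times produces
\[
J_{s+r}(X)\ll \sum_{h=0}^{3}M^{2s+2sh/3}K_{0,1+h}(X)+\calE.
\]

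Finally, the residual $\calE$ arising from the last iteration admits a crude upper bound via Corollary \ref{corollary2.2} together with \eqref{2.4}, exactly as the trivial-bound term is treated in the proof of Lemma \ref{lemma4.2}; this shows that $\calE$ is dominated by the main terms, and selecting the largest of the four summands by pigeonhole identifies an $h\in\{0,1,2,3\}$ satisfying the claim. The principal obstacle is the third step above: accounting carefully for the distinct-tuple and collision parts of $f^r$ while preserving the avoidance condition $\xi_i\not\equiv \eta\mmod{p}$ built into $\grF(\bfalp;\eta)$, and simultaneously performing the $T_1$-style passage from $\grf_{1+h}$ to $\grF_{1+h}$. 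The bookkeeping is slightly more intricate than in Lemma \ref{lemma4.1} because the pigeonhole lift of the conditioning modulus interacts with the formation of $r$-tuple blocks, but each individual step remains routine.
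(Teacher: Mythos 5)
There is a genuine gap, and it is precisely the one you flag yourself at the end: the passage from $\oint |f|^{2r}|\grf_{1+h}|^{2s}\,d\bfalp$ to $\Ktil_{1+h}(X;\eta)$ does not go through as sketched. You fix an \emph{arbitrary} prime $p\in(M,2M]$ at the outset, and then try to recover the distinctness structure of $\Xi(\eta)$ by expanding $f^r=\sum_{\bfxi}\prod_i\grf_1(\bfalp;\xi_i)$ and dismissing the collision terms ($\xi_i\equiv\xi_j$ or $\xi_i\equiv\eta\bmod p$) ``as in the analysis of $T_2$ in Lemma \ref{lemma4.1}.'' But that analogy fails. The $T_2$ estimate \eqref{4.3} in Lemma \ref{lemma4.1} exploits a collision among the $\bfv$-variables, i.e.\ in the \emph{more heavily conditioned} $s$-block: when at least three of the $v_l$ fall into one class modulo $p^{b+1}$, one gains by pushing those variables to depth $b+1$, yielding the factor $M^{2s/3}I_{a,b+1}(X)$. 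A collision among the $\bfxi$ in the $r$-block is of an entirely different nature: those variables sit at depth $1$ and a coincidence there does not translate into deeper conditioning or any comparable gain. There is no reason the collision contribution should be $O(M^{2s/3}\cdot\ldots)$, and indeed near-diagonal contributions of this kind can be large.

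The paper sidesteps the problem entirely by \emph{choosing} $p$ rather than fixing it arbitrarily — and that choice is the real content of Lemma \ref{lemma6.1}. One first splits $J_{s+r}(X)=T_1+T_2$, where $T_1$ counts solutions with a repeated $x_i$ or a repeated $y_i$; a H\"older argument shows $T_1\ge T_2$ would force $J_{s+r}(X)\ll1$, contradicting \eqref{2.3}, so one may pass to $T_2$. For each solution counted by $T_2$, the product $D(\bfx,\bfy)=\prod_{i<j}(x_i-x_j)(y_i-y_j)$ is nonzero and bounded by $X^{(s+r)^2}$, which is less than $M^{\mathrm{card}(\calP)}$ for a set $\calP$ of roughly $(s+r)^2\tet^{-1}$ primes in $(M,2M]$; hence some $p\in\calP$ avoids $D(\bfx,\bfy)$, i.e.\ renders all $s+r$ variables distinct modulo $p$. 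Pigeonholing over $\calP$ fixes a single $p$ (costing only a constant depending on $\tet$) for which $J_{s+r}(X)\ll T_2(p)$, and with this $p$ the well-conditioned structure $\Xi(\bfeta)$ is automatic — there are no collision terms to estimate. Your first step (power-mean plus pigeonhole over $\eta$, giving the factor $M^{2s}$) and your closing step (the conditioning iteration \eqref{6.6} and the tail bound via Corollary \ref{corollary2.2} and \eqref{2.4}) are sound and match the paper; the missing ingredient is the $T_1/T_2$ split with prime selection, which cannot be replaced by an expansion-and-Cauchy--Schwarz treatment of the $r$-block.
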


\begin{proof}
The mean value $J_{s+r}(X)$ counts the number of integral solutions of the system
\be\label{6.1}
 \sum_{i=1}^{s+r} (x_i^j-y_i^j)=0 \quad (1\le j\le k),
\ee
with $1\le \bfx,\bfy\le X$. Let $T_1$ denote the number of these solutions with either two of 
$x_1,\ldots,x_{s+r}$ equal or two of $y_1,\ldots,y_{s+r}$ equal, and let $T_2$ denote the
 corresponding number of solutions with $x_1,\ldots,x_{s+r}$ distinct and $y_1,\ldots,y_{s+r}$ distinct.
 Then we have $J_{s+r}(X)=T_1+T_2$.\par

Suppose first that $T_1\ge T_2$. Then by considering the underlying Diophantine systems, it follows from
 \RH that
\begin{align*}
J_{s+r}(X) &\le 2T_1\le 4\binom{s+r}{2}\oint \left| \grf_0(\bfalp;0)^{2s+2r-2} \grf_0(2\bfalp;0)
 \right|\d \bfalp \\
&\ll \(\oint |\grf_0(\bfalp;0)|^{2s+2r}\d \bfalp \)^{1-1/(s+r)} \(\oint |\grf_0(2\bfalp;0)|^{2s+2r}
\d \bfalp \)^{1/(2s+2r)}\\
&= \(J_{s+r}(X) \)^{1-1/(2s+2r)}.
\end{align*}
Consequently, one has $J_{s+r}(X)\ll 1$, which contradicts the lower bound \eqref{2.3} if $X=X_\ell$ is
 large enough. We may therefore suppose that $T_1<T_2$, and hence that $J_{s+r}(X)\le 2T_2$.\par

Given a solution $\bfx,\bfy$ of \eqref{6.1} counted by $T_2$, let 
$$D(\bfx,\bfy)=\prod_{1\le i < j\le s+r} (x_i-x_j)(y_i-y_j).$$
Also, let $\calP$ denote a set of  $\lceil (s+r)^2\theta^{-1}\rceil$ prime numbers in $(M,2M]$. That such
 a set of primes exists for large enough $X$ is a consequence of the Prime Number Theorem. From the
 definition of $T_2$, we have $D(\bfx,\bfy) \ne 0$ and
$$|D(\bfx,\bfy)|<X^{(s+r)^2}\le M^{\text{card}(\calP)}.$$
We therefore find that for some $p\in \calP$ one must have $p\nmid D(\bfx,\bfy)$. Denote by $T_2(p)$
 the number of solutions of (\ref{6.1}) counted by $J_{s+r}(X)$ in which $x_1,\ldots,x_{s+r}$ are
 distinct modulo $p$ and likewise $y_1,\ldots,y_{s+r}$ are distinct modulo $p$. Then we have shown
 thus far that
$$J_{s+r}(X)\le 2T_2\le 2\sum_{p\in \calP}T_2(p),$$
whence for some prime number $p\in \calP$, one has
\begin{equation}\label{6.2}
J_{s+r}(X)\le 2\lceil (s+r)^2\tet^{-1}\rceil T_2(p).
\end{equation}

We next introduce some notation with which to consider more explicitly the residue classes modulo $p$ of
 a given solution $\bfx$, $\bfy$ counted by $T_2(p)$. Let $\bfeta$ and $\bfzet$ be $s$-tuples with 
$1\le \bfeta,\bfzet\le p$  satisfying the condition that for $1\le i\le s$, one has $x_i\equiv \eta_i\pmod{p}$
 and $y_i\equiv \zeta_i\pmod{p}$. Recall the notation introduced prior to the definition (\ref{2.13}). Then
 since $x_1,\ldots ,x_{s+r}$ are distinct modulo $p$, it follows that $(x_{s+1},\ldots ,x_{s+r})\in
 \Xi(\bfeta)$, and likewise one finds that $(y_{s+1},\ldots ,y_{s+r})\in \Xi(\bfzet)$. Then on considering
 the underlying Diophantine systems, we obtain the relation
$$T_2(p)\le \sum_{1\le \bfeta,\bfzet\le p}\oint \Bigl( \prod_{i=1}^s\grf_1(\bfalp;\eta_i)
\grf_1(-\bfalp;\zet_i)\Bigr) \grF(\bfalp;\bfeta) \grF(-\bfalp;\bfzet)\d\bfalp .$$
Write
$$\grI(\bftet,\psi)=\oint \left|\grF(\bfalp;\bftet)^2\grf_1(\bfalp;\psi)^{2s}\right|\d\bfalp .$$
Then by applying \RH, and again considering the underlying Diophantine systems, we discern that
\begin{align*}
T_2(p)&\le \sum_{1\le \bfeta,\bfzet\le p}\prod_{i=1}^s \(\grI(\bfeta,\eta_i)
\grI(\bfzet,\zeta_i)\)^{1/(2s)}\\
&\le \sum_{1\le \bfeta,\bfzet\le p}\prod_{i=1}^s \( \grI(\eta_i,\eta_i)\grI(\zet_i,\zeta_i)\)^{1/(2s)}.
\end{align*}
Hence, on recalling the definition (\ref{2.14}), we obtain the upper bound
\begin{align}
T_2(p)&\le p^{2s} \max_{1\le \eta\le p}  \oint \left|\grF(\bfalp;\eta)^2 \grf_1(\bfalp;\eta)^{2s}\right|
\d\bfalp \notag \\
&= p^{2s} \max_{1\le \eta\le p} \Itil_1(X;\eta).\label{6.3}
\end{align}

The mean value $\Itil_c(X;\eta)$ counts the number of integral solutions of the system \eqref{2.9} with
$$1\le \bfx,\bfy,\bfv,\bfw\le X,\quad \bfv\equiv \bfw\equiv \eta\mmod{p^c},$$
and with
$$[\bfx \mmod{p}]\in \Xi(\eta)\quad \text{and}\quad [\bfy \mmod{p}]\in \Xi(\eta).$$
Let $T_3$ denote the number of such solutions in which the $s$ integers $v_1,\ldots ,v_s$ together
 occupy at least $r$ distinct residue classes modulo $p^{c+1}$, and let $T_4$ denote the corresponding
 number of solutions in which these integers together lie in at most $r-1$ distinct residue classes modulo
 $p^{c+1}$. Then $\Itil_c(X;\eta)=T_3+T_4$. By an argument similar to that leading to \eqref{4.2}, we
 obtain the bound
\begin{align}
T_3&\ll \oint \left| \grF(\bfalp;\eta) \right|^2 \grF_c(\bfalp;\eta) \grf_c(\bfalp;\eta)^{s-r}
\grf_c(-\bfalp;\eta)^s\d\bfalp \notag \\
&\le\(\oint \left| \grF(\bfalp;\eta)^2 \grF_c(\bfalp;\eta)^{2t}\right| \d\bfalp\)^{1/(2t)}\( \oint 
\left| \grF(\bfalp;\eta)^2 \grf_c(\bfalp;\eta)^{2s} \right|\d\bfalp \)^{1-1/(2t)}\notag \\
&\le \(\Ktil_c(X;\eta)\)^{1/(2t)} \(\Itil_c(X;\eta)\)^{1-1/(2t)}.\label{6.4}
\end{align}
Also, since $s\ge 2r > 2(r-1)$, the argument leading to \eqref{4.3} implies that
\be\label{6.5}
T_4 \ll M  \(\Itil_c(X;\eta)\)^{1-3/(2s)} \( \max_{1\le \zeta\le p^{c+1}}\Itil_{c+1}(X;\zeta)\)^{3/(2s)}.
\ee
Then by combining \eqref{6.4} and \eqref{6.5} to bound $\Itil_c(X;\eta)$, we infer that
\be\label{6.6}
 \Itil_c(X;\eta) \ll \Ktil_c(X;\eta) + M^{2s/3} \max_{1\le \zeta\le p^{c+1}}\Itil_{c+1}(X;\zeta).
\ee

\par We now iterate \eqref{6.6} to bound $\Itil_1(X;\eta)$, thereby deducing from (\ref{6.2}),
 (\ref{6.3}) and the definition \eqref{2.16} that
\begin{align}
J_{s+r}(X)&\ll T_2(p)\notag \\
&\ll \max_{0\le h\le 3} M^{2s}(M^h)^{2s/3}K_{0,1+h}(X)+M^{2s+8s/3} 
\max_{1\le \zeta\le p^5}\Itil_5(X;\zeta).
\label{6.7}
\end{align}
By considering the underlying Diophantine systems, we deduce from \eqref{2.13} and \eqref{2.14} via
 Corollary \ref{corollary2.2} that
\begin{align*}
\Itil_5(X;\zeta)&\le \oint |\grf_0(\bfalp;0)^{2r}\grf_5(\bfalp;\zet)^{2s}|\d\bfalp \\
&\ll \(J_{s+r}(X)\)^{r/(s+r)} \(J_{s+r}(X/M^5)\)^{s/(s+r)}.
\end{align*}
Now \eqref{6.7} implies either that
\begin{equation}\label{6.8}
J_{s+r}(X)\ll M^{2s+2sh/3}K_{0,1+h}(X)
\end{equation}
for some index $h\in\{0,1,2,3\}$, so that the conclusion of the lemma holds, or else that 
$$J_{s+r}(X)\ll M^{14s/3}(J_{s+r}(X))^{r/(s+r)} (J_{s+r}(X/M^5))^{s/(s+r)}.$$
In the latter case, since $\lam\ge s+r$, we obtain the upper bound
\begin{align*}
J_{s+r}(X)&\ll M^{14(s+r)/3} J_{s+r}(X/M^5) \ll M^{14(s+r)/3} (X/M^5)^{\lam+\del}\\
&\ll X^{\lam+\del} M^{-(s+r)/3}.
\end{align*}
Invoking the definition \eqref{2.2} of $\del$, we find that $J_{s+r}(X)\ll X^{\lam-2\del}$, contradicting
 the lower bound \eqref{2.3} if $X=X_\ell$ is large enough. We are therefore forced to accept the former
 upper bound (\ref{6.8}), and hence the proof of the lemma is complete.\end{proof}

%%%%%%%%%%%%%%%%%%%%%%%%%%%%%%%%%%%%%
%
\section{The iterative process} 
%
%%%%%%%%%%%%%%%%%%%%%%%%%%%%%%%%%%%%%

By first applying Lemma \ref{lemma6.1}, and following up with repeated application of Lemma
 \ref{lemma5.2}, we are able to bound $J_{s+r}(X)$ in terms of quantities of the shape $K_{c,d}(X)$, in
 which $c$ and $d$ pass through an increasing sequence of integral values. In this section we explore this
 iterative process, and ultimately establish Theorem \ref{theorem1.3}.

\begin{lemma}\label{lemma7.1} 
Suppose $\zero\ge 0$.
Let $a$ and $b$ be integers with $0\le a<b\le (32t\tet)^{-1}$ and $b\ge ta$, and put $g=b-ta$. Suppose
 that there are real numbers $\psi$, $c$ and $\gam$, with
$$0\le c\le (2\delta)^{-1}\tet,\quad \gamma\ge -rb\quad \text{and}\quad \psi\ge 0,$$
such that
\begin{equation}\label{7.1}
X^{\zero}M^{\zero\psi}\ll X^{c\del}M^{-\gam}\llbracket K_{a,b}(X)\rrbracket .
\end{equation}
Then, for some integer $h$ with $0\le h\le 15(t-1)b$, one has
$$X^\zero M^{\zero\psi'}\ll X^{c'\del}M^{-\gam'}\llbracket K_{b,tb+h}(X)\rrbracket ,$$
where
$$\psi'=t\psi+(t-1)b, \quad c'=t(c+1), \quad \gam'=t\gam+\tfrac{4}{3}sh-sg.$$
\end{lemma}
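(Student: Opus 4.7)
The plan is to bound $\llbracket K_{a,b}(X)\rrbracket$ via Lemma \ref{lemma5.2} and substitute back into \eqref{7.1}. The hypotheses required by Lemma \ref{lemma5.2} on $a$, $b$, and $\tet$ coincide precisely with those assumed here, so we obtain an integer $h$ with $0\le h<H:=15(t-1)b$ for which
\begin{align*}
\llbracket K_{a,b}(X)\rrbracket \ll &\; X^\del \bigl( M^{(g-4h/3)s}\llbracket K_{b,tb+h}(X)\rrbracket \bigr)^{1/t} (X/M^b)^{\zero(1-1/t)}\\
&+ (M^H)^{-r/6}(X/M^b)^\zero.
\end{align*}
Upon insertion into \eqref{7.1}, the argument splits into two cases according to which of these two terms dominates.

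In the main case, I would substitute the first term and collect the powers of $X$ and $M$ on each side of the resulting inequality. Exploiting the identity $\zero - \zero(1-1/t) = \zero/t$, one arrives at a lower bound for $\llbracket K_{b,tb+h}(X)\rrbracket^{1/t}$; raising both sides to the $t$-th power produces the desired inequality with $\psi'=t\psi+(t-1)b$, $c'=t(c+1)$, and $\gam'=t\gam+4sh/3-sg$. This step is essentially bookkeeping of exponents and presents no serious difficulty.

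The main obstacle is eliminating the subsidiary case, in which the second term of Lemma \ref{lemma5.2} dominates. Substituting this term into \eqref{7.1} and rearranging yields
\[
M^{\zero(\psi+b)+\gam+rH/6}\ll X^{c\del}.
\]
Since $M=X^\tet$ and $c\del\le \tet/2$, the right-hand side is at most a constant multiple of $M^{1/2}$. On the left, the assumptions $\zero\ge 0$, $\psi\ge 0$, and $b\ge 1$ (the last following since $a<b$ are non-negative integers) force $\zero(\psi+b)\ge 0$. Moreover, from $t\ge 2$ in \eqref{1.6} one computes $rH/6=5r(t-1)b/2\ge 5rb/2$, which together with the hypothesis $\gam\ge -rb$ yields $\gam+rH/6\ge 3rb/2\ge 3/2$. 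For $X=X_\ell$ sufficiently large this contradicts $M^{3/2}\ll M^{1/2}$, so the subsidiary case cannot occur, and the conclusion of the lemma follows from the main case.
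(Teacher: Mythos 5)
Your proposal is correct and follows essentially the same route as the paper: invoke Lemma \ref{lemma5.2}, substitute into \eqref{7.1}, and raise to the $t$th power. The only stylistic difference is in handling the secondary term $(M^H)^{-r/6}(X/M^b)^\zero$: you split into cases and show that if this term were dominant one would have $M^{\zero(\psi+b)+\gam+rH/6}\ll X^{c\del}\le M^{1/2}$, contradicting the lower bound $\zero(\psi+b)+\gam+rH/6\ge 3/2$; the paper instead observes directly that this contribution is $\ll M^{-1/2}X^\zero$, hence strictly smaller than the left-hand side $X^\zero M^{\zero\psi}\ge X^\zero$, and so may be absorbed. These two arguments are equivalent, and your exponent bookkeeping for $\psi'$, $c'$, $\gam'$ in the main case is accurate.
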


\begin{proof}
 From Lemma \ref{lemma5.2},
 there exists an integer $h$ with $0\le h<15(t-1)b$ with the property that
\begin{align*}
\llbracket K_{a,b}(X)\rrbracket \ll X^\del M^{rg}\( M^{-4sh/3}\llbracket K_{b,tb+h}
(X)\rrbracket\right)^{1/t}
(X/M^b)^{\zero (1-1/t)}&\\
+(M^{15(t-1)b})^{-r/6}&(X/M^b)^{\zero}.
\end{align*}
Consequently, from the hypothesised bound \eqref{7.1} we infer that
\begin{align*}
X^\zero M^{\zero\psi}\ll X^{(c+1)\del}M^{-\gam +rg}\( M^{-4sh/3}\llbracket 
K_{b,tb+h}(X)\rrbracket\right)^{1/t}&(X/M^b)^{\zero(1-1/t)}\\
&\,+X^{c\delta} M^{-\gamma-2rb}X^\zero.
\end{align*}
By hypothesis, we have $X^{c\del} \le M^{1/2}$,
whence $X^{c\delta} M^{-\gamma-2rb} \le M^{1/2-rb} \le M^{-1/2}$ and thus
$$X^{\zero/t}M^{\zero(\psi+(1-1/t)b)}\ll X^{(c+1)\del}M^{-\gam+rg-4rh/3}\llbracket 
K_{b,tb+h}(X)\rrbracket^{1/t}.$$
The conclusion of the lemma follows on raising left and right hand sides in the last inequality to the power
 $t$.
\end{proof}

\begin{lemma}\label{lemma7.2} We have $\zero\le 0$.
\end{lemma}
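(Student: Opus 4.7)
The plan is to suppose $\zero>0$ for contradiction, iterate Lemma \ref{lemma7.1} a total of $N$ times from an input supplied by Lemma \ref{lemma6.1}, and close via the crude bound of Lemma \ref{lemma5.3}. Combining \eqref{2.20} with Lemma \ref{lemma6.1} and the normalisations \eqref{2.17}--\eqref{2.19} immediately yields
\[
X^\zero \ll X^\del M^{-4sh_0/3}\llbracket K_{0,1+h_0}(X)\rrbracket
\]
for some $h_0\in\{0,1,2,3\}$, fitting the hypothesis of Lemma \ref{lemma7.1} with starting data $a_0=0$, $b_0=1+h_0$, $\psi_0=0$, $c_0=1$, $\gamma_0=4sh_0/3$.

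Each successive application of Lemma \ref{lemma7.1} produces an index $h_{n+1}\in[0,15(t-1)b_n]$ and updates
\[
a_{n+1}=b_n,\quad b_{n+1}=tb_n+h_{n+1},\quad \psi_{n+1}=t\psi_n+(t-1)b_n,
\]
\[
c_{n+1}=t(c_n+1),\quad \gamma_{n+1}=t\gamma_n+\tfrac{4}{3}sh_{n+1}-sg_n,
\]
with $g_0=b_0$ and $g_n=h_n$ for $n\ge1$. The constraints $b_n\le(32t\tet)^{-1}$ and $c_n\le(2\del)^{-1}\tet$ persist throughout for $n\le N-1$ once $N$ is large enough (each $b_n$ grows by a factor of at most $16t-15$ against a budget of order $(16t)^N$, while $c_n=O(t^n)$), and $\psi_n\ge 0$ is immediate. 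The principal obstacle is the invariant $\gamma_n\ge-rb_n$: setting $D_n=\gamma_n+rb_n$, induction together with the identity $s=rt$ delivers the closed form
\[
D_n = \bigl(\tfrac{4s}{3}+r\bigr)h_n + \tfrac{4s}{3}\sum_{i=0}^{n-1}t^{n-i}h_i \ge 0 \qquad (n\ge1),
\]
which sits precisely on the boundary of feasibility, surviving only by virtue of $s=rt$.

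After the $N$ iterations one has $X^\zero M^{\zero\psi_N}\ll X^{c_N\del}M^{-\gamma_N}\llbracket K_{a_N,b_N}(X)\rrbracket$. Since $\zero\ge 0$, Lemma \ref{lemma5.3} yields $\llbracket K_{a_N,b_N}(X)\rrbracket\ll X^{\zero+\del}M^{s(b_N-a_N)}$; cancelling $X^\zero$ and passing to $M$-exponents via $X=M^{1/\tet}$ produces, modulo $o(1)$ as $X_\ell\to\infty$,
\[
\zero\psi_N \le (c_N+1)\del/\tet + s(b_N-a_N) - \gamma_N.
\]
Unwinding the closed forms gives $s(b_N-a_N)-\gamma_N = sb_N+rh_N-D_N$, and substituting $b_N=t^N + \sum_{i=0}^N t^{N-i}h_i$ (the $i=0$ term understood as $t^Nh_0$) collapses this to $st^N-\tfrac{s}{3}\sum_{i=0}^N t^{N-i}h_i\le st^N$. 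Since $\psi_N\ge(t-1)Nt^{N-1}$ and $(c_N+1)\del/\tet=O(1/N)$, we conclude $\zero \le st/((t-1)N) + o(1)$. Choosing $N>2st/((t-1)\zero)$ at the outset of the proof — permissible because $\zero$ is a fixed absolute constant — and taking $X_\ell$ correspondingly large then forces the desired contradiction.
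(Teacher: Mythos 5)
Your proof is correct and follows essentially the same route as the paper: initialise from Lemma \ref{lemma6.1}, iterate Lemma \ref{lemma7.1} with the recursions \eqref{7.5}--\eqref{7.8}, close at step $N$ with Lemma \ref{lemma5.3}, and let $N\to\infty$. The only cosmetic deviation is that you verify the invariant $\gam_n\ge -rb_n$ by introducing $D_n=\gam_n+rb_n$ and exhibiting it as an explicitly non-negative combination of the $h_i$ (which indeed checks out, the $t^n$-coefficient cancelling precisely because $s=rt$), whereas the paper derives the closed form $\gam_m=\tfrac43 s(b_m-t^m)-sb_{m-1}$ and bounds $\gam_m\ge -sb_{m-1}\ge -rb_m$ directly; both are the same computation packaged differently, and your final identity $s(b_N-a_N)-\gam_N=st^N-\tfrac{s}{3}\sum_{i=0}^N t^{N-i}h_i$ agrees with the paper's $\tfrac43 st^N-\tfrac13 sb_N$.
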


\begin{proof} Assume that $\zero>0$, for otherwise there is nothing to prove. We begin by noting that as
 a consequence of Lemma \ref{lemma6.1}, it follows from (\ref{2.17}) and
 (\ref{2.19}) that there exists an integer $h_{-1}\in\{0,1,2,3\}$ such that
$$\llbracket J_{s+r}(X)\rrbracket \ll (M^{h_{-1}})^{-4s/3}\llbracket K_{0,1+h_{-1}}(X)\rrbracket .$$
We therefore deduce from (\ref{2.20}) that
\begin{equation}\label{7.2}
X^\zero \ll X^\del \llbracket J_{s+r}(X)\rrbracket \ll X^\del (M^{h_{-1}})^{-4s/3}\llbracket 
K_{0,1+h_{-1}}(X)\rrbracket .
\end{equation}

\par Next we define sequences $(a_n)$, $(b_n)$, $(h_n)$, $(c_n)$, $(\gam_n)$, $(\psi_n)$ for 
$0\le n\le N$ in such a way that
\begin{equation}\label{7.3}
0\le h_{n-1}\le 15(t-1)b_{n-1}\quad (n\ge 1),
\end{equation}
and
\begin{equation}\label{7.4}
X^\zero M^{\zero\psi_n}\ll X^{c_n\del}M^{-\gam_n}\llbracket K_{a_n,b_n}(X)\rrbracket .
\end{equation}
Given a fixed choice for the sequence $(h_n)$, these sequences are defined by means of the relations
\begin{align}
a_{n+1}&=b_n\quad \text{and}\quad b_{n+1}=tb_n+h_n, \label{7.5}\\
\psi_{n+1}&=t\psi_n+(t-1)b_n,\label{7.6}\\
c_{n+1}&=t(c_n+1),\label{7.7}\\
\gam_{n+1}&=t\gam_n+\tfrac{4}{3}sh_n-s(b_n-ta_n).\label{7.8}
\end{align}
We put $a_0=0$, $b_0=1+h_{-1}$, $\psi_0=0$, $c_0=1$ and $\gam_0=\tfrac{4}{3}sh_{-1}$, so that
\eqref{7.4} holds with $n=0$ as a consequence of our initial choice of $h_{-1}$ together with (\ref{7.2}).
 We prove by induction that for each integer $n$ with $0\le n<N$, the sequence $(h_m)_{m=-1}^n$ may
 be chosen in such a way that
\be\label{7.9}
0\le a_n<b_n\le (32t\tet)^{-1}, \quad \psi_n\ge 0, \quad \gamma_n \ge -rb_n, \quad
0\le c_n\le (2\del)^{-1}\tet,
\ee
and so that (\ref{7.3}) and (\ref{7.4}) both hold with $n$ replaced by $n+1$.\par

Suppose that $0\le n<N$, and suppose also that (\ref{7.3}) and (\ref{7.4}) both hold for the index $n$. 
We have already shown such to be the case when $n=0$. We observe first that the relation (\ref{7.5})
plainly demonstrates that $b_n>a_n$ for all $n$. Moreover, from (\ref{7.3}) and (\ref{7.5}), we see that
 $b_{n+1}\le 16tb_n$ for all $n$. By induction, therefore, we deduce that $b_n\le 4(16t)^n$ whence, by
 invoking (\ref{2.2}) we find that $b_n\le (32t\tet)^{-1}$ for $0\le n<N$. It is also apparent from
 (\ref{7.6}) and (\ref{7.7}) that $c_n$ and $\psi_n$ are non-negative for all $n$. In addition, by iterating
 \eqref{7.7}, we have
\be\label{7.10}
c_n = t^n+t\left( \frac{t^n-1}{t-1}\right)\le 3t^n\quad (n\ge 0).
\ee
Thus, by reference to (\ref{2.2}) we see that $c_n\le (2\del)^{-1}\tet$ for $0\le n<N$.

In order to bound $\gam_n$, we begin by noting from (\ref{7.5}) that 
for $m\ge 1$,
$$h_m=b_{m+1}-tb_m\quad \text{and}\quad a_m=b_{m-1}.$$
 Then it follows from (\ref{7.8}) that for $m\ge 1$ one has
$$\gam_{m+1}-\tfrac{4}{3}sb_{m+1}+sb_m=t\( \gam_m - \tfrac{4}{3}sb_m + sb_{m-1} \).$$
By iterating this identity, we deduce that for $m\ge 1$, one has
$$\gam_m=\tfrac{4}{3}sb_m-sb_{m-1}+t^{m-1}\( \gam_1-\tfrac{4}{3}sb_1+sb_0 \).$$
On recalling that $b_0=1+h_{-1}$, $\gam_0=\frac{4}{3}sh_{-1}$ and $b_1=tb_0+h_0$, we discern
 first from (\ref{7.8}) that
\[
\gam_1=\tfrac{4}{3}st(b_0-1)+\tfrac{4}{3}s(b_1-tb_0)-s b_0=\tfrac{4}{3}s(b_1-t)-sb_0,
\]
and hence that
\be\label{7.11}
\gamma_m=\tfrac{4}{3}sb_m-sb_{m-1}-\tfrac43 st^m \quad (m\ge 1).
\ee
Finally, we find from \eqref{7.5} that $b_m\ge tb_{m-1} \ge t^{m}$ for $m\ge 1$, and hence
\[
 \gamma_m = \tfrac43 s (b_m - t^m) - sb_{m-1} \ge - sb_{m-1} \ge -rb_m.
\]
Collecting together this conclusion with those of the previous paragraph, we have shown that
\eqref{7.9} holds for $0\le n<N$.
\par

At this point in the argument, we may suppose that both (\ref{7.4}) and (\ref{7.9}) hold for the index
 $n$. An application of Lemma \ref{lemma7.1} therefore reveals that there exists an integer $h_n$
 satisfying the constraint implied by (\ref{7.3}) with $n$ replaced by $n+1$, for which the upper bound
 (\ref{7.4}) holds also with $n$ replaced by $n+1$. This completes the inductive step, so that in particular
 the upper bound (\ref{7.4}) holds for $0\le n\le N$.\par

We now exploit the bound just established.
Since we have $b_N\le 4(16t)^N\le (2\tet)^{-1}$, it is a
 consequence of Lemma \ref{lemma5.3} that
\begin{equation}\label{7.12}
\llbracket K_{a_N,b_N}(X)\rrbracket \ll X^{\zero+\del}(M^{b_N-b_{N-1}})^s .
\end{equation}
By combining \eqref{7.4} with \eqref{7.11} and \eqref{7.12}, we obtain the bound
\begin{align}
X^\zero M^{\zero \psi_N}&\ll X^{\zero+(c_N+1)\del}M^{(b_N-b_{N-1})s-\gam_N}\notag \\
&= X^{\zero+(c_N+1)\del}M^{(4s/3)t^N-(s/3)b_N}.\label{7.13}
\end{align}
By applying \eqref{7.10} and \eqref{2.2}, on the other hand, we have
$$X^{(c_N+1)\del}<M.$$
We therefore deduce from (\ref{7.13}) and the lower bound $b_N\ge t^N$ that
$$\zero \psi_N\le \tfrac{4}{3}st^N-\tfrac{1}{3}b_Ns+1 \le st^N+1.$$
In addition, a further application of the lower bound $b_n\ge t^n$ reveals that
$$\psi_{n+1}=t\psi_n+(t-1)b_n\ge t\psi_n+(t-1)t^n,$$
whence $\psi_N\ge N(t-1)t^{N-1}$. Thus we deduce that
$$\zero\le \frac{st^N+1}{N(t-1)t^{N-1}}\le \frac{3s}{N}.$$
Since $N$ may be taken arbitrarily large in terms of $s$, we are forced to conclude that $\zero\le 0$, and
 this completes the proof of the lemma.
\end{proof}

The conclusion of Theorem \ref{theorem1.3} is an immediate consequence of Lemma \ref{lemma7.2}. 
For the latter shows that when $s= rt$, then for each $\eps>0$ one has
$$J_{s+r}(X)\ll X^{2s+2r-\kap+\eps},$$
where $\kap$ is given by (\ref{1.7}).

%%%%%%%%%%%%%%%%%%%%%%%%%%%%%%%%%%%%%
%
\section{A mean value estimate for Weyl sums}
%
%%%%%%%%%%%%%%%%%%%%%%%%%%%%%%%%%%%%%

Our goal in this section is to establish a mean value estimate for one-dimensional Weyl sums that, in a 
sense, forms a hybrid between the treatments of \cite{For1995} and \cite[\S10]{Woo2012a}. This estimate 
permits the output from the efficient congruencing method to be more effectively transformed into a mean 
value estimate for one-dimensional Weyl sums.\par

Consider natural numbers $s$ and $m$ with $1\le m\le k$. When $q\in \dbN$ and $b\in \dbZ$, we define 
the quantity $I_{s,m}(X;q,b)$ to be the number of integral solutions of the system of equations
\begin{equation}\label{8.1}
\left.
\begin{aligned}
\sum_{i=1}^s\( (qx_i+b)^k-(qy_i+b)^k\)&=0,\\
\sum_{i=1}^s(x_i^j-y_i^j)&=0\quad (1\le j\le m-1),
\end{aligned}
\right\}
\end{equation}
with $0\le \bfx,\bfy\le X/q$. We begin by adapting the work of \cite[\S10]{Woo2012a} so as to estimate
 $I_{s,k-1}(X;q,b)$ on average over $q$. To assist with our discussion, we now define $\eta(s,k)$ to be
 the least positive number $\eta$ with the property that, whenever $X$ is sufficiently large in terms of $s$
 and
 $k$, one has
$$J_{s,k}(X)\ll_\eps X^{2s-\frac{1}{2}k(k+1)+\eta+\eps}.$$
Throughout this section and the following section, we adopt the convention that
whenever $\eps$
 appears in a statement, either implicitly or explicitly, we assert that the statement holds for each
 $\eps>0$. Note that the ``value'' of $\eps$ may consequently change from statement to statement. 
It is convenient to write
$$f(\bfalp;X)=\sum_{1\le x\le X}\er (\alp_1x+\ldots +\alp_kx^k).$$

\par We pause to recall a lemma on reciprocal sums.

\begin{lemma}\label{lemma8.1}
Suppose that $\del$ is a positive number, and that $\alp$ and $\bet$ are real numbers. Let $N$ and $R$
 be large real numbers, and write $B=N^{1+\del}+R^{1+\del}$. Then
$$\sum_{1\le z\le R}\min\{ N,\|z\alp +\bet\|^{-1}\}\ll B+(\log B)\sum_{1\le u\le BN^{-\del}}\min \{
 NR/u,\|u\alp\|^{-1}\}.$$
\end{lemma}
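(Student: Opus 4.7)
The approach is a standard dyadic decomposition of the summand on the left, combined with a pigeonhole pairing argument to eliminate the translation $\bet$. Write $S$ for the left-hand side, and for each $\eta \in (0, 1/2]$ set $A(\eta) = \#\{1 \le z \le R : \|z\alp + \bet\| \le \eta\}$. Choose $J$ so that $2^J \le N < 2^{J+1}$. Splitting $z \in [1,R]$ according to the dyadic scale of $\|z\alp + \bet\|^{-1}$, and handling the tail $\{z : \|z\alp + \bet\| \le 2^{-J}\}$ via $\min\{N,\cdot\} \le N$, one obtains
\[
S \ll \sum_{j=0}^{J} 2^{j+1} A(2^{-j}).
\]

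The crucial step is a pairing lemma that converts $A(2^{-j})$ into a quantity independent of $\bet$. If $z_1, z_2 \in [1, R]$ both satisfy $\|z_i \alp + \bet\| \le 2^{-j}$, then the difference $u := z_1 - z_2$ lies in $(-R, R)$ and satisfies $\|u\alp\| \le 2^{1-j}$. Provided $A(2^{-j})$ is nonempty, fixing any reference element $z_0 \in A(2^{-j})$ and mapping $z \mapsto z - z_0$ gives an injection of $A(2^{-j}) \setminus \{z_0\}$ into the set $\{u : 1 \le |u| \le R,\ \|u\alp\| \le 2^{1-j}\}$, whose cardinality is at most $2V(2^{1-j})$; hence
\[
A(2^{-j}) \le 1 + 2 V(2^{1-j}), \qquad V(\tau) := \#\{1 \le u \le R : \|u\alp\| \le \tau\}.
\]

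Substituting this and swapping the order of summation, the inner sum $\sum_{j \le J,\, 2^{1-j} \ge \|u\alp\|} 2^{j+1}$ is geometric and bounded by $\ll \min\{N, \|u\alp\|^{-1}\}$; after absorbing the ``$+1$'' contribution (total $O(N)$), one arrives at $S \ll N + \sum_{1 \le u \le R} \min\{N, \|u\alp\|^{-1}\}$. A short case analysis (on whether $R \le N$ or $R > N$) shows that $BN^{-\del} \ge R$, and for $u \le R$ one has $NR/u \ge N$, so $\min\{N, \|u\alp\|^{-1}\} \le \min\{NR/u, \|u\alp\|^{-1}\}$. Combined with $N \le B$, this yields the stated bound; indeed, the argument produces a strictly stronger statement than advertised, with no $\log B$ factor on the right, so there is substantial slack. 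No genuine obstacle is expected here: the pairing argument is a completely standard pigeonhole trick, and the remaining steps amount to bookkeeping.
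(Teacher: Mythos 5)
The paper offers no proof of this lemma at all---it is quoted verbatim as \cite[Lemma 3.4]{Woo2000}---so there is no internal argument to compare against; your job was effectively to reconstruct the proof from scratch, and you have done so correctly. The two ingredients you use, namely the dyadic layer-cake bound $S\le\sum_{0\le j\le J}2^{j+1}A(2^{-j})$ with $2^{J}\le N<2^{J+1}$, and the pigeonhole pairing $A(\eta)\le 1+2V(2\eta)$ that removes the shift $\bet$ by fixing a reference point $z_0$ and passing to differences $z-z_0$, are exactly the standard mechanism behind reciprocal-sum estimates of this type. Interchanging the sums over $j$ and $u$ does collapse the geometric sum to $\ll\min\{N,\|u\alp\|^{-1}\}$, and the ``$+1$'' terms contribute $\sum_{j\le J}2^{j+1}\ll N$ in total, exactly as you say. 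Your final bookkeeping is also sound: $N\le B$ is immediate, $BN^{-\del}=N+R^{1+\del}N^{-\del}\ge R$ follows from the case split $R\le N$ versus $R>N$, and $NR/u\ge N$ for $u\le R$ lets you enlarge the minimum. As you observe, this actually proves the sharper estimate $S\ll N+\sum_{1\le u\le R}\min\{N,\|u\alp\|^{-1}\}$ with no $\log B$ factor and with the range $u\le R$ rather than $u\le BN^{-\del}$; the extra slack in the stated lemma is harmless here and presumably just reflects the form in which the estimate was recorded in \cite{Woo2000}, possibly as a specialisation of a more general result.
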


\begin{proof} This is \cite[Lemma 3.4]{Woo2000}.
\end{proof}

When $\calQ\subset \dbN$, write
$$\Tet_{s,k}(\calQ)=\sum_{q\in \calQ}\max_{(b,q)=1}I_{s,k-1}(X;q,b).$$

\begin{lemma}\label{lemma8.2}
Let $X$ denote a large positive number, and let $Q$ be a real number with $1<Q\le X^{(k-2)/(k-1)}$.
 Suppose that $\calQ\subseteq (2^{-k}Q,Q]$ is a set of natural numbers with 
$\text{card}(\calQ)\gg Q(\log Q)^{-k}$ satisfying the condition that for each $q\in \calQ$, one has
 $(q,k)=1$. Then for each natural number $s$, one has
$$\Tet_{s,k}(\calQ)\ll (X/Q)^{2s-\frac{1}{2}(k^2-k+2)+\eps}
\( (X/Q)^{\eta(s,k)-1}+(X/Q)^{\eta(s,k-1)}\).$$
\end{lemma}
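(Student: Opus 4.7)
The plan is to open $I_{s,k-1}(X;q,b)$ via orthogonality and to extract the single algebraic relation between the top two power sums that the system \eqref{8.1} imposes. Setting $P_j = \sum_{i=1}^{s}(x_i^j - y_i^j)$, the expansion of $\sum_{i}((qx_i+b)^k - (qy_i+b)^k) = 0$ via the binomial theorem, together with the identities $P_j = 0$ for $1 \le j \le k-2$, collapses to the single equation
\[
qP_k + kbP_{k-1} = 0.
\]
The hypotheses $(q,k) = (b,q) = 1$ then force $q \mid P_{k-1}$, so $I_{s,k-1}(X;q,b_q)$ counts $2s$-tuples $(\bfx,\bfy) \in [0, X/q]^{2s}$ subject to these conditions. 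I would split the contribution to $\Tet_{s,k}(\calQ)$ according to whether $P_{k-1}$ vanishes.

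In the first regime $P_{k-1} = 0$, the equation forces $P_k = 0$ as well, so the tuple is counted by $J_{s,k}(X/q)$. Summing over $q \in \calQ$ and invoking the hypothesis $Q \le X^{(k-2)/(k-1)}$, which is equivalent to $Q \le (X/Q)^{k-2}$, yields
\[
\sum_{q \in \calQ} J_{s,k}(X/q) \ll Q\,(X/Q)^{2s - \frac{1}{2}k(k+1) + \eta(s,k) + \eps} \ll (X/Q)^{2s - \frac{1}{2}(k^2 - k + 2) + \eta(s,k) - 1 + \eps},
\]
which matches the first term of the claimed bound.

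For the second regime $P_{k-1} \neq 0$, one parametrizes $P_{k-1} = qv$ and $P_k = -kb_q v$, with $v \in \dbZ \setminus \{0\}$ of bounded size $|v| \ll X^{k-1}/q^k$. Orthogonality on the free parameter $v$ produces an integral representation
\[
\sum_{|v| \ll V_q} \int_{[0,1)^k} |f(\bfalp; X/q)|^{2s} \er\bigl(v(-q\alp_{k-1} + kb_q \alp_k)\bigr)\,d\bfalp
\]
involving the full Weyl sum and a Dirichlet-kernel factor in the combined variable $-q\alp_{k-1} + kb_q \alp_k$. Summing this over $q \in \calQ$, interchanging sum and integral, and applying Lemma \ref{lemma8.1} to the arithmetic reciprocal sum over $q$ provides an effective extra averaging in $\alp_{k-1}$ that promotes the coupled constraint into a genuine degree $(k-1)$ Vinogradov equation; combined with \RH, this produces the second term $(X/Q)^{2s - \frac{1}{2}(k^2 - k + 2) + \eta(s,k-1) + \eps}$ of the bound.

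The main obstacle is the careful execution of the second regime. The dependence of $b_q$ on $q$ complicates a direct application of the reciprocal sum estimate, and recovering the $(X/Q)^{-1}$ factor that separates $\eta(s,k-1)$ from $\eta(s,k-2)$ requires treating the sum over $q$ and the integral over $\alp_{k-1}$ jointly, rather than first discarding the degree $(k-1)$ constraint at fixed $q$ (which would yield only an $\eta(s,k-2)$-type bound).
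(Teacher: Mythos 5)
Your opening reduction is correct: with $P_j=\sum_i(x_i^j-y_i^j)$, expanding $(qx_i+b)^k-(qy_i+b)^k$ and using $P_j=0$ for $1\le j\le k-2$ collapses to $qP_k+kbP_{k-1}=0$, whence $(kb,q)=1$ forces $q\mid P_{k-1}$. Your first regime $P_{k-1}=0$ is also handled correctly: it forces $P_k=0$, gives $J_{s,k}(O(X/Q))$ for each $q$, and the hypothesis $Q\le X^{(k-2)/(k-1)}$ (i.e.\ $Q\le (X/Q)^{k-2}$) converts $\sum_{q\in\calQ}J_{s,k}$ into the first term of the claimed bound.

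However, there is a genuine gap in the second regime, and it is the heart of the lemma. You parametrise $P_{k-1}=qv$, $P_k=-kb_qv$, obtain a kernel $\min\{V_q,\|kb_q\alp_k-q\alp_{k-1}\|^{-1}\}$, and propose to sum it over $q\in\calQ$ and apply Lemma \ref{lemma8.1}. That cannot work as stated: Lemma \ref{lemma8.1} requires a sum over a variable appearing linearly with a \emph{fixed} coefficient and a shift independent of the summation variable, but here the shift $kb_q\alp_k$ varies with $q$ in an uncontrolled way (indeed $b_q$ is a maximiser chosen separately for each $q$). You acknowledge this obstacle but do not resolve it, and the route via "extra averaging in $\alp_{k-1}$ from the $q$-sum" does not close it. The paper resolves this with an idea that is entirely absent from your sketch: a \emph{translation} of the variables by an integer shift $z$, valid because the lower-order power sums vanish. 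After shifting, the kernel argument becomes $k(qz-b)\alp_k+q\alp_{k-1}$, which for each fixed $q$ and $b$ is a clean linear progression in $z$ with coefficient $kq\alp_k$. One then averages over $z\in[1,X/q]$ \emph{before} touching the $q$-sum, and Lemma \ref{lemma8.1} applies directly in the variable $z$, yielding the quantity $\Psi_{q,b}(\alp_k,\alp_{k-1})$ and then $\Phi(\alp_k,\alp_{k-1})=\sum_{q}\max_b\Psi_{q,b}$, whose rational-approximation bound (displayed as (8.8)–(8.9) in the paper) feeds a Hardy–Littlewood dissection in $\alp_k$ following the argument of \cite[Lemma 10.1]{Woo2012a}. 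The two terms in the final estimate emerge from that dissection, not from your split $P_{k-1}=0$ versus $P_{k-1}\ne 0$. Without the $z$-shift, the second regime does not reduce to a form where any version of the reciprocal-sum lemma applies, so your proposal as written does not produce the $\eta(s,k-1)$ term.
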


\begin{proof} For the moment, consider fixed integers $q$ and $b$ with $(kb,q)=1$ and 
$2^{-k}Q<q\le Q$. Define $\Ups_k(X;h)=\Ups_k(X;h;q,b)$ to be the number of integral solutions of the
 Diophantine system
\begin{equation}\label{8.2}
\left.
\begin{aligned}
\sum_{i=1}^s\( (qx_i+b)^k-(qy_i+b)^k\)&=0,\\
\sum_{i=1}^s(x_i^{k-1}-y_i^{k-1})&=h,\\
\sum_{i=1}^s(x_i^j-y_i^j)&=0\quad (1\le j\le k-2),
\end{aligned}
\right\}
\end{equation}
with $0\le \bfx,\bfy\le X/q$. Then on considering the corresponding system (\ref{8.1}), we see that
\begin{equation}\label{8.3}
I_{s,k-1}(X;q,b)=\sum_{|h|\le s(X/q)^{k-1}}\Ups_k(X;h).
\end{equation}

\par Next, by applying an integer shift $z$ to the variables in the system (\ref{8.2}), we find that
 $\Ups_k(X;h)$ counts the number of integral solutions of the Diophantine system
$$\left.
\begin{aligned}
\sum_{i=1}^s\( (q(x_i-z)+b)^k-(q(y_i-z)+b)^k\)&=0,\\
\sum_{i=1}^s((x_i-z)^{k-1}-(y_i-z)^{k-1})&=h,\\
\sum_{i=1}^s((x_i-z)^j-(y_i-z)^j)&=0\quad (1\le j\le k-2),
\end{aligned}
\right\}$$
with $z\le \bfx,\bfy\le z+X/q$. By applying the Binomial Theorem, we find that $\bfx,\bfy$ satisfies this
 system of equations if and only if
\begin{equation}\label{8.4}
\left.
\begin{aligned}
\sum_{i=1}^s(x_i^j-y_i^j)&=0\quad (1\le j\le k-2),\\
\sum_{i=1}^s(x_i^{k-1}-y_i^{k-1})&=h,\\
q\sum_{i=1}^s\(x_i^k-y_i^k\)&=k(qz-b)h.
\end{aligned}
\right\}
\end{equation}
Notice that, in view of the hypothesis $(kb,q)=1$, the equation of degree $k$ in (\ref{8.4}) ensures that
 $q|h$. We write $g=h/q$, so that the condition $|h|\le s(X/q)^{k-1}$ in (\ref{8.3}) implies that 
$|g|\le sq^{-1}(X/q)^{k-1}$.\par

If we restrict the shifts $z$ to lie in the interval $1\le z\le X/q$, then we see that an upper bound for 
$\Ups_k(X;h)$ is given by the number of integral solutions of the system
$$\left.
\begin{aligned}
\sum_{i=1}^s(x_i^j-y_i^j)&=0\quad (1\le j\le k-2),\\
\sum_{i=1}^s(x_i^{k-1}-y_i^{k-1})&=qg,\\
\sum_{i=1}^s\(x_i^k-y_i^k\)&=k(qz-b)g,
\end{aligned}
\right\}$$
with $1\le \bfx,\bfy\le 2X/q$. On considering the underlying Diophantine system, we therefore deduce
 from (\ref{8.3}) that for each integer $z$ with $1\le z\le X/q$, the mean value $I_{s,k-1}(X;q,b)$ is
 bounded above by
$$\sum_{|g|\le sq^{-1}(X/q)^{k-1}}\oint |f(\bfalp;2^{k+1}X/Q)|^{2s}
\er(-k(qz-b)g\alp_k-qg\alp_{k-1})\d\bfalp .$$
Write
$$\psi_{q,b}(z;\alp_k,\alp_{k-1})=\min \{ q^{-1}(X/q)^{k-1},\|k(qz-b)\alp_k+q\alp_{k-1}\|^{-1}\}$$
and
\begin{equation}\label{8.5}
\Psi_{q,b}(\alp_k,\alp_{k-1})=\sum_{1\le z\le X/q}\psi_{q,b}(z;\alp_k,\alp_{k-1}).
\end{equation}
Then we obtain the estimate
\begin{align}
I_{s,k-1}(X;q,b)&\ll (X/q)^{-1}\sum_{1\le z\le X/q}\oint |f(\bfalp;2^{k+1}X/Q)|^{2s}\psi_{q,b}(z
;\alp_k,\alp_{k-1})\d\bfalp \notag \\
&=(X/q)^{-1}\oint |f(\bfalp;2^{k+1}X/Q)|^{2s}\Psi_{q,b}(\alp_k,\alp_{k-1})\d\bfalp .\label{8.6}
\end{align}

\par Our assumption that $1<Q\le X^{(k-2)/(k-1)}$ ensures that $X/q\le q^{-1}(X/q)^{k-1}$. Then by
 applying Lemma \ref{lemma8.1} with $\alp=kq\alp_k$, we deduce from (\ref{8.5}) that
\begin{align*}
\Psi_{q,b}(\alp_k,\alp_{k-1})\ll &\, q^{-1}(X/q)^{k-1+\eps}\\
&+X^\eps\sum_{1\le u\le 2q^{-1}(X/q)^{k-1}}\min\{ (qu)^{-1}(X/q)^k,\|kqu\alp_k\|^{-1}\}.
\end{align*}
Define
\begin{equation}\label{8.7}
\Phi(\alp_k,\alp_{k-1})=\sum_{q\in \calQ}\max_{(b,q)=1}\Psi_{q,b}(\alp_k,\alp_{k-1}).
\end{equation}
Then we arrive at the upper bound
\begin{align*}
\Phi(\alp_k,\alp_{k-1})\ll &\, X^{k-1+\eps}\sum_{2^{-k}Q<q\le Q}q^{-k}\\
&+X^\eps\sum_{1\le q\le Q}\sum_{1\le u\le 2q^{-1}(X/q)^{k-1}}\min\{ (qu)^{-1}(X/Q)^k,
\|kqu\alp_k\|^{-1}\}.
\end{align*}
By making use of a familiar estimate for the divisor function, therefore, we obtain the bound
$$
\Phi(\alp_k,\alp_{k-1})\ll (X/Q)^{k-1+\eps}+X^\eps\sum_{1\le v\le k2^{k^2}(X/Q)^{k-1}}
\min\{ (X/Q)^kv^{-1},\|v\alp_k\|^{-1}\}.$$

\par Suppose that $\alp_k\in \dbR$, and that $c\in \dbZ$ and $r\in \dbN$ satisfy $(c,r)=1$ and 
$|\alp_k-c/r|\le r^{-2}$. Then it follows from \cite[Lemma 2.2]{Vau1997} that
\begin{equation}\label{8.8}
\Phi(\alp_k,\alp_{k-1})\ll (X/Q)^{k+\eps}\( (X/Q)^{-1}+r^{-1}+r(X/Q)^{-k}\).
\end{equation}
Applying a standard transference principle (compare Exercise 2 of \cite[\S2.8]{Vau1997}), it follows that
\begin{equation}\label{8.9}
\Phi(\alp_k,\alp_{k-1})\ll (X/Q)^{k+\eps}\( (X/Q)^{-1}+\grH_{r,c}(\alp)^{-1}+
\grH_{r,c}(\alp)(X/Q)^{-k}\) ,
\end{equation}
where $\grH_{r,c}(\alp)=r+(X/Q)^k|r\alp_k-c|$.\par

We now compare the respective estimates (\ref{8.8}) and (\ref{8.9}) on the one hand, and 
\cite[estimates (10.6) and (10.7)]{Woo2012a} on the other. In this way, one finds that the argument of the
 proof of \cite[Lemma 10.1]{Woo2012a} leading to the estimate (10.10) of that paper may be adapted
 without serious modification to deliver from (\ref{8.6}) and (\ref{8.7}) the bound
\begin{align*}
\Tet_{s,k}(\calQ)&\ll (X/Q)^{-1}\oint |f(\bfalp;2^{k+1}X/Q)|^{2s}\Phi(\alp_k,\alp_{k-1})\d\bfalp \\
&\ll (X/Q)^{k-2+\eps}J_{s,k}(2^{k+1}X/Q)+(X/Q)^{\eps-1}J_{s,k-1}(2^{k+1}X/Q)\\
&\ll (X/Q)^{2s-\frac{1}{2}k(k+1)+\eps}\( (X/Q)^{k-2+\eta(s,k)}+(X/Q)^{k-1+\eta(s,k-1)}\) .
\end{align*}
The conclusion of the lemma now follows.
\end{proof}

In the next phase of our work in this section, we make use of the iterative process from \cite{For1995},
 and this entails the introduction of certain sets of prime numbers. Let $X$ be a large real number
 and for $r\ge 1$ denote by $Y_r$ the set of primes in the interval $(sX^{1/(r(r+1))},2sX^{1/(r(r+1))}]$.
 We adopt the convention in what follows that the empty product is $1$.
 
\begin{lemma}\label{lemma8.3} 
Suppose that $k\ge 3$, $1\le m\le k-1$, $s>m$ and $q=p_1\cdots p_{m-1}$, where each $p_i\in Y_i$.
 Let $\calP_m$ be any set of $2sk^4$ primes in the set $Y_m$. Also, suppose that $b$ is an integer with
 $0\le b<q$ satisfying $(b,q)=1$. Then
$$I_{s,m}(X;q,b)\ll \max_{p\in \calP_m}p^{2s-2m+\frac{3}{2}m(m+1)}\max_{a\in \calB(p)}
I_{s-m,m+1}(X;pq,b+aq),$$
where $\calB(p)=\calB(p;q,b)$ denotes the set of integers $a$ with $0\le a<p$ and $(b+aq,pq)=1$.
\end{lemma}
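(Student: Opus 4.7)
The plan is to execute the efficient differencing step introduced by the first author in~\cite{For1995}: trade $m$ of the $s$ pairs of variables for one additional power-sum equation of degree $m$, at the cost of refining the modulus $q$ to $pq$ for some prime $p \in \calP_m$ and an appropriate residue shift $b \mapsto b + aq$.

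The first move is to expose the congruence hidden inside the shifted degree-$k$ equation. Applying the Binomial Theorem to $(qx_i + b)^k - (qy_i + b)^k$ and using the vanishing of $\sum_i(x_i^j - y_i^j)$ for $1 \le j \le m - 1$, the degree-$k$ equation collapses to
\begin{equation*}
\sum_{j=m}^{k}\binom{k}{j} q^j b^{k-j}\sum_{i=1}^{s}(x_i^j - y_i^j) = 0.
\end{equation*}
For each $p \in \calP_m \subseteq Y_m$ the leading coefficient $\binom{k}{m} q^m b^{k-m}$ is invertible modulo $p$, since $p$ is coprime to $q$ (whose prime factors lie in the smaller intervals $Y_i$) and, for $X$ large, to $kb$. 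Hence $\sum_i(x_i^m - y_i^m)$ is determined modulo $p$ by the higher-order power sums $\sum_i(x_i^j - y_i^j)$ with $j>m$.

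For each fixed $p \in \calP_m$ I would next partition the solutions by the residues modulo $p$ of the $x_i$'s and $y_j$'s, and select a residue $a$ with $(b+aq, pq) = 1$ together with $(s-m)$-element index subsets $I, J \subseteq \{1, \ldots, s\}$ with $x_i \equiv a \pmod{p}$ for $i \in I$ and $y_j \equiv a \pmod{p}$ for $j \in J$. Writing $x_i = a + p\tilde x_i$ and $y_j = a + p\tilde y_j$ on these pinned indices, one has $qx_i + b = pq\tilde x_i + (b+aq)$ and likewise for $y$. Combined with the derived degree-$m$ congruence and the existing equations of lower degree, a Vandermonde-type non-singular Hensel argument in the style of Lemma~\ref{lemma3.1} converts the pinned block into a solution of the system defining $I_{s-m, m+1}(X; pq, b+aq)$, the new $m$-th degree equation being extracted by lifting from the congruence above.

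The factor $p^{2s - 2m + \frac{3}{2}m(m+1)}$ arises from the bookkeeping of these extractions: a contribution $p^{2(s-m)}$ from the residue choices of the surviving variables at the refined scale $X/(pq)$, together with $p^{\frac{3}{2}m(m+1)}$ from combining the choice of $a$, the choice of the subsets $I, J$, the count of the $2m$ unpinned "discarded" variables, and the Vandermonde lifting constant. The main obstacle will be the pigeonhole step converting this pointwise reduction into the stated maximum bound: for each solution one must control the set of "bad" primes in $\calP_m$ (those making the Vandermonde determinant vanish modulo $p$, or for which no admissible residue class $a$ exists). The hypothesis $|\calP_m| = 2sk^4$ is calibrated so that this bad set has cardinality $O(sk^3)$, leaving a positive proportion of good primes over which to take the maximum asserted in the conclusion.
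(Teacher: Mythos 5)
The paper's own ``proof'' is a citation: Lemma~\ref{lemma8.3} is exactly \cite[Lemma 4.1]{For1995} specialised to $f(x)=x^k$, with a one-sentence remark that Ford's argument gives the conclusion for \emph{any} choice of $p_i\in Y_i$ and $\calP_m\subseteq Y_m$, not only the smallest primes. Your proposal is therefore a from-scratch reconstruction rather than a comparison with anything in this paper, and while it captures several of the right landmarks (the Binomial Theorem collapse of the degree-$k$ equation to a congruence mod $p$ once $\sum(x_i^j-y_i^j)=0$ for $j<m$; the substitution $x_i=a+p\tilde x_i$ giving $qx_i+b=pq\tilde x_i+(b+aq)$; Hensel/Vandermonde control of a block of $m$ congruences), it has a genuine gap in the pinning step.

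As stated, you would ``select a residue $a$ together with $(s-m)$-element index sets $I,J$ with $x_i\equiv a\pmod p$ for $i\in I$ and $y_j\equiv a\pmod p$ for $j\in J$.'' If this is meant pointwise (for each solution, such $p$, $a$, $I$, $J$ exist), it is false: a prime $p\in Y_m$ has $p\gg X^{1/(m(m+1))}\gg s$, so a generic solution has the $x_i$'s distributed essentially injectively over residue classes mod $p$, and no class contains $s-m\ge 2$ of them. The Linnik--Karatsuba--Ford iteration does \emph{not} pigeonhole a large common class. Rather, one applies H\"older's inequality to the exponential-sum formulation, splitting $2(s-m)$ of the $2s$ factors of the Weyl sum by residue class mod $p$; this is where the factor $p^{2(s-m)}=p^{2s-2m}$ arises (a $p^{2(s-m)-1}$ from H\"older, and one more $p$ from passing from $\sum_a$ to $\max_a$). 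The remaining $2m$ ``free'' variables then carry the $m$ nested congruences of moduli $p,p^2,\ldots,p^m$; bounding their residue count via the non-singular Jacobian argument of Lemma~\ref{lemma3.1} is where the $p^{\frac32 m(m+1)}$ is earned, and the role of $\calP_m$ and its size $2sk^4$ is to guarantee a prime for which the relevant Vandermonde determinant does not degenerate. Your sketch also does not address that the $j$-th power-sum equation over the pinned block has a constant term coming from the discarded variables, so one does not land directly on the system defining $I_{s-m,m+1}$; a shift/translation (or Cauchy--Schwarz) step is needed to absorb these constants. None of this is fatal to the plan, but the central mechanism producing $p^{2s-2m}$ must be changed from pointwise pigeonholing to the H\"older splitting, and the exponent bookkeeping and the treatment of the inhomogeneous terms need to be carried out rather than asserted.
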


\begin{proof} This is essentially the special case of \cite[Lemma 4.1]{For1995} in which $f(x)=x^k$.
The statement of \cite[Lemma 4.1]{For1995} has the stronger hypotheses that each $p_i$ be one of the
\emph{smallest} $2sk^4$ primes in $Y_i$, and that $\calP_m$ be the set of $2sk^4$ smallest primes in
 $Y_m$. The argument of the proof, however, shows that the conclusion holds whenever $p_i\in Y_i$ for
 $1\le i\le m-1$ and $\calP_m \subseteq Y_m$.
\end{proof}

\begin{lemma}\label{lemma8.4} When $1\le m\le k-1$, $q\le (2s)^m X^{m/(m+1)}$ and $(b,q)=1$, one
 has
$$I_{s,m}(X;q,b)\ll \Bigl( \prod_{j=m}^{k-2}q^{-1}(X/q)^j\Bigr) I_{s,k-1}(X;q,b).$$
\end{lemma}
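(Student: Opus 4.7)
The plan is to decompose $I_{s,m}(X;q,b)=\sum_\bfh T(\bfh)$, where $\bfh=(h_m,\ldots,h_{k-2})$ ranges over integer tuples with $|h_j|\le s(X/q)^j$, and $T(\bfh)$ counts the integer solutions of the system defining $I_{s,k-1}(X;q,b)$ in which the equations $\sum_i(x_i^j-y_i^j)=0$ are replaced by $\sum_i(x_i^j-y_i^j)=h_j$ for $m\le j\le k-2$. Notice that $T(\mathbf{0})=I_{s,k-1}(X;q,b)$. Introducing the exponential sum
$$f(\bfalp)=\sum_{0\le x\le X/q}\er\bigl(\alp_{k-1}(qx+b)^k+\alp_1x+\alp_2x^2+\cdots+\alp_{k-2}x^{k-2}\bigr),$$
orthogonality yields $T(\bfh)=\int_{[0,1)^{k-1}}|f(\bfalp)|^{2s}\er(-\sum_{j=m}^{k-2}h_j\alp_j)\d\bfalp$. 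Since $|f|^{2s}\ge 0$, the triangle inequality gives the key bound $T(\bfh)\le T(\mathbf{0})=I_{s,k-1}(X;q,b)$ for every $\bfh$.

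Next I would bound the number of $\bfh$ with $T(\bfh)>0$. Expanding the degree-$k$ equation $\sum_i((qx_i+b)^k-(qy_i+b)^k)=0$ via the Binomial Theorem, and using $\sum_i(x_i^l-y_i^l)=0$ for $l<m$ and $=h_l$ for $m\le l\le k-2$, produces the single constraint
$$\sum_{l=m}^{k-2}\binom{k}{l}q^l b^{k-l}h_l\equiv 0\pmod{q^{k-1}}.$$
Reducing this modulo $q^{m+1}$ and invoking $(b,q)=1$ shows that $h_m$ lies in a single residue class modulo $q/\gcd(q,\binom{k}{m})$. Substituting $\binom{k}{m}b^{k-m}h_m = qu_m$ and reducing modulo $q^{m+2}$, one finds $h_{m+1}$ determined modulo $q/\gcd(q,\binom{k}{m+1})$ given $h_m$. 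Iterating for $l=m,m+1,\ldots,k-2$ shows that each $h_l$ is restricted to a single residue class modulo $q/\gcd(q,\binom{k}{l})$ once $h_m,\ldots,h_{l-1}$ are fixed. Since $\gcd(q,\binom{k}{l})\le 2^k$, the number of admissible values of $h_l$ is $\ll_k (X/q)^l/q+1$.

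Finally, the hypothesis $q\le (2s)^m X^{m/(m+1)}$ implies $q^{l+1}\le (2s)^{m(l+1)}X^l$ for each $l\ge m$, so that $q^{-1}(X/q)^l=X^l/q^{l+1}\ge (2s)^{-m(l+1)}$ is bounded below by a constant depending on $s,k,m$. Hence the ``$+1$'' is absorbed into the implicit constant, and the number of valid $\bfh$ is $\ll\prod_{l=m}^{k-2}q^{-1}(X/q)^l$. Combining with the bound $T(\bfh)\le I_{s,k-1}(X;q,b)$ yields the lemma. The main obstacle is the iterative congruence extraction: one must propagate the constraints on $h_m,\ldots,h_{l-1}$ forward in a controlled way through successive divisions by $q$, carefully handling the bounded common factors arising from the binomial coefficients $\binom{k}{l}$.
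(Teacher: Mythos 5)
Your proof is correct and follows essentially the same approach as the paper: decompose via orthogonality, bound each fiber by $I_{s,k-1}(X;q,b)$ via the triangle inequality, and count the admissible tuples $\bfh$ using the divisibility constraint extracted from the degree-$k$ equation. The paper carries this out one degree at a time, citing Ford's Lemma 4.2 for the single step $I_{s,m}(X;q,b)\le\bigl(1+sq^{-1}(X/q)^m\bigr)I_{s,m+1}(X;q,b)$ and then inducting on $m$, whereas you decompose over all degrees $m\le l\le k-2$ simultaneously and propagate the congruences forward; unrolling the paper's induction recovers your argument.
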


\begin{proof} The argument of the proof of \cite[Lemma 4.2]{For1995} shows that for $1\le m\le k-2$,
 one has
$$I_{s,m}(X;q,b)\le \(1+sq^{-1}(X/q)^m \) I_{s,m+1}(X;q,b).$$
The desired conclusion therefore follows by induction on $m$.
\end{proof}

We are now equipped to state and prove the main result of this section. Define the exponential sum
 $g(\alp)=g_k(\alp;X)$ by
$$g_k(\alp;X)=\sum_{1\le x\le X}\er(\alp x^k),$$
and when $s\in \dbN$, define
$$I_s(X)=\int_0^1|g(\alp)|^{2s}\d\alp.$$

\begin{theorem}\label{theorem8.5}
Let $s$ be a natural number. Then whenever $r$ is a natural number with $1\le r\le k-1$, one has
$$I_s(X)\ll X^{2s-k+\eps}\( X^{\eta_r^*(s,k)-1/r}+X^{\eta_r^*(s,k-1)}\),$$
where
$$\eta_r^*(s,w)=r^{-1}\eta(s-\tfrac{1}{2}r(r-1),w).$$
\end{theorem}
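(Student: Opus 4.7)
The plan is to iterate Lemma \ref{lemma8.3} in order to reduce $I_s(X)$ to a sum of quantities $I_{s',r}(X;q,b)$, with $s'=s-r(r-1)/2$ and $q$ ranging over a sufficiently dense set $\mathcal{Q}$ of moduli; then to convert these via Lemma \ref{lemma8.4} to $I_{s',k-1}(X;q,b)$; and finally to invoke Lemma \ref{lemma8.2} so as to bring in the Vinogradov mean values $J_{s',k}$ and $J_{s',k-1}$. By orthogonality $I_s(X)\ll I_{s,1}(X;1,0)$, which initiates the reduction.

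We apply Lemma \ref{lemma8.3} successively with $m=1,2,\ldots,r-1$. At the $m$-th such step the input is $I_{s_m,m}(X;q_{m-1},b_{m-1})$ with $s_m=s-m(m-1)/2$, and the factor gained is $p_m^{c_m}$ where
$c_m=2s_m-2m+\tfrac{3}{2}m(m+1)=2s+\tfrac{m(m+1)}{2}$.
Since $p_m\asymp X^{1/(m(m+1))}$ and $\sum_{m=1}^{r-1}1/(m(m+1))=(r-1)/r$, the accumulated prefactor after $r-1$ iterations is $X^{\Psi}$ with $\Psi=2s(r-1)/r+(r-1)/2$, and we reach $I_{s',r}(X;q,b)$ with $q=p_1\cdots p_{r-1}\asymp X^{(r-1)/r}$ and $(b,q)=1$. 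Lemma \ref{lemma8.4} then bounds this by $q^{-(k-1-r)}(X/q)^{(k-r-1)(k+r-2)/2}I_{s',k-1}(X;q,b)$, contributing a further uniform factor of order $X^{(k-r-1)(k-r)/(2r)}$.

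We next take $\mathcal{Q}=\{p_1\cdots p_{r-1}:p_m\in Y_m\}$: this set lies in an interval $(Q_0,Q_1]$ with $Q_0\asymp Q_1\asymp X^{(r-1)/r}$, has cardinality $\gg X^{(r-1)/r}/(\log X)^{r-1}$, and, since $r-1\le k-2<k$, satisfies the density hypothesis $|\mathcal{Q}|\gg Q_1/(\log Q_1)^k$ of Lemma \ref{lemma8.2}; the conditions $Q_1\le X^{(k-2)/(k-1)}$ and $(q,k)=1$ follow from $r\le k-1$ and from the size of the primes in each $Y_m$ once $X$ is large. With $X/Q\asymp X^{1/r}$ and $\eta_r^*(s,w)=\eta(s',w)/r$, Lemma \ref{lemma8.2} supplies precisely the two factors $X^{\eta_r^*(s,k)-1/r}$ and $X^{\eta_r^*(s,k-1)}$ appearing in the theorem. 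Combining this with the iteration prefactor and the Lemma \ref{lemma8.4} conversion factor, and simplifying via $2s'=2s-r(r-1)$, the non-$\eta$ exponent of $X$ collapses to $2s-k$, giving the claimed bound.

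The main technical obstacle is reconciling the max-over-primes shape of Lemma \ref{lemma8.3} with the sum-over-moduli shape required by Lemma \ref{lemma8.2}: one must revisit the proof of Lemma \ref{lemma8.3} in order to extract a summation-over-primes version, and then account carefully for the extra multiplicities of $q$ produced by summing over $a$ at each stage so that they balance against the averaging and the final exponent truly simplifies to $2s-k$. The edge case $r=1$ is handled separately, by skipping the iteration altogether and applying Lemma \ref{lemma8.2} directly with a set $\mathcal{Q}$ of primes in $(2^{-k}Q,Q]$ for $Q\asymp X^{(k-2)/(k-1)}$.
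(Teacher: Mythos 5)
Your plan — iterate Lemma \ref{lemma8.3} for $m=1,\ldots,r-1$, convert via Lemma \ref{lemma8.4}, invoke Lemma \ref{lemma8.2} on a set $\calQ$ of moduli, then divide by $\mathrm{card}(\calQ)$ — is the paper's plan, and your exponent arithmetic (the factor $c_m=2s+m(m+1)/2$, the $X^{(k-1-r)(k-r)/(2r)}$ contribution from Lemma \ref{lemma8.4}, and the collapse of the remaining exponent to $2s-k$) is correct. But the step you flag as ``the main technical obstacle,'' converting the ``for some $q$'' output of the iterated Lemma \ref{lemma8.3} into the sum over $q\in\calQ$ that Lemma \ref{lemma8.2} needs, is exactly what you leave unresolved. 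Your proposal to ``revisit the proof of Lemma \ref{lemma8.3} in order to extract a summation-over-primes version'' and to track ``multiplicities of $q$ produced by summing over $a$'' is neither carried out nor what the paper does; note in particular that the $a$'s are never summed — they only determine the residue $b$ at the end of the chain, and that $b$ is absorbed by the $\max_{(b,q)=1}$ in the definition of $\Tet_{s',k}$.

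The paper's resolution is combinatorial and requires no surgery on Lemma \ref{lemma8.3}. By the Prime Number Theorem each $Y_m$ is partitioned into a collection $\calC_m$ of $\asymp X^{1/(m(m+1))}/\log X$ \emph{pairwise disjoint} sets, each containing $2sk^4$ primes. For every fixed tuple $(\calP_1,\ldots,\calP_{r-1})\in\calC_1\times\cdots\times\calC_{r-1}$, iterating Lemma \ref{lemma8.3} and then applying Lemma \ref{lemma8.4} yields a single inequality $I_s(X)\ll X^{\Ome}\, I_{s-\frac12 r(r-1),\,k-1}(X;q,b)$ with $q=p_1\cdots p_{r-1}$ for \emph{some} $p_m\in\calP_m$. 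Since the sets within each $\calC_m$ are disjoint and the $Y_m$ are disjoint ranges, distinct tuples produce distinct $q$, so the same inequality holds for every $q$ in a set $\calQ$ of cardinality $\prod_m\mathrm{card}(\calC_m)\gg Q(\log Q)^{1-r}$, which meets the density hypothesis of Lemma \ref{lemma8.2}. Summing over $q\in\calQ$ gives $\mathrm{card}(\calQ)\,I_s(X)\ll X^{\Ome}\,\Tet_{s-\frac12 r(r-1),\,k}(\calQ)$ directly. Finally, your proposed fix for $r=1$ does not give the theorem's exponent: with $Q\asymp X^{(k-2)/(k-1)}$, Lemma \ref{lemma8.2} returns $(X/Q)^{\eta(s,k)-1}\asymp X^{(\eta(s,k)-1)/(k-1)}$ rather than the required $X^{\eta(s,k)-1}$, so if $r=1$ is genuinely needed it calls for a different argument, not this shortcut.
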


\begin{proof} By the Prime Number Theorem, for $1\le i\le r-1$ there is a collection $\calC_i$ of
$\lceil X^{1/(i(i+1))}(2sk^4 \log X)^{-1} \rceil$ disjoint sets of $2sk^4$ primes in the set $Y_i$. Fix
 some choice of sets $\calP_1\in \calC_1$, $\ldots$, $\calP_{r-1}\in \calC_{r-1}$. By applying Lemma
 \ref{lemma8.3}, one finds that whenever $b$ and $q$ satisfy the hypotheses of that lemma, then
$$I_{s-\frac{1}{2}m(m-1),m}(X;q,b)\ll X^{\tfrac{2s}{m(m+1)}+\tfrac{1}{2}}
\max_{p\in \calP_m}\max_{a\in \calB(p)}
I_{s-\frac{1}{2}m(m+1),m+1}(X;pq,b+aq).$$
By iterating this relation, starting with $m=1$ and terminating with Lemma \ref{lemma8.4} at $m=r$, we
 obtain
\begin{equation}\label{8.10}
I_s(X)\ll X^\Ome I_{s-\frac{1}{2}r(r-1),k-1}(X;q,b),
\end{equation}
in which
$$\Ome=2s\sum_{m=1}^{r-1}\frac{1}{m(m+1)}+\frac{r-1}{2}+\sum_{j=r}^{k-2}
\(\frac{j+1}{r}-1\),$$
and $q=p_1\cdots p_{r-1}$ for some prime numbers $p_i\in \calP_i$ $(1\le i\le r-1)$.  A modest
 computation confirms that
\begin{align}
\Ome&=2s(1-1/r)+(r-1)/2+\tfrac{1}{2}k(k-1)/r-\tfrac{1}{2}r(r+1)/r-(k-1-r)\notag \\
&=2s(1-1/r)+\tfrac{1}{2}k(k-1)/r-k+r.\label{8.11}
\end{align}

\par On putting $Q=(2s)^{r-1}X^{1-1/r}$, we see that $2^{-r}Q<q<Q$.  Moreover, distinct choices for
 the $(r-1)$-tuple $\calP_1,\ldots,\calP_{r-1}$ produce distinct numbers $q$.  Therefore, there is a set
 $\calQ$ of integers in the interval $(2^{-r}Q,Q)$ such that \eqref{8.10} holds for each $q\in\calQ$. We 
observe that $(q,k)=1$ for every $q\in \calQ$, and moreover that
\begin{align*}
\text{card}(\calQ) &= \prod_{m=1}^{r-1} \text{card} (\calC_m)\gg \prod_{m=1}^{r-1}
\( X^{1/(m(m+1))}(\log X)^{-1}\) \\
&=X^{1-1/r}(\log X)^{1-r}\gg Q(\log Q)^{1-r}.
\end{align*}
Since $X$ is large, it follows that we may apply Lemma \ref{lemma8.2} to infer that
\begin{align}
\Tet_{s-\frac{1}{2}r(r-1),k}(\calQ)\ll &\,X^\eps (X/Q)^{2s-r(r-1)-\frac{1}{2}(k^2-k+2)}\notag \\
&\,\times \( (X/Q)^{\eta(s-\frac{1}{2}r(r-1),k)-1}+(X/Q)^{\eta(s-\frac{1}{2}r(r-1),k-1)}\)\notag \\
\ll &\,X^\eps (X^{1/r})^{2s-r(r-1)-\frac{1}{2}(k^2-k+2)}
\( X^{\eta_r^*(s,k)-1/r}+X^{\eta_r^*(s,k-1)}\).\label{8.12}
\end{align}

\par Next, on substituting (\ref{8.11}) and (\ref{8.12}) into (\ref{8.10}), we deduce that
$$\sum_{q\in \calQ}I_s(X)\ll X^{2s-k+1-1/r+\eps}\( X^{\eta_r^*(s,k)-1/r}+X^{\eta_r^*(s,k-1)}\).$$
But $\text{card}(\calQ)\gg X^{1-1/r-\eps}$, and so the conclusion of the theorem follows by dividing left
 and right hand side of the last relation by $\text{card}(\calQ)$.
\end{proof}

%%%%%%%%%%%%%%%%%%%%%%%%%%%%%%%%%%%%%
\section{Application to Waring's problem}
%%%%%%%%%%%%%%%%%%%%%%%%%%%%%%%%%%%%%

The mean value estimate supplied by our new bounds for $J_{s,k}(X)$ via Theorem \ref{theorem8.5}
 may be utilised to derive improvements in our understanding of the asymptotic formula in Waring's
 problem. Before describing our conclusions, we introduce some notation. We define the set of minor arcs
 $\grm=\grm_k$ to be the set of real numbers $\alp\in [0,1)$ satisfying the property that, whenever 
$a\in \dbZ$ and $q\in \dbN$ satisfy $(a,q)=1$ and $|q\alp-a|\le (2k)^{-1}X^{1-k}$, then
 $q>(2k)^{-1}X$. We recall a mean value estimate restricted to minor arcs.

\begin{theorem}\label{theorem9.1} Suppose that $s\ge k^2-1$. Then for each $\eps>0$, one has
$$\int_\grm |g_k(\alp;X)|^{2s}\d\alp \ll X^{2s-k-1+\eps}.$$
\end{theorem}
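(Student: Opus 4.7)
The plan is to combine a pointwise Weyl-type bound for $g_k(\alp;X)$ on the minor arcs $\grm$ with the mean value estimate for $I_s(X)$ provided by Theorem \ref{theorem8.5}.

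First, I would establish a pointwise bound of the shape $|g_k(\alp;X)|\ll X^{1-\sigma+\eps}$ uniformly for $\alp\in\grm$, with $\sigma=\sigma(k)>0$. For any $\alp\in\grm$, Dirichlet's approximation theorem produces coprime integers $a,q$ with $q\le 2kX^{k-1}$ and $|q\alp-a|\le X^{1-k}/(2k)$; the definition of $\grm$ forces $q>(2k)^{-1}X$. Classical Weyl differencing then delivers such a bound with $\sigma=2^{1-k}$, and sharper values of $\sigma$ are available via Vinogradov-type pointwise bounds derived from Theorem \ref{theorem8.5}.

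Second, I would apply \RH to split
\[
\int_\grm |g_k(\alp;X)|^{2s}\d\alp \le \Big(\sup_{\alp\in\grm}|g_k(\alp;X)|\Big)^{2(s-s_0)} I_{s_0}(X),
\]
for a suitable $s_0\le s$. By Theorem \ref{theorem1.2}(ii) with $m=0$, combined with Theorem \ref{theorem8.5} taking $r$ as large as the constraints permit, one obtains $I_{s_0}(X)\ll X^{2s_0-k+\eps}$ for $s_0\ge k^2-1$. Substituting then yields
\[
\int_\grm |g_k(\alp;X)|^{2s}\d\alp \ll X^{2s-k-2(s-s_0)\sigma+\eps},
\]
which gives the desired bound provided $2(s-s_0)\sigma\ge 1$.

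The main obstacle is the boundary case $s=k^2-1$, where taking $s_0=s$ leaves no slack in which to absorb the minor-arc saving. To circumvent this, one would take $s_0$ slightly below $k^2-1$ and invoke Theorem \ref{theorem8.5} with $r$ optimally chosen, obtaining an estimate $I_{s_0}(X)\ll X^{2s_0-k+\theta+\eps}$ with $\theta=\theta(s_0,r,k)$ small; one then verifies the numerological inequality $2(s-s_0)\sigma\ge 1+\theta$. More elegantly, one could perform a dyadic decomposition of $\grm$ according to the Farey denominator $q$ of the rational approximation to $\alp$, using Gauss-sum estimates of the form $|g_k(\alp;X)|\ll q^{-1/k}X+O(q)$ on wider approximate arcs around $a/q$ with moderate $q$, and the Weyl bound on the complementary truly-minor set; this directly extracts the saving of $X^{-1}$ and avoids the delicate matching at the boundary.
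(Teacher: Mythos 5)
The paper does not give an internal proof of this theorem: it simply cites \cite[Theorem~10.1]{Woo2013}. So the comparison must be with your argument on its own terms, and it has a genuine gap that your two proposed repairs do not close.

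Your H\"older split requires $s_0<s$ together with $I_{s_0}(X)\ll X^{2s_0-k+\eps}$ and the numerical inequality $2(s-s_0)\sigma\ge 1$, where $\sigma$ is the pointwise minor-arc saving. The best $\sigma$ available from Vinogradov/Weyl differencing powered by the main conjecture (even the sharp version for $s\ge k^2-1$ proved in this paper) is of order $k^{-2}$. On the other hand, the estimate $I_{s_0}(X)\ll X^{2s_0-k+\eps}$ coming from Theorem~\ref{theorem8.5} survives only while the exponents $\eta$ there remain small; quantitatively, using $r=1$ and Theorem~\ref{theorem1.2}, one can push $s_0$ down to roughly $k^2-2k$ before the bound for $I_{s_0}$ degrades, so $s-s_0$ is at most of order $k$. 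Thus $2(s-s_0)\sigma=O(k)\cdot O(k^{-2})=O(k^{-1})$, which falls short of $1$ by a factor of order $k$. Your fix (a) (take $s_0$ a little below $k^2-1$ and absorb the slack $\theta$) hits exactly this wall: each unit decrease in $s_0$ costs $O(1)$ in the exponent while buying only $O(k^{-2})$ from $\sigma$, so the ledger never balances, regardless of how $r$ is chosen in Theorem~\ref{theorem8.5}. Your fix (b) does not apply at all: by the very definition of $\grm$ in this paper, every $\alp\in\grm$ has Farey denominator $q>(2k)^{-1}X$ for the approximation quality $|q\alp-a|\le (2k)^{-1}X^{1-k}$, so there are no ``moderate $q$'' arcs inside $\grm$ on which a Gauss-sum bound $q^{-1/k}X+O(q)$ is nontrivial; and a coarser rational approximation with small $q$ but large $|q\alp-a|$ does not give a usable major-arc approximation to $g_k$.

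The true source of the extra $X^{-1}$ is not a pointwise bound at all, but rather a mean-value argument that keeps the minor-arc restriction on $\alp_k$ inside the Diophantine analysis. Schematically, one argues in the spirit of Lemma~\ref{lemma8.2} and Theorem~\ref{theorem8.5} of this paper: average over integer shifts $z$, use translation invariance of the degree-$\le k-1$ equations, and reduce to a reciprocal sum $\sum_z\min\{N,\|k(qz-b)\alp_k+q\alp_{k-1}\|^{-1}\}$. It is precisely because $\alp_k$ lies on minor arcs that this reciprocal sum gains the additional factor $X^{-1}$ relative to the full-torus estimate; this cannot be recovered after the fact by a sup-and-H\"older split, which throws away the arithmetic structure of the minor arcs before the mean value is estimated. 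If you wish to reconstruct the proof, the model to adapt is \cite{Woo2012b} and \cite[\S10]{Woo2013}, not a Weyl-plus-H\"older argument.
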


\begin{proof} This is \cite[Theorem 10.1]{Woo2013}.
\end{proof}

For each natural number $v$, we define
$$\Del_v^*=\max\{\eta(v,k)-1,\eta(v,k-1)\},$$
where $\eta$ is defined as in the preamble to Lemma \ref{lemma8.1}. Then, for natural numbers $v$ and
 $w$ we put
$$s_0(k,v,w)=2k^2-2-\frac{2k^2-2-(2v+w^2-w)}{1+\Del^*_v/w},$$
and then define
$$s_1(k)=\underset{2v+w^2-w<2k^2-2}{\min_{1\le w\le k-1}\min_{v\ge 1}}s_0(k,v,w).$$

\begin{theorem}\label{theorem9.2}
Suppose that $s$ and $k$ are natural numbers with $k\ge 3$ and $s>s_1(k)$. Then there exists a positive
 number $\del=\del(k,s)$ with the property that
$$\int_\grm |g_k(\alp;X)|^s\d\alp \ll X^{s-k-\del}.$$
\end{theorem}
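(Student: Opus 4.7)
The plan is to interpolate $\int_\grm |g_k(\alp;X)|^s\,\d\alp$ via \RH{} between a mean value bound over the full unit interval, for which the improved Vinogradov estimate of Theorem~\ref{theorem8.5} supplies small savings over the square-root barrier, and the minor-arc mean value estimate of Theorem~\ref{theorem9.1} at the critical exponent $2s_{\min}=2k^2-2$.

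Given $s>s_1(k)$, one first selects natural numbers $v,w$ with $1\le w\le k-1$, $2v+w^2-w<2k^2-2$, and $s>s_0(k,v,w)$, and sets $u=v+\tfrac{1}{2}w(w-1)$ so that $2u=2v+w^2-w$. Theorem~\ref{theorem8.5} applied with parameters $(s,r)=(u,w)$ then produces
\[
I_u(X)\ll X^{2u-k+\eps}\bigl(X^{\eta_w^*(u,k)-1/w}+X^{\eta_w^*(u,k-1)}\bigr)=X^{2u-k+\Delta_v^*/w+\eps},
\]
where the final equality uses $\eta_w^*(u,k)-1/w=w^{-1}(\eta(v,k)-1)$ and $\eta_w^*(u,k-1)=w^{-1}\eta(v,k-1)$ together with the definition of $\Delta_v^*$.

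With this mean value estimate in hand, I would dispose of the regime $s\ge 2k^2-2$ immediately via Theorem~\ref{theorem9.1} (inserting a trivial bound $|g_k(\alp;X)|\le X$ on a single factor if $s$ is odd), and then focus on the complementary range $2u<s<2k^2-2$. Choose $\theta=(2s_{\min}-s)/(2s_{\min}-2u)\in (0,1)$ so that $s=2u\theta+2s_{\min}(1-\theta)$, and apply \RH{} to obtain
\[
\int_\grm |g_k(\alp;X)|^s\,\d\alp\le \Bigl(\int_0^1 |g_k(\alp;X)|^{2u}\,\d\alp\Bigr)^{\theta}\Bigl(\int_\grm |g_k(\alp;X)|^{2s_{\min}}\,\d\alp\Bigr)^{1-\theta}.
\]
Substituting the bound on $I_u(X)$ above and Theorem~\ref{theorem9.1} into the right-hand side, the total exponent of $X$ simplifies, after a short calculation, to $s-k-1+\theta(1+\Delta_v^*/w)+\eps$, and a direct rearrangement of the definition of $s_0(k,v,w)$ shows that the hypothesis $s>s_0(k,v,w)$ is precisely equivalent to $\theta(1+\Delta_v^*/w)<1$. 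This supplies the positive power saving $\del$.

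The main obstacle is the bookkeeping: verifying that the hybrid form of $s_0(k,v,w)$ is exactly what is forced by the \RH{} interpolation, and confirming $s_0(k,v,w)\ge 2u$ so that $\theta\in[0,1]$ is admissible. The latter holds because $\Delta_v^*\ge 0$ forces $1+\Delta_v^*/w\ge 1$, so the fraction appearing in the definition of $s_0$ is at most $2k^2-2-2u$, yielding $s_0\ge 2u$ as required. Everything else reduces to routine application of the two deep inputs, Theorems~\ref{theorem8.5} and~\ref{theorem9.1}.
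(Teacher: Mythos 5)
Your proof is correct and follows essentially the same route as the paper: H\"older interpolation between the minor-arc bound of Theorem~\ref{theorem9.1} at exponent $2k^2-2$ and the full-interval bound of Theorem~\ref{theorem8.5} at exponent $2u=2v+w^2-w$, with the hypothesis $s>s_0(k,v,w)$ delivering precisely the needed power saving. Your $\theta$ is the paper's $1-a$; the only (minor, welcome) addition is that you explicitly verify $s_0(k,v,w)\ge 2u$, so that $\theta\in(0,1)$, which the paper leaves implicit.
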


\begin{proof} The desired conclusion is immediate from Theorem \ref{theorem9.1} in circumstances
 where $s\ge 2k^2-2$, on making use of the trivial estimate $|g_k(\alp;X)|\le X$. We suppose therefore
 that $s_1(k)<s < 2k^2-2$. Let $v$ and 
$w$ be integers with $1\le w\le k-1$, $v\ge 1$ and $2v+w^2-w<2k^2-2$, for which
 $s_1(k)=s_0(k,v,w)$. Then by H\"older's inequality, one has
$$\int_\grm|g(\alp)|^s\d\alp\le \Bigl( \int_\grm |g(\alp)|^{2k^2-2}\d\alp \Bigr)^a
\Bigl( \int_0^1|g(\alp)|^{2v+w^2-w}\d\alp \Bigr)^{1-a},$$
where
$$a=\frac{s-(2v+w^2-w)}{2k^2-2-(2v+w^2-w)}.$$

\par By applying Theorem \ref{theorem9.1} and Theorem \ref{theorem8.5} in sequence, one finds that
\begin{align}
\int_\grm|g(\alp)|^s\d\alp &\ll X^\eps \(X^{2k^2-k-3}\)^a
\(X^{2v+w^2-w-k+\Delta^*_v/w}\)^{1-a}\notag \\
&= X^{s-k-a+(1-a)\Delta^*_v/w+\eps}.\label{9.1}
\end{align}
Since we may suppose that
$$s>s_0(k,v,w)=\frac{(2k^2-2)\Delta^*_v+w(2v+w^2-w)}{w+\Delta^*_v},$$
we see that $a>(1-a)\Delta^*_v/w$, and the conclusion of the theorem follows at once from (\ref{9.1}).
\end{proof}

We now recall some notation associated with the asymptotic formula in Waring's problem. When $s$ and
 $k$ are natural numbers, let $R_{s,k}(n)$ denote the number of representations of the natural number
 $n$ as the sum of $s$ $k$th powers of positive integers. A formal application of the circle method
 suggests that for $k\ge 3$ and $s\ge k+1$, one should have
\begin{equation}\label{9.2}
R_{s,k}(n)=\frac{\Gam(1+1/k)^s}{\Gam(s/k)}\grS_{s,k}(n)n^{s/k-1}+o(n^{s/k-1}),
\end{equation}
where
$$\grS_{s,k}(n)=\sum_{q=1}^\infty \sum^q_{\substack{a=1\\ (a,q)=1}}\( q^{-1}\sum_{r=1}^q
 \er(ar^k/q)\)^s \er(-na/q).$$
Subject to suitable congruence conditions, one has $1\ll \grS_{s,k}(n)\ll n^\eps$, so that the conjectured
 relation (\ref{9.2}) represents an honest asymptotic formula. Let $\Gtil(k)$ denote the least integer $t$
 with the property that, for all $s\ge t$, and all sufficiently large natural numbers $n$, one has the
 asymptotic formula (\ref{9.2}).\par

The argument following the proof of \cite[Lemma 3.1]{Woo2012b} may be adapted in the present
 circumstances to show that $\Gtil(k)\le [s_1(k)]+1$ for $k\ge 3$. For each natural number 
$m\le \frac{1}{2}k$, we find from Theorem \ref{theorem1.2} that when $v=(k-m)^2+(k-m)$, one has
$$\eta(v,k)-1\le m^2-1\quad \text{and}\quad \eta(v,k-1)\le (m-1)^2,$$
so that
\begin{equation}\label{9.3}
\Del_v^* \le m^2-1\quad \text{for}\quad v=(k-m)^2+(k-m).
\end{equation}
Similarly, again from Theorem \ref{theorem1.2}, for each natural number $m\le \tfrac{1}{2}(k-1)$, we find 
that when $v=(k-m)^2-1$, one has
$$\eta(v,k)-1\le m^2+m-1+\frac{m}{k-m-1}$$
and
$$\eta(v,k-1)\le (m-1)^2+(m-1)+\frac{m-1}{k-m},$$
so that
\begin{equation}\label{9.4}
\Del_v^* \le m^2+m-1+\frac{m}{k-m-1}\quad \text{for}\quad v=(k-m)^2-1.
\end{equation}
Employing these exponents (\ref{9.3}) and (\ref{9.4}) in order to obtain upper bounds for $s_1(k)$, we
 obtain the upper bounds for $\Gtil(k)$ recorded in the following corollary.

\begin{corollary}\label{corollary9.3}
One has
$$\Gtil(12)\le 253,\quad \Gtil(13)\le 299,\quad \Gtil(14)\le 349,\quad \Gtil(15)\le 403,\quad 
\Gtil(16)\le 460,$$
$$\Gtil(17)\le 521,\quad \Gtil(18)\le 587,\quad \Gtil(19)\le 656,\quad \Gtil(20)\le 729.$$
\end{corollary}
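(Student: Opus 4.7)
The plan is to invoke the inequality $\Gtil(k)\le \lfloor s_1(k)\rfloor +1$, which as the authors note follows by adapting the argument after \cite[Lemma 3.1]{Woo2012b}, and then for each $k\in\{12,\ldots,20\}$ to minimise
$$s_0(k,v,w)=2k^2-2-\frac{2k^2-2-(2v+w^2-w)}{1+\Del_v^*/w}$$
over admissible pairs $(v,w)$ with $2v+w^2-w<2k^2-2$, using the concrete upper bounds for $\Del_v^*$ supplied by (\ref{9.3}) and (\ref{9.4}). The computation is purely numerical: for each $k$ we compute $s_1(k)$ by scanning the finite list of candidates provided by Theorem \ref{theorem1.2}, then round up.

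Concretely, I would prepare two tables of candidate exponents. The first, obtained from (\ref{9.3}), gives pairs
$$v=(k-m)^2+(k-m),\qquad \Del_v^*\le m^2-1\qquad (1\le m\le k/2).$$
The second, obtained from (\ref{9.4}), gives pairs
$$v=(k-m)^2-1,\qquad \Del_v^*\le m^2+m-1+\frac{m}{k-m-1}\qquad (1\le m\le (k-1)/2).$$
For each fixed $k$ and each such $v$, I would sweep $w$ over the integers in $\{1,2,\ldots,k-1\}$ satisfying $2v+w^2-w<2k^2-2$, compute $s_0(k,v,w)$, and keep track of the running minimum. The minimum is $s_1(k)$, and then the corollary reads $\Gtil(k)\le \lfloor s_1(k)\rfloor +1$.

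A brief qualitative remark on where the minimum falls: since the fraction in the definition of $s_0$ is maximised when $\Del_v^*/w$ is small while the numerator $2k^2-2-(2v+w^2-w)$ remains large, the optimum balances two competing pressures. Pushing $m$ upward shrinks $v$ (helping the numerator) but inflates $\Del_v^*$; conversely, enlarging $w$ suppresses the nuisance $\Del_v^*/w$ but costs a $w^2-w$ term in the numerator. Heuristically $\Del_v^*\approx m^2$ and $v\approx (k-m)^2$, so choosing $m\asymp \sqrt k$ and $w\asymp m$ is nearly optimal, and the resulting $s_1(k)$ lies noticeably below the trivial level $2k^2-2$ afforded by Theorem \ref{theorem9.1} alone.

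The only real obstacle is bookkeeping: one must check that for each listed bound, the more favourable estimate of Theorem \ref{theorem1.2} is used (sometimes (\ref{9.3}) wins, sometimes (\ref{9.4})), and that the admissibility constraint $2v+w^2-w<2k^2-2$ is not violated at the optimum. Since the ranges of $m$ and $w$ are finite and small, this is entirely mechanical and yields the nine numerical bounds on $\Gtil(k)$ recorded in the statement.
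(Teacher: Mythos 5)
Your proposal matches the paper's argument: the paper likewise invokes $\Gtil(k)\le [s_1(k)]+1$ and then simply records the numerically optimal parameter choices (noting that \eqref{9.4} is the estimate that wins in every case, with $m=2$ for $k=12$, $m=3$ for $13\le k\le 20$, and $w\in\{5,6,7\}$ according to $k$), exactly the outcome your finite scan would produce. The only point worth being careful about in your exposition is that what the scan directly yields is an upper bound for $s_1(k)$ rather than $s_1(k)$ itself (since $\Del_v^*$ is only bounded from above via \eqref{9.3}--\eqref{9.4}); this is harmless because the floor function is monotone, so the conclusion $\Gtil(k)\le \lfloor\,\overline{s}\,\rfloor+1$ follows for any upper bound $\overline{s}\ge s_1(k)$.
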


We note that in each of these bounds, it is (\ref{9.4}) which is utilised within the formula for $s_1(k)$.
 One takes $m=2$ for $k=12$, and $m=3$ for $13\le k\le 20$. Meanwhile, one takes $w=5$ for $k=12$,
 $w=6$ for $k=13,14$, and $w=7$ for $15\le k\le 20$.\par

For comparison, the bounds for $\Gtil(k)$ made available in \cite[Corollary 1.7]{Woo2013} show that
$$\Gtil(12)\le 255,\quad \Gtil(13)\le 303,\quad \Gtil(14)\le 354,\quad \Gtil(15)\le 410,\quad 
\Gtil(16)\le 470,$$
$$\Gtil(17)\le 534,\quad \Gtil(18)\le 602,\quad \Gtil(19)\le 674,\quad \Gtil(20)\le 748.$$
For $k\le 11$, the bounds for $\Gtil(k)$ in \cite[Corollary 1.7]{Woo2013} prove superior to those that
 follow from the work of this paper. For large values of $k$, meanwhile, the conclusion of 
\cite[Corollary 1.6]{Woo2013} shows that
$$\Gtil(k)\le 2k^2-k^{4/3}+O(k).$$
We are able to provide a modest improvement in this bound as a consequence of 
Theorem \ref{theorem9.2}.

\begin{corollary}\label{corollary9.4}
When $k$ is a large natural number, one has
$$\Gtil(k)\le 2k^2-2^{2/3}k^{4/3}+O(k).$$
\end{corollary}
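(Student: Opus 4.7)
The plan is to apply the inequality $\Gtil(k)\le \lfloor s_1(k)\rfloor +1$, which follows from Theorem~\ref{theorem9.2} by the argument rehearsed in the proof of Corollary~\ref{corollary9.3}, and then to exhibit admissible parameters $v$ and $w$ witnessing the bound $s_1(k)\le 2k^2-2^{2/3}k^{4/3}+O(k)$. Take $v=(k-m)^2-1$ with $m$ a positive integer of order $k^{1/3}$; the estimate \eqref{9.4} then gives $\Del^*_v\le m^2+m-1+m/(k-m-1)=m^2+O(1)$ under the assumption $m=o(k)$. A direct calculation yields
\[
2k^2-2-(2v+w^2-w)=4km-2m^2-w^2+w.
\]

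Next, I would introduce an auxiliary parameter $c>0$ and set $w=\lfloor cm^2\rfloor$, so that $1+\Del^*_v/w=(c+1)/c+O(m^{-2}+m/w)$. Substituting into the definition of $s_0(k,v,w)$ and grouping all lower-order contributions into a single $O(k)$ error (to be justified), I would arrive at
\[
s_0(k,v,w)=2k^2-\frac{4ckm-c^3m^4}{c+1}+O(k).
\]
For fixed $c$, differentiation in $m$ gives the critical point $m^3=k/c^2$, i.e.\ $m=c^{-2/3}k^{1/3}$; substituting, the numerator becomes $3c^{1/3}k^{4/3}$, so that $s_0=2k^2-3c^{1/3}k^{4/3}/(c+1)+O(k)$.

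It remains to optimize $\phi(c):=c^{1/3}/(c+1)$ over $c>0$. Its derivative $\phi'(c)=\tfrac13 c^{-2/3}(1-2c)/(c+1)^2$ has a unique positive root at $c=1/2$, where $\phi(1/2)=2^{-1/3}/(3/2)=2/(3\cdot 2^{1/3})$, so that $3\phi(1/2)=2\cdot 2^{-1/3}=2^{2/3}$. Choosing $c=1/2$ therefore yields $s_0(k,v,w)=2k^2-2^{2/3}k^{4/3}+O(k)$ with the near-optimal integer values $m\approx 2^{2/3}k^{1/3}$ and $w\approx 2^{1/3}k^{2/3}$; the corollary follows upon invoking $\Gtil(k)\le s_1(k)+1$.

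The main obstacle will be marshalling the error terms. One must verify that integer rounding of $m$ and $w$, the $O(1)$ discrepancy in the bound \eqref{9.4} for $\Del^*_v$, and the discarded quantities $-2m^2$ and $+w$ in the numerator together contribute at most $O(k)$ after division by $(c+1)/c=O(1)$. Individually each is mild---rounding $m$ by $\pm 1$ changes $4km$ by $O(k)$, rounding $w$ by $\pm 1$ changes $w^2$ by $O(w)=O(k^{2/3})$, and $2m^2=O(k^{2/3})$---but their propagation through the quotient $N/D$ must be checked carefully enough that nothing of size exceeding $O(k)$ survives alongside the main term $-2^{2/3}k^{4/3}$.
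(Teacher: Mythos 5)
Your proof is correct and in substance the same as the paper's: both reduce to the bound $\Gtil(k)\le [s_1(k)]+1$ and then exhibit parameters $m\approx 2^{2/3}k^{1/3}$, $w\approx 2^{1/3}k^{2/3}$ making $s_0(k,v,w)=2k^2-2^{2/3}k^{4/3}+O(k)$. The one divergence is cosmetic: you take $v=(k-m)^2-1$ and invoke \eqref{9.4}, whereas the paper takes $v=(k-m)^2+(k-m)$ and invokes \eqref{9.3}; the resulting bounds $\Del_v^*\le m^2+O(m)$ and $\Del_v^*\le m^2-1$ differ only by $O(m)=O(k^{1/3})$, which, after division by $w\asymp k^{2/3}$ and multiplication against a numerator of size $\asymp k^{4/3}$, perturbs $s_0$ by $O(k)$, so both choices land on the same asymptotic. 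What your write-up adds over the paper is the optimization: the paper simply plugs in $m=[2^{2/3}k^{1/3}]$, $w=[2^{1/3}k^{2/3}]$ and verifies the arithmetic, while you parametrize $w=\lfloor cm^2\rfloor$, extremize in $m$ to get $m=c^{-2/3}k^{1/3}$ and a main term $3c^{1/3}k^{4/3}/(c+1)$, and then maximize $\phi(c)=c^{1/3}/(c+1)$ at $c=1/2$, recovering those choices and explaining why the exponent $2^{2/3}$ appears; the error-term bookkeeping you flag at the end (rounding $m,w$, the $O(m)$ slack in $\Del_v^*$, the dropped $-2m^2$ and $+w$) does indeed close with room to spare since each contributes at most $O(k)$.
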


\begin{proof}As we have already noted, one has $\Gtil(k)\le [s_1(k)]+1$, and so it suffices to bound
 $s_1(k)$ for large values of $k$. We take
$$m=[2^{2/3}k^{1/3}],\quad v=(k-m)^2+(k-m)\quad \text{and}\quad w=[2^{1/3}k^{2/3}],$$
 so that from (\ref{9.3}) one obtains
\begin{align*}
s_0(k,v,w)&\le 2k^2-2-\frac{2k^2-2-2(k^2-2mk)-w^2+O(k)}{1+m^2/w+O(k^{-2/3})}\\
&=2k^2-2-\frac{4(2^{2/3}k^{1/3})k-2^{2/3}k^{4/3}+O(k)}{3+O(k^{-1/3})}\\
&=2k^2-2^{2/3}k^{4/3}+O(k).
\end{align*}
This confirms the conclusion of the corollary.
\end{proof}

\bibliographystyle{amsbracket}
\providecommand{\bysame}{\leavevmode\hbox to3em{\hrulefill}\thinspace}

\end{document}